\documentclass{artv3}
\usepackage{graphicx}
\usepackage[all]{xy}
\usepackage{fullpage}
\usepackage{amsmath,amsthm}
\usepackage{amssymb,amsfonts,amsbsy}
\usepackage{dsfont,mathrsfs}
\usepackage{eufrak}
\usepackage{color}
\usepackage[utf8]{inputenc}
\usepackage[T1]{fontenc}
\usepackage{enumitem}

\usepackage{cleveref}

\title{Exhaustion of $\cg{N}$ via rigid expansions}
\firstauthor{Jesús Hernández Hernández}
\date{}
\email{jhdez@matmor.unam.mx	}
\address{Centro de Ciencias Matemáticas, UNAM\newline
\indent Universidad Nacional Autónoma de México\newline
\indent Morelia, Mich. 58190\newline
\indent Mexico}
\urladdr{https://sites.google.com/site/jhdezhdez/}
\secondauthor{Cristhian E. Hidber}
\secondemail{hidber@matmor.unam.mx	}
\secondaddress{Centro de Ciencias Matemáticas, UNAM\newline
\indent Universidad Nacional Autónoma de México\newline
\indent Morelia, Mich. 58190\newline
\indent Mexico}
\secondurladdr{https://sites.google.com/view/cristhian-e-hidber}
\newtheorem{theorem}{Theorem}[section]
\newtheorem{lemma}[theorem]{Lemma}

\newtheorem{corollary}[theorem]{Corollary}

\newtheorem*{corollaryno}{Corollary}

\newtheorem{Theo}{Theorem}

\newtheorem{Coro}[Theo]{Corollary}

\newcommand*{\cg}[1]{\mathcal{C}(#1)}
\newcommand*{\mcg}[1]{\mathrm{Mod}(#1)}

\newcommand*{\adj}[2]{\mathrm{adj}_{#1}(#2)}
\newcommand*{\veps}{\varepsilon}
\newcommand*{\vkap}{\varkappa}
\newcommand*{\dt}[1]{\tau_{#1}}

\newcommand{\A}{\mathcal{A}}
\newcommand{\E}{\mathcal{E}}
\newcommand{\M}{\mathcal{M}}
\newcommand{\G}{\mathcal{G}}

\newcommand*{\Y}{\mathfrak{Y}}
\newcommand*{\X}{\mathfrak{X}}

\newcommand{\N}{\ensuremath{\mathbb{N}}}
\newcommand{\Z}{\ensuremath{\mathbb{Z}}}

\begin{document}
\maketitle
\begin{abstract}
 Let $N$ be a connected closed non-orientable surface of genus at least 6. In this work we prove that there exists a finite subgraph $\X$ (Irmak's finite rigid set \cite{Irmak2019}) such that any graph endomorphism $\varphi$ of $\cg{N}$ whose restriction to $\X$ is (locally) injective, $\varphi$ is induced by a homeomorphism of $N$. To prove this, we first prove that $\X$ and its rigid expansions exhaust $\cg{N}$.\\[0.5cm]
 \textbf{Keywords:} Non-orientable surface; Mapping class groups; Curve graph, Rigid expansions.\\[0.5cm]
 \textbf{MSC 2020:} 57K20; 20F65.
\end{abstract}
\section*{Introduction}

Let $N$ be a connected closed non-orientable surface of genus at least 6. In recent years, one of the most studied groups regarding $N$ is its \emph{mapping class group}, defined as the group of isotopy classes of self-homeomorphisms of $N$. In the orientable case this group has been studied extensively (see \cite{FarbMargalit} for a starting point), and in particular the rigidity phenomena present in this group is quite significant. To this end, one of the main tools of study is the \emph{curve graph of the surface}: given a surface, its curve graph is the simplicial graph whose vertices are the isotopy classes of essential simple closed curves and the edges are given by disjointness (see Subsection \ref{subsec:curvegraph} for more details). This combinatorial object was introduced by Harvey \cite{Harvey81} as an analogue in the context of the mapping class group and the Teichmüller space, for the Tits buildings in the study of arithmetic groups and homogeneous spaces.

Later on, Ivanov \cite{Ivanov} used the curve graph of an orientable surface to prove an analogue theorem to Margulis superrigidity, where a key result he proves is that (under the correct hypotheses) every automorphism of the curve graph is given by a homeomorphism of the surface; this result was later extended to the low-genus surfaces independently by Korkmaz \cite{Korkmaz} and Luo \cite{Luo}, and to non-orientable surfaces by Atalan-Korkmaz \cite{Atalan-Korkmaz}, Szepietowski \cite{Szepietowski20} and Stukow-Szepietowski \cite{SS25}.

Using these results as inspiration, there were several generalizations whose main ideas were proving algebraic rigidity results of the mapping class group via proving first combinatorial rigidity results of the curve graph. Examples of these are the works by Irmak \cite{Irmak2004} \cite{Irmak2012} \cite{Irmak2020}, by Behrstock-Margalit \cite{BM06}, by Shackleton \cite{Shackleton2007}, by Aramayona-Leininger \cite{AL13} \cite{AL16} and by the first author \cite{JHH1} \cite{JHH2} \cite{JHH3}. Of particular interest is the concept of \emph{rigid sets} from Aramayona-Leininger \cite{AL13} : a set $X$ of vertices of the curve graph of a surface is called \emph{rigid} if any locally injective map from $X$ to the curve graph is induced by the restriction to $X$ of a homeomorphism of the surface. This definition was extended by the first author \cite{JHH2} to the more general setting of simplicial graphs (see Subsection \ref{subsec:rigidexpansions}).

In this work, we prove a combinatorial rigidity result analogue to Theorem C by the first author \cite{JHH3}, but in the context of non-orientable surfaces.

\begin{Theo}\label{thm:combrigid}
    Let $N$ be a connected closed non-orientable surface of genus $g \geq 6$, and let $\varphi: \cg{N} \to \cg{N}$ be a graph morphism. If $\varphi|_{\X}$ is (locally) injective, then $\varphi$ is induced by a homeomorphism
\end{Theo}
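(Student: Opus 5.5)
The plan is the standard "rigid set plus rigid expansions" strategy from \cite{JHH3}, adapted to $N$. Recall that for a subgraph $Y$ of $\cg{N}$, a vertex $\alpha$ is \emph{uniquely determined} by a set $B \subset Y$ if $\alpha$ is the only vertex of $\cg{N}$ adjacent to every element of $B$ and to no other vertex of some witnessing configuration. More precisely, $\{\alpha\} = \bigcap_{\beta\in B}\mathrm{lk}(\beta)$. The first rigid expansion $Y^1$ is $Y$ together with all such $\alpha$, and $Y^{k+1}=(Y^k)^1$. Set $\X^0=\X$, Irmak's finite rigid set \cite{Irmak2019}.

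\textbf{Step 1: exhaustion.} First I prove that $\bigcup_k \X^k=\cg{N}$, which is the content announced in the abstract. I would do this in two parts.
\begin{itemize}
\item Show that $\bigcup_k\X^k$ contains the Dehn twist images $\dt{c}^{\pm1}(\X)$ for the two-sided curves $c$ in a finite generating set of $\mcg{N}$ (e.g.\ Lickorish/Chillingworth generators, plus a crosscap slide or $Y$-homeomorphism), all chosen among the curves of $\X$. Concretely, for each $\gamma\in\X$ I exhibit $\dt{c}(\gamma)$ as the unique curve disjoint from a suitable multicurve of curves already obtained, i.e.\ the unique curve in a complementary annulus, pair of pants, or one-holed Möbius band/Klein-bottle piece. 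This is done via explicit pictures, first for curves meeting $c$ once, then inductively.
\item Use the observation that $(h\,Y)^k = h(Y^k)$. If $h(\X)\subset\X^m$ for each generator $h$, then $\X^j\subset \X^{j+m}$ and $h(\X^j)\subset\X^{j+m}$. Inductively, $\bigcup_k\X^k$ is $\mcg{N}$-invariant. Since there are finitely many $\mcg{N}$-orbits of vertices (topological types) and $\X$ meets each orbit, this gives exhaustion.
\end{itemize}

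\textbf{Step 2: propagation of the homeomorphism.} Since $\varphi|_\X$ is locally injective, Irmak's theorem gives a homeomorphism $h$ with $h|_\X=\varphi|_\X$. Replacing $\varphi$ by $h^{-1}\circ\varphi$, I may assume $\varphi|_\X=\mathrm{id}$. Now suppose $\varphi|_{\X^k}=\mathrm{id}$ and let $\alpha\in\X^{k+1}$ be uniquely determined by $B\subset\X^k$. Since $\varphi$ is a graph morphism, it preserves adjacency, so $\varphi(\alpha)$ is adjacent to every $\varphi(\beta)=\beta$ with $\beta\in B$. By uniqueness, $\varphi(\alpha)=\alpha$, provided the definition forces $\varphi(\alpha)\neq\beta$. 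Graph morphisms of simplicial graphs have no loops, so $\varphi(\alpha)$ cannot equal a neighbour's image, and the claim follows. By induction and Step 1, $\varphi=\mathrm{id}$ on $\cg{N}$, hence the original $\varphi$ is induced by $h$.

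\textbf{Main obstacle.} The main difficulty is the combinatorial heart of Step 1. I must choose, for each generator and each curve of Irmak's set, configurations in which the twisted curve is uniquely determined. Non-orientability makes this delicate: curves disjoint from a multicurve can live in Möbius bands or in crosscap pieces with several one-sided curves. So the witnessing multicurves must cut off pieces whose curve graph link is a single point, namely annuli with one essential curve or Möbius bands with their core. I would first try to handle the twists along two-sided curves with a chain of curves from $\X$ as in \cite{JHH3}. For the crosscap slide I would then show its action on $\X$ is realised within finitely many expansions, by expressing its images as unique cores of one-holed Möbius bands bounded by curves already obtained. The genus hypothesis $g\ge 6$ ensures enough room for these configurations.
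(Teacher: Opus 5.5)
Your architecture matches the paper's: a symmetric generating set moving the seed into a bounded expansion, $\mcg{N}$-invariance of $\bigcup_k\X^k$, then orbit representatives. Step 2 is correct; it is exactly the mechanism behind Corollary C of \cite{JHH2}, and taking $h$ directly from Irmak's rigidity of $\X$ gives the homeomorphism without a separate appeal to the automorphism theorem for $\cg{N}$.

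The genuine gap is the last sentence of Step 1: $\X$ does \emph{not} meet every $\mcg{N}$-orbit of vertices. Every curve of $\X$ is one of two kinds:
\begin{enumerate}
\item a one-sided curve through a single crosscap ($\alpha_i$, $\alpha_{i,j}$);
\item a curve made only of arcs joining $e$-sides ($\veps_{i,j}$, $\nu^{\pm}_{i,i+2}$). Such a curve lies in the planar surface $P$ and separates $N$ into two non-orientable pieces.
\end{enumerate}
Thus $\X$ contains no two-sided non-separating curve at all. It also contains no one-sided curve with orientable complement (odd $g$), and no separating curve with an orientable side (e.g.\ one bounding a one-holed torus). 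Invariance of $\bigcup_k\X^k$ says nothing about these orbits until you place a representative of each in some finite expansion.

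The paper does precisely this. It shows $\gamma_i\in V(\Y^{5})$ (Lemma \ref{lemma:gammai}, which already needs the auxiliary curve $\mu^{2}_i\in V(\Y^{4})$) and $\eta=\langle\{\gamma_1,\dots,\gamma_g\}\rangle\in V(\Y^{6})$. Together with $\alpha_{1,2}$, these represent every non-separating orbit for either parity of $g$. Every separating curve is then $\langle C\rangle$ for a finite set $C$ of non-separating curves filling both of its sides. For the same reason, the twist curves of a Lickorish-type generating set (such as $\gamma_i$ and $\beta$) cannot be chosen among the curves of $\X$, as you proposed.

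Beyond this, the part that carries essentially all of the work is only described in your proposal, not produced. That part is: for every generator $h$ and every seed vertex $v$, an explicit set $A$ inside a fixed finite expansion with $h^{\pm1}(v)=\langle A\rangle$. The paper makes this tractable in three ways:
\begin{enumerate}
\item it uses the much smaller seed $\Y=\A\cup\M\subset\X$ ($2g+1$ curves);
\item it uses Szepietowski's generators $\dt{\gamma_i}$, $\dt{\beta}$, $y$;
\item it builds a long chain of auxiliary uniquely determined curves ($\veps_{i,j}$, $\mu^{1}_i$, $\mu^{2}_j$, $\nu^{\pm}_{i,j}$, $\vkap^{1}_i$, $\kappa^{\pm}_{i,i+3}$, $\theta_{i,j}$, $\sigma^{\pm}_{i,j}$, $\zeta^{\pm}_{i,j}$, \dots) to reach $h^{\pm1}(\Y)\subset\Y^{13}$.
\end{enumerate}
Exhaustion by $\X$ then follows from $\Y^{k}\subseteq\X^{k}$. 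Until such configurations (or an equivalent argument) are supplied, Step 1, and hence the theorem, remains a strategy rather than a proof.
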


To prove this result, we use rigid expansions and seed graphs: Given a simplicial connected graph $\Gamma$, a vertex $v$ and a non-empty set of vertices $A$, we say that \emph{$A$ uniquely determines $v$} if $v$ is the only vertex that is not in $A$ but it spans an edge with every element of $A$, and we denote it by $v = \langle A \rangle$. Given an induced subgraph $\Lambda$ of $\Gamma$, its first \emph{rigid expansion} is the induced subgraph whose vertex set is $V(\Lambda) \cup \{v \in V(\Gamma) : (\exists A \subset V(\Lambda)) \hspace{3mm} v = \langle A \rangle\}$. Then we can inductively define the $n$-th rigid expansion, denoted by $\Lambda^{n}$, and we define the $\omega$-th rigid expansion as the induced subgraph whose vertex set is $\bigcup_{n < \omega} \Lambda^{n}$. Finally, $\Lambda$ is a \emph{seed graph} if every vertex of $\Gamma$ is a vertex of some rigid expansion of $\Lambda$. See Subsection \ref{subsec:rigidexpansions} for details in a more general setting which applies to any countable ordinal.

To prove Theorem \ref{thm:combrigid}, we first prove that there exists a finite seed subgraph $\Y$ of $\cg{N}$.

\begin{Theo}\label{thm:exhaustionY}
 Let $N$ be a connected closed non-orientable surface of genus $g \geq 6$. There exists a finite seed subgraph $\Y$ of $\cg{N}$.
\end{Theo}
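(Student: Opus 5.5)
The plan is to follow the strategy used for orientable surfaces in \cite{JHH3}. We take $\Y$ to be $\X$ (Irmak's finite rigid set \cite{Irmak2019}) enlarged by finitely many curves that lie in rigid expansions of $\X$, and then show that the rigid expansions of $\Y$ eventually reach every vertex of $\cg{N}$.

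\textbf{Step 1: reach the images of the Dehn twist generators.} We fix a model of $N$ as a sphere with $g$ crosscaps. We take a finite generating set of $\mcg{N}$: the Dehn twists $\dt{a_i}$ about the two-sided curves of a Lickorish/Chillingworth chain, together with one crosscap slide (or $Y$-homeomorphism). We choose $\Y$ to contain $\X$ together with these curves and the curves obtained by applying each generator and its inverse to the curves of $\X$. This set is finite. The step consists of checking, curve by curve, that each added curve is uniquely determined: it should be $\langle A\rangle$ for some $A$ drawn from $\X$ or from an earlier expansion. Concretely, $A$ is a set of pairwise disjoint curves whose complement consists of pairs of pants and one-holed M\"obius bands, except for one complementary component that is a four-holed sphere, a one-holed torus, or a two-holed projective plane/Klein bottle. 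In that component the target curve is the unique essential curve disjoint from the others. This forces us to work through the "Farey-type" subsurfaces of non-orientable type.

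\textbf{Step 2: bootstrap to the whole orbit.} For a vertex $v$ we have $\langle A\rangle = v$ if and only if $\langle h(A)\rangle = h(v)$ for any homeomorphism $h$. Rigid expansion therefore commutes with homeomorphisms: $h(\Lambda^n) = h(\Lambda)^n$. Step 1 gives $h^{\pm1}(\Y) \subset \Y^{k}$ for every generator $h$, for some fixed $k$. By induction on word length, $f(\Y) \subset \Y^{\omega}$ for every $f\in\mcg{N}$. The induction runs as follows: if $f(\Y)\subset \Y^{m}$, then $hf(\Y) \subset h(\Y^{m}) = h(\Y)^{m} \subset (\Y^{k})^{m}$.

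\textbf{Step 3: conclude.} The vertices of $\cg{N}$ fall into finitely many $\mcg{N}$-orbits, determined by the topological type of the curve: one-sided or two-sided, orientability and genus of the complement, separating or not. It then suffices to check that $\Y$ (or a finite expansion of it) contains a representative of each orbit. Irmak's set already contains curves of every type, or they can be added through Step 1's uniquely determined configurations. Then $\Y^{\omega}=\cg{N}$.

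I expect the main obstacle to be Step 1, the explicit check that the Dehn twist images and the crosscap slide images of the curves in $\X$ are uniquely determined. For each such curve one must exhibit a concrete set $A$ in $\X$ or in its expansions and verify uniqueness in the complementary subsurface. The crosscap slide (and one-sided curves generally) is the delicate part, since one-sided curves have M\"obius band neighbourhoods and the relevant "determining" configurations differ from the orientable case. The genus bound $g\ge 6$ should give enough room for these configurations. My first approach would be to mimic Irmak's own method from \cite{Irmak2019}: express each target curve via a chain of intermediate curves, each $\langle A\rangle$ for pants-type decompositions built from $\X$, performing the twist "half a step at a time."
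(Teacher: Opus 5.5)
\textbf{Genuine gap in Step 1.} Your Steps 2 and 3 follow the same architecture as the paper (Lemma \ref{lemma:groupYinY} and Theorem \ref{thm:seedgraph}). Step 1, however, carries the whole content of the theorem, and the mechanism you propose for it cannot work.

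If $A$ is a set of pairwise disjoint curves, then $\bigcap_{w\in A}\adj{\cg{N}}{w}$ consists of \emph{all} essential curves in $N\setminus A$ that are not isotopic to components of $A$. Each piece you allow has too many such curves:
\begin{itemize}
\item a four-holed sphere, a one-holed torus or a two-holed Klein bottle contains infinitely many;
\item a two-holed projective plane, which is what a one-holed M\"obius band is, contains two one-sided curves.
\end{itemize}
So neither the component where you place the target nor the pieces you intend to be inert yield a unique common neighbour.

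A determining set must contain curves that cross one another and cross the region where the target lives. This is how the paper's determinations are built. Already in Lemmas \ref{lemma:vepsijv2} and \ref{lemma:alphai}, $\veps_{i,j}$ and $\alpha_i$ are cut out by families of one-sided curves $\alpha_{k,k+1}$, $\alpha_{k,k-2}$ that intersect each other; for example, $\alpha_{i+1,i+2}$ and $\alpha_{i+2,i}$ cross.

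The images of $\Y$ under $\dt{\gamma_i}^{\pm 1}$, $y^{\pm 1}$ and $\dt{\beta}^{\pm 1}$ are reached only after two further ingredients:
\begin{itemize}
\item a hierarchy of auxiliary crossing curves ($\mu^{1}_{i}$, $\mu^{2}_{i}$, $\gamma'_i$, $\eta$, $\kappa^{\pm}_{i,i+3}$, $\theta_{i,j}$, $\sigma^{\pm}_{i,j}$, $\zeta^{\pm}_{i,j}$, and others) built up to $\Y^{13}$;
\item the identity $h(\langle A\rangle)=\langle h(A)\rangle$, used so that $h$ fixes the curves of $A$ disjoint from its support, as in Lemma \ref{lemma:dtgammainujkp1}. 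This avoids having to draw the twisted curves directly.
\end{itemize}
Your proposal supplies neither these configurations nor a workable recipe for finding them. As written, Step 1 only restates what has to be proved.

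\textbf{Smaller point in Step 3.} Irmak's set does not contain curves of every type. Its two-sided curves $\veps_{i,j}$ and $\nu^{\pm}_{i,i+2}$ lie in the planar surface $P$, so they separate $N$. Each of its one-sided curves passes through a single crosscap. Hence $\X$ has no two-sided non-separating curve and, for $g$ odd, no one-sided curve with orientable complement.

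The paper obtains these representatives, $\gamma_1\in V(\Y^{5})$ and $\eta\in V(\Y^{6})$, by further unique determinations. It avoids enumerating the orbits of separating curves altogether: each separating curve is uniquely determined by finitely many non-separating curves that fill both of its complementary components.
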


Using this, in Section \ref{sec:thmexhaustion} we prove that Irmak's finite rigid set $\X$ is also a seed subgraph.

\begin{Coro}\label{cor:exhaustionX}
 Let $N$ be a connected closed non-orientable surface of genus $g \geq 6$. There exists a finite rigid seed subgraph $\X$ of $\cg{N}$.
\end{Coro}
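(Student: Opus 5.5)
Corollary \ref{cor:exhaustionX} follows from Theorem \ref{thm:exhaustionY} by showing that the finite seed graph $\Y$ lies inside some finite rigid expansion $\X^{n}$ of Irmak's finite rigid set $\X$. Rigidity of $\X$ is Irmak's theorem \cite{Irmak2019}, so only the seed property needs proof. Rigid expansions are monotone: if $\Lambda \subseteq \Lambda'$ as induced subgraphs, then $\Lambda^{k} \subseteq (\Lambda')^{k}$ for every $k$, since any $A \subset V(\Lambda^{k-1})$ with $v=\langle A\rangle$ is also a subset of $V((\Lambda')^{k-1})$. The uniqueness condition refers only to the ambient graph $\cg{N}$ and $A$, so it is unchanged. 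Also $(\Lambda^{n})^{m} = \Lambda^{n+m}$. So if $\Y \subseteq \X^{n}$, then every vertex of $\cg{N}$ lies in some $\Y^{m} \subseteq \X^{n+m}$, and $\X$ is a seed graph.

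The construction of $\Y$ in Section~\ref{sec:thmexhaustion} can be chosen to make this containment easy to check. I expect $\Y$ to consist of a small family of curves: the chain-type two-sided curves $\alpha_i$ and one-sided curves $\beta_j$, built from a standard model of $N$ as a sphere with crosscaps, or as an orientable surface with one or two crosscaps. Irmak's set $\X$ is built from curves of the same standard types in the same model. I would fix one model of $N$ for both sets and list, for each curve $\gamma \in V(\Y)\setminus V(\X)$, a set $A \subset V(\X)$, or a set in an already-obtained expansion, such that $\gamma = \langle A\rangle$. The typical choice takes $A$ to be curves of $\X$ filling the complement of $\gamma$: their union is disjoint from $\gamma$, and every complementary component of $N \setminus \bigcup A$ is either a disc, a Möbius band, or an annulus/Möbius band whose core is $\gamma$. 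Then $\gamma$ is the unique essential curve outside $A$ disjoint from all of $A$. Curves such as boundaries of regions spanned by consecutive chain curves, or one-sided curves through a single crosscap, should arise this way in one or two steps.

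The main obstacle is the uniqueness verification. I need to check that the complement of $\bigcup A$ contains exactly one essential curve not already in $A$. This is subtle on non-orientable surfaces. A Möbius band complementary component contains its one-sided core, and its boundary is two-sided. A one-holed Klein bottle contains several curves, so complementary regions of that type must be avoided. A pair of pants may have boundary curves that are inessential or coincide. For each step I would cut $N$ along the curves of $A$ and compute the Euler characteristic and orientability of each piece. I would then choose $A$ so that exactly one piece is an annulus or a Möbius band carrying $\gamma$, and every other piece is a disc, a pair of pants with all boundaries in $A$, or a once-punctured Möbius band whose only essential curves lie in $A$. If a curve of $\Y$ cannot be obtained directly, I would first generate auxiliary curves, for example images of $\X$-curves under half-twists or crosscap slides realized as unique curves, and use those in a second round. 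Since $\Y$ is finite, finitely many such steps give $\Y \subseteq \X^{n}$, which proves the corollary.
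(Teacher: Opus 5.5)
Your reduction is the same as the paper's: rigid expansions are monotone under inclusion and $(\Lambda^{n})^{m}=\Lambda^{n+m}$. So $\Y\subseteq\X^{n}$, together with Theorem~\ref{thm:exhaustionY} and Irmak's rigidity of $\X$, gives the corollary.

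As written, however, you never establish that containment. You guess what $\Y$ is: it is defined in Subsection~\ref{subsec:model}, not Section~\ref{sec:thmexhaustion}, and consists of the one-sided curves $\alpha_{i,i+1}$, $\alpha_{i,i-2}$ together with the two-sided curve $\nu^{-}_{5,7}$. You then only sketch a program of uniqueness verifications and assert that it succeeds (``finitely many such steps give $\Y\subseteq\X^{n}$''). If the containment genuinely required such an argument, this would not be a proof, since nothing guarantees that each curve of $\Y$ is uniquely determined by curves of some expansion of $\X$.

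The step you call the main obstacle in fact disappears. The paper takes $n=0$, because $\Y\subseteq\X$ holds by definition. Every curve in $\A$ is some $\alpha_{i,j}$ with $j\equiv i+1$ or $j\equiv i-2 \pmod g$. For $g\geq 6$, neither value is congruent to $i$ or to $i-1$, so $\A\subseteq B_1$. Likewise $\nu^{-}_{5,7}$ is $\nu^{-}_{i,i+2}$ with $i=5$ (read modulo $g$), so $\M\subseteq B_2$. Hence $\Y^{k}\subseteq\X^{k}$ for every $k$, and $V(\cg{N})=\bigcup_{k}V(\Y^{k})\subseteq\bigcup_{k}V(\X^{k})$.

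Replace your second and third paragraphs with this one-line check. The Euler characteristic and complementary-region analysis is unnecessary here.
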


Then, Theorem \ref{thm:combrigid} follows from Corollary \ref{cor:exhaustionX} and the following result (where $Y^{\omega}$ denotes union of all finite rigid expansions of $Y$, see Subsection\ref{subsec:rigidexpansions}).

\begin{corollaryno}[C,\cite{JHH2}]
    Let $\Gamma$ be a connected simplicial graph, $Y$ be a rigid subgraph of $\Gamma$, and $\varphi: Y^{\omega} \to \Gamma$ be a graph morphism such that $\varphi|_{Y}$ is locally injective. Then $\varphi$ is the restriction to $Y^{\omega}$ of an automorphism of $\Gamma$, unique up to the pointwise stabilizer of $Y$ in $Aut(\Gamma)$.
\end{corollaryno}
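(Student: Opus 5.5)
The statement to prove is Corollary C of \cite{JHH2}. The plan is induction on the rigid expansions, showing that $\varphi$ agrees with a single automorphism at every stage.

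\textbf{Step 1: produce the automorphism.} Since $Y$ is rigid and $\varphi|_Y$ is locally injective, the definition of rigid subgraph (Subsection \ref{subsec:rigidexpansions}) gives an automorphism $h \in \mathrm{Aut}(\Gamma)$ with $h|_Y = \varphi|_Y$. This $h$ is unique up to precomposition with an element of the pointwise stabilizer of $Y$. So uniqueness is immediate once we know that every automorphism agreeing with $\varphi$ on $Y$ also agrees with it on $Y^{\omega}$.

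\textbf{Step 2: induction claim.} Set $\psi = h^{-1}\circ\varphi : Y^{\omega}\to\Gamma$. This is a graph morphism, and it is the identity on $Y$. We show by induction on $n$ that $\psi$ is the identity on $Y^n$. Suppose $\psi$ is the identity on $Y^n$, and let $v \in Y^{n+1}\setminus Y^n$. Then $v=\langle A\rangle$ for some $A\subset V(Y^n)$. Since $\psi$ preserves edges, $\psi(v)$ is adjacent to $\psi(a)=a$ for every $a\in A$. If $\psi(v)\notin A$, then unique determination forces $\psi(v)=v$.

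\textbf{Step 3: the main obstacle.} The delicate case is $\psi(v)\in A$, so $\psi(v)=a_0$ for some $a_0\in A$. Then $a_0$ is adjacent to every element of $A$ except itself, and we need a contradiction. Since the graph is simplicial there are no loops. So if $A$ has at least two elements, we still need an extra argument.

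I would handle this by working in the setting of \cite{JHH2}: check whether the definition of $v=\langle A\rangle$ there already excludes it, e.g.\ by requiring the neighbourhood condition on $A$ to single out $v$ among all vertices, including those in $A$. In our curve-graph application this holds automatically, because the elements of $A$ are pairwise adjacent curves together with $v$, and $A\cup\{v\}$ is a simplex.

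If the definition does not exclude it, I would combine the graph-morphism property with a star argument. The vertex $v$ is adjacent to $a_0$, so $\psi(v)$ would have to be adjacent to $\psi(a_0)=a_0$. That would force a loop at $a_0$, which is impossible in a simplicial graph. This rules out $\psi(v)=a_0$ and settles the case.

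\textbf{Step 4: conclude.} Hence $\psi$ fixes $Y^{n+1}$, and so it fixes $Y^{\omega}=\bigcup_n Y^n$. Therefore $\varphi=h|_{Y^{\omega}}$, and by Step 1 $h$ is unique up to the pointwise stabilizer of $Y$.
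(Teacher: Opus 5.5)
Your argument is correct and is essentially the standard proof of Corollary C from \cite{JHH2}, which this paper cites without proof: take the automorphism $h$ given by rigidity, then show by induction on $n$ that $h^{-1}\circ\varphi$ fixes $Y^{n}$ pointwise, since it must send $\langle A\rangle$ to a common neighbour of $A$. The case split in Step 3 is unnecessary, because with the definition $\{v\}=\bigcap_{w\in A}\adj{\Gamma}{w}$ and irreflexive adjacency, $\psi(v)$ lies in that intersection directly; you should also drop the remark that $A\cup\{v\}$ is a simplex in the curve-graph setting, which is false (e.g.\ the set in Lemma \ref{lemma:alphai} contains $\alpha_{i+2,i+3}$ and $\alpha_{i+2,i}$, which run through the same crosscap and intersect), although nothing in your proof depends on it.
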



\textbf{Acknowledgements:} 
 The first author was supported during the creation of this article by the research project grants UNAM-PAPIIT IA104620, IN102018 and IN114323. 
 The second author received support from a CONACYT Posdoctoral Fellowship and from UNAM-PAPIIT-IN105318.
 Both authors were supported during the creation of this article by the CONACYT Ciencia de Frontera 2019 research project grant CF2019 217392.
\section{Preliminaries}\label{sec:preliminaries}

In this section we introduce the terminology and results needed for this work in the subjects of rigid expansions in the context of simplicial graphs, the curve graph of a non-orientable surface, and the different models used for the non-orientable surfaces. Finally, we introduce the subgraph $\Y$ and some auxiliary curves that form the basis of the proofs of the following section.

\subsection{Rigid expansions}\label{subsec:rigidexpansions}

Let $\Gamma$ be a connected simplicial graph. We denote its vertex set as $V(\Gamma)$ and its edge set as $E(\Gamma)$.

If $v \in V(\Gamma)$, we denote by $\adj{\Gamma}{v}$ the set of all vertices in $\Gamma$ that are adjacent to $v$, i.e. it is the set $$\adj{\Gamma}{v}:= \{w \in V(\Gamma): \{v,w\} \in E(\Gamma)\}.$$

Let $v \in V(\Gamma)$ and $A \subset V(\Gamma)$. We say $v$ is \textit{uniquely determined by} $A$, denoted by $v = \langle A \rangle$, if it is the unique vertex in $\Gamma$ adjacent to all the vertices in $A$, i.e. $$\{v\} = \bigcap_{w \in A} \adj{\Gamma}{w}.$$

Recall that $\Lambda$ is an induced subgraph of $\Gamma$ if $V(\Lambda) \subset V(\Gamma)$ and $v,w \in V(\Lambda)$ span an edge in $\Lambda$ if and only if they span an edge in $\Gamma$. We denote that $\Lambda$ is an induced subgraph of $\Gamma$ by $\Lambda \leq \Gamma$. Hereafter, we always assume that any subgraph is an induced subgraph.

If $\Lambda \leq \Gamma$, we define its \textit{zeroth rigid expansion} as itself and denote it by $\Lambda^{0}$. Then, for any countable ordinal $\alpha$ we define the $\alpha$-th rigid expansion of $\Lambda$, denoted by $\Lambda^{\alpha}$ inductively as follows:
\begin{enumerate}
 \item If $\alpha$ is the successor of the ordinal $\beta$, then $\Lambda^{\alpha}$ is the subgraph of $\Gamma$ with vertex set: $$V(\Lambda^{\alpha}) := V(\Lambda^{\beta}) \cup \{v \in V(\Gamma) : (\exists A \subset V(\Lambda^{\beta})) \text{\hspace{3mm}} v = \langle A \rangle\}.$$
 \item If $\alpha$ is a limit ordinal, then $\Lambda^{\alpha}$ is the subgraph of $\Gamma$ with vertex set: $$V(\Lambda^{\alpha}) := \bigcup_{\beta < \alpha} V(\Lambda^{\beta}).$$
\end{enumerate}

Let $\Lambda \leq \Gamma$. We say that $\Lambda$ is a \emph{seed subgraph} if every vertex of $\Gamma$ is a vertex of some rigid expansion of $\Lambda$, i.e. $V(\Gamma) = \bigcup_{\alpha < \omega_1} V(\Lambda^{\alpha}).$

Below we describe a way to prove that a subgraph $\Lambda$ is a seed subgraph of $\Gamma$. This is based on the techniques used by the first author \cite{JHH1} for the curve graph of an orientable surface.

Let $G$ be a group acting by automorphisms on $\Gamma$, and let $\mathcal{G}$ be a symmetric generating set of $G$.

\begin{lemma}\label{lemma:gensetYinYk+1}
 Let $\Lambda \leq \Gamma$. Suppose there exists a countable ordinal $\alpha$ such that for all $g \in \mathcal{G}$ and all $v \in V(\Lambda)$ we have $g \cdot v \in V(\Lambda^{\alpha})$. Then, for all $w \in V(\Lambda^{1})$ and all $g \in \mathcal{G}$ we have that $g \cdot w \in V(\Lambda^{1 + \alpha})$.
\end{lemma}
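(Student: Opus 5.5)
The plan rests on the fact that uniquely determined vertices are transported by automorphisms. Fix $w\in V(\Lambda^{1})$ and $g\in\mathcal{G}$. If $w\in V(\Lambda)$, the hypothesis gives $g\cdot w\in V(\Lambda^{\alpha})\subset V(\Lambda^{1+\alpha})$ directly. Otherwise $w=\langle A\rangle$ for some $A\subset V(\Lambda)$, that is, $\{w\}=\bigcap_{a\in A}\adj{\Gamma}{a}$. Since $g$ acts by a graph automorphism, $g\cdot\adj{\Gamma}{a}=\adj{\Gamma}{g\cdot a}$. Hence
$$\{g\cdot w\}=\bigcap_{a\in A}\adj{\Gamma}{g\cdot a},$$
so $g\cdot w=\langle g\cdot A\rangle$. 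By hypothesis $g\cdot A\subset V(\Lambda^{\alpha})$.

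\textbf{Finite $\alpha$.} Here $1+\alpha=\alpha+1$ is the successor of $\alpha$. The successor clause of the definition of rigid expansions then gives $g\cdot w\in V(\Lambda^{\alpha+1})=V(\Lambda^{1+\alpha})$, which finishes this case.

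\textbf{Infinite $\alpha$.} Now $1+\alpha=\alpha$, so I must place $g\cdot w$ in $V(\Lambda^{\alpha})$ itself, not in $V(\Lambda^{\alpha+1})$. The idea is to show that $g\cdot A$ already lies in an earlier stage $V(\Lambda^{\beta})$ with $\beta+1\le\alpha$; then $g\cdot w\in V(\Lambda^{\beta+1})\subset V(\Lambda^{\alpha})$.

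\begin{itemize}
\item First step: replace $A$ by a finite subset $A_0\subset A$ with $\bigcap_{a\in A_0}\adj{\Gamma}{a}=\{w\}$. In $\cg{N}$ I expect this to hold because a curve is cut out by finitely many disjointness conditions; in general $\Gamma$ I would try to extract it from the defining intersection.
\item Second step, $\alpha$ a limit ordinal: the finitely many vertices $g\cdot a$ with $a\in A_0$ each lie in some $V(\Lambda^{\beta_a})$ with $\beta_a<\alpha$. Taking $\beta=\max_a\beta_a$ gives $g\cdot A_0\subset V(\Lambda^{\beta})$. Since $\beta+1<\alpha$, we get $g\cdot w\in V(\Lambda^{\alpha})$.
\end{itemize}

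The main obstacle is the infinite successor case $\alpha=\lambda+n$ with $\lambda$ a limit ordinal and $n\ge 1$. Here $g\cdot A_0\subset V(\Lambda^{\alpha})$ only yields stage $\alpha+1$. I would first try to exploit the rigid-expansion structure of each $g\cdot a$ and the symmetry of $\mathcal{G}$ to push $g\cdot A_0$ down to stage $\lambda+n-1$. Concretely, I would run a transfinite induction on the stage at which each $g\cdot a$ first appears, aiming to show that every $g\cdot a$ with $a\in V(\Lambda)$ already appears by stage $\lambda+(n-1)$ once it appears by $\lambda+n$. I am not confident this holds for an arbitrary $\Gamma$. If it fails, the fallback is to verify directly that $g\cdot A_0$ lies at the earlier stage in the concrete situations where the lemma is applied. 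The step that may not go through is this infinite successor case.
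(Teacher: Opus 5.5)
Your handling of $w\in V(\Lambda)$ and of $w=\langle A\rangle$ is exactly the paper's proof: you transport unique determination along the automorphism, so $g\cdot w=\langle g\cdot A\rangle$ with $g\cdot A\subset V(\Lambda^{\alpha})$, and then apply the successor clause. The paper does not separate finite from infinite $\alpha$. It goes straight from $g\cdot A\subset V(\Lambda^{\alpha})$ to ``$g\cdot w\in V(\Lambda^{1+\alpha})$'', so what it actually proves is membership in the successor stage $V(\Lambda^{\alpha+1})$. Writing $1+\alpha$ is harmless there only because the lemma is applied with finite $\alpha$ (namely $\alpha=13$).

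Your suspicion about infinite $\alpha$ is correct, and the case you leave open cannot be closed. With ordinal addition read literally, the statement fails for $\alpha=\omega$.

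\emph{Construction.} Take a left half with vertices $w$, $a_n,b_n,p_n,q_n$ $(n\ge 1)$, $y_m$ $(m\ge 2)$, $s$, $r$. Join $w$ to every $a_n$, $a_n$ to $a_{n+1}$, $b_n$ to $a_n,p_n,q_n$, $y_m$ to $a_n$ for $n<m$, and $s$ to $a_1$ and $r$. Add a primed mirror copy (write $c_n=a_n'$ and $z=w'$) and one vertex $o$ joined to $r$ and $r'$. Let $\sigma$ be the mirror involution, $\mathcal{G}=\{\sigma\}$, and $\Lambda=\{a_n,b_n,b_n' : n\ge 1\}\cup\{s,s'\}$.

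\emph{Stages.} We have $w=\langle\{a_n\}_{n\ge 1}\rangle\in V(\Lambda^{1})$, $c_1=\langle\{s',b_1'\}\rangle$ and $c_{k+1}=\langle\{c_k,b_{k+1}'\}\rangle$. By induction, $V(\Lambda^{k})=V(\Lambda)\cup\{w,c_1,\dots,c_k\}$ for $1\le k<\omega$, because:
the twins $p_n,q_n$ are never determined;
every neighbour of $y_m$ is a neighbour of $w$, and every neighbour of $y_m'$ is a neighbour of $z$;
each of $r,r',o$ can only be determined after another of them;
for $j\ge k+2$ the only neighbour of $c_j$ present at stage $k$ is $b_j'$;
and $z$, whose neighbours are exactly the $c_n$, is not determined by any subset of $\{c_1,\dots,c_k\}$, since $y_{k+1}'$ is adjacent to all of them.

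\emph{Conclusion of the example.} Thus $\sigma(V(\Lambda))\subset V(\Lambda^{\omega})$, so the hypothesis holds with $\alpha=\omega$. Yet $\sigma(w)=z\notin V(\Lambda^{\omega})=V(\Lambda^{1+\omega})$. The same example shows your finite-subset reduction is false for general graphs: $w$ is determined by $\{a_n\}$ but by no finite subfamily.

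So the repair belongs in the statement, not in your argument. The conclusion should be $g\cdot w\in V(\Lambda^{\alpha+1})$, and correspondingly $V(\Lambda^{\alpha+\beta})$ rather than $V(\Lambda^{\beta+\alpha})$ in Corollary~\ref{cor:gensetYinYk+alpha}. These agree with the paper's expressions for the finite ordinals actually used. With that reading, your first paragraph plus your finite-$\alpha$ step is a complete proof, identical to the paper's, and you should drop the infinite-case discussion.
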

\begin{proof}
 Let $w \in V(\Lambda^{1})$. If $w \in V(\Lambda)$ then the result follows. If $w \notin V(\Lambda)$, there exists a subset $A \subset V(\Lambda)$ such that $w = \langle A \rangle$.
 
 Since $G$ is acting by automorphisms we have that $g \cdot w = \langle g \cdot A \rangle$, for any $g \in G$.
 
 If $g \in \mathcal{G}$, we have that $g \cdot A \subset V(\Lambda^{\alpha})$ by hypothesis. Thus, $g \cdot w \in V(\Lambda^{1+\alpha})$ as desired.
\end{proof}

Using induction and the way the rigid expansions are defined we obtain the following corollary.

\begin{corollary}\label{cor:gensetYinYk+alpha}
 Let $\Lambda \leq \Gamma$. Suppose there exists a countable ordinal $\alpha$ such that for all $g \in \mathcal{G}$ and all $v \in V(\Lambda)$ we have $g \cdot v \in V(\Lambda^{\alpha})$. Then, for any countable ordinal $\beta$, for all $w \in V(\Lambda^{\beta})$ and all $g \in \mathcal{G}$ we have that $g \cdot w \in V(\Lambda^{\beta + \alpha})$.
\end{corollary}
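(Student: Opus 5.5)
The proof is by transfinite induction on the countable ordinal $\beta$, with $g\in\mathcal{G}$ and the generating set fixed. The argument uses two facts. The first is that rigid expansions are monotone: $\gamma\le\delta$ implies $V(\Lambda^{\gamma})\subset V(\Lambda^{\delta})$, which follows immediately from the definition. The second, used in the proof of Lemma \ref{lemma:gensetYinYk+1}, is that $g\cdot\langle A\rangle=\langle g\cdot A\rangle$ because $G$ acts by automorphisms.

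\emph{Base case and limit step.} For $\beta=0$ the claim is exactly the hypothesis, since $0+\alpha=\alpha$. Now let $\beta$ be a limit ordinal and $w\in V(\Lambda^{\beta})$. Then $w\in V(\Lambda^{\gamma})$ for some $\gamma<\beta$. By induction $g\cdot w\in V(\Lambda^{\gamma+\alpha})$, and $\gamma+\alpha\le\beta+\alpha$ by monotonicity of ordinal addition in the left argument, so $g\cdot w\in V(\Lambda^{\beta+\alpha})$.

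\emph{Successor step.} Let $\beta=\gamma+1$ and $w\in V(\Lambda^{\beta})$. If $w\in V(\Lambda^{\gamma})$, induction and monotonicity finish. Otherwise $w=\langle A\rangle$ with $A\subset V(\Lambda^{\gamma})$. By induction $g\cdot A\subset V(\Lambda^{\gamma+\alpha})$, hence $g\cdot w=\langle g\cdot A\rangle\in V(\Lambda^{\gamma+\alpha+1})$. It remains to compare $\gamma+\alpha+1$ with $\gamma+1+\alpha$. When $\alpha$ is finite, which is the case in the intended application to the finite graph $\Y$, one has $1+\alpha=\alpha+1$, so the two ordinals coincide and we are done. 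This is exactly the pattern of Lemma \ref{lemma:gensetYinYk+1}, which is the case $\beta=1$.

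\emph{Main obstacle: infinite $\alpha$.} Here $1+\alpha=\alpha$, so $\gamma+1+\alpha=\gamma+\alpha$, and the naive step overshoots by one. For this case I would first try to absorb the extra $+1$ into the limit part of $\alpha$. Write $\alpha=\lambda+n$ with $\lambda$ a limit ordinal and $n<\omega$. Assume the determining set $A$ may be taken finite; this is also implicitly needed in Lemma \ref{lemma:gensetYinYk+1} when $\alpha\ge\omega$. Then each $g\cdot a$ with $a\in A$ should lie at a stage strictly below the limit $\gamma+\lambda$ before the final $n$ steps, so that finitely many of them fit in a common stage $\Lambda^{\delta+n}$ with $\delta<\gamma+\lambda$. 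This would give $g\cdot w\in V(\Lambda^{\delta+n+1})\subset V(\Lambda^{\gamma+\lambda+n})=V(\Lambda^{\gamma+\alpha})$.

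Making this work requires strengthening the inductive statement so that it also records a stage strictly below the limit. Such a refined bound is what the hypothesis would have to provide. If it cannot be arranged, I would restrict the corollary to finite $\alpha$, the only case used in the sequel, where the induction above is complete. I expect this successor-step bookkeeping for infinite $\alpha$ to be the only genuinely delicate point.
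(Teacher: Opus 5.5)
Your transfinite induction is the argument the paper has in mind; the paper only says ``induction and the way the rigid expansions are defined''. For finite $\alpha$ your argument is complete. The base case is the hypothesis. The limit step uses $\gamma+\alpha\le\beta+\alpha$ and monotonicity of $\delta\mapsto V(\Lambda^{\delta})$. The successor step uses $g\cdot\langle A\rangle=\langle g\cdot A\rangle$ together with $\gamma+\alpha+1=\gamma+1+\alpha$. Finite $\alpha$ is the only case the paper needs, since it applies the corollary with $\alpha=13$.

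Your worry about infinite $\alpha$ is justified, and no refined bookkeeping can remove it. For $\alpha=\omega$, $\beta=1$ the statement is false, and so is Lemma~\ref{lemma:gensetYinYk+1}, whose $1+\alpha$ should read $\alpha+1$. Here is a counterexample.

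\emph{The graph.} Take $\Gamma$ with vertices $v_n,u_n,q_n,z_n,z^{*}_n$ ($n\ge 1$) and $w,w^{*}$. Its edges are:
\begin{itemize}
\item $v_1u_1$, $v_nv_{n+1}$, $u_nu_{n+1}$, $q_1v_1$, $q_1u_1$;
\item $q_{n+1}$ joined to $v_n,v_{n+1},u_n,u_{n+1}$;
\item $w$ joined to every $v_n$, and $w^{*}$ joined to every $u_n$;
\item $z_k$ joined to $v_1,\dots,v_k$, and $z^{*}_k$ joined to $u_1,\dots,u_k$.
\end{itemize}
This graph is connected. The involution $g$ swapping $v_n\leftrightarrow u_n$, $w\leftrightarrow w^{*}$, $z_k\leftrightarrow z^{*}_k$ and fixing each $q_n$ is an automorphism. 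Let $G=\{e,g\}$ with $\mathcal{G}=\{g\}$, and let $\Lambda$ be spanned by all the $v_n$ and $q_n$.

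\emph{The expansions.} One checks that $V(\Lambda^{m})=V(\Lambda)\cup\{w,u_1,\dots,u_m\}$ for $1\le m<\omega$:
\begin{itemize}
\item $w=\langle\{v_n\}_{n\ge 1}\rangle$, $u_1=\langle\{v_1,q_1\}\rangle$ and $u_{m+1}=\langle\{u_m,q_{m+1},q_{m+2}\}\rangle$.
\item For $n\ge m+2$, the neighbours of $u_n$ in $V(\Lambda^{m})$ are $q_n,q_{n+1}$, which have the two common neighbours $u_n,v_n$.
\item The neighbours of $w^{*}$ in $V(\Lambda^{m})$ are $u_1,\dots,u_m$, which are also adjacent to every $z^{*}_k$ with $k\ge m$.
\item Each $z_k$ (resp.\ $z^{*}_k$) shares its neighbours in $V(\Lambda^{m})$ with $w$ (resp.\ $w^{*}$).
\end{itemize}

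\emph{The failure.} Hence $g\cdot V(\Lambda)\subset V(\Lambda^{\omega})$, so the hypothesis holds with $\alpha=\omega$. Yet $w\in V(\Lambda^{1})$ while $g\cdot w=w^{*}\notin V(\Lambda^{\omega})=V(\Lambda^{1+\omega})$. Note that $w$ is determined only by the infinite set $\{v_n\}$. That is exactly what your finite-$A$ assumption excludes, and nothing in the hypotheses provides that assumption.

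What your induction proves verbatim for every countable $\alpha$ is $g\cdot w\in V(\Lambda^{\alpha+\beta})$. The successor step uses $(\alpha+\gamma)+1=\alpha+(\gamma+1)$, and the limit step uses $\alpha+\gamma\le\alpha+\beta$. When $\alpha<\omega$ one has $\alpha+\beta\le\beta+\alpha$, since $\alpha+\beta=\beta$ for infinite $\beta$. So this recovers the corollary in the finite case. Restricting to finite $\alpha$, or writing $\alpha+\beta$ instead of $\beta+\alpha$, is the right fix, and your proof of that case stands.
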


If we denote by $|g|_{\mathcal{G}}$ the distance between $g \in G$ and its identity $e$ with respect to the word metric induced by $\mathcal{G}$, an application of Corollary \ref{cor:gensetYinYk+alpha}$|g|_{\mathcal{G}}$-many times implies the following result.

\begin{lemma}\label{lemma:groupYinY}
 Let $\Lambda \leq \Gamma$. Suppose there exists a countable ordinal $\alpha$ such that for all $g \in \mathcal{G}$ and all $v \in V(\Lambda)$ we have $g \cdot v \in V(\Lambda^{\alpha})$. Then, for any $g \in G \setminus \{e\}$ and any $v \in V(\Lambda^{\beta})$ we have that $g \cdot v \in V(\Lambda^{\beta + \alpha \cdot |g|_{\mathcal{G}}})$. In particular, $G \cdot v \subset V(\Lambda^{\beta + \alpha \cdot \omega})$.
\end{lemma}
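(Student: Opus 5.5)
We argue by induction on the word length $n = |g|_{\mathcal{G}} \geq 1$, using Corollary \ref{cor:gensetYinYk+alpha} as the inductive engine and ordinal arithmetic to track the indices. Fix a countable ordinal $\beta$ and $v \in V(\Lambda^{\beta})$, and write $g = g_n g_{n-1} \cdots g_1$ with each $g_i \in \mathcal{G}$ and $n = |g|_{\mathcal{G}}$; such a word exists because $\mathcal{G}$ is a symmetric generating set.

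The claim to prove by induction is that $g_k \cdots g_1 \cdot v \in V(\Lambda^{\beta + \alpha \cdot k})$ for $1 \le k \le n$. For $k = 1$ this is exactly Corollary \ref{cor:gensetYinYk+alpha} applied to $v \in V(\Lambda^{\beta})$ and $g_1 \in \mathcal{G}$, giving $g_1 \cdot v \in V(\Lambda^{\beta + \alpha})$. For the inductive step, suppose $w := g_k \cdots g_1 \cdot v \in V(\Lambda^{\beta + \alpha\cdot k})$. Apply Corollary \ref{cor:gensetYinYk+alpha} again, now with the countable ordinal $\beta + \alpha \cdot k$ in place of $\beta$ and the generator $g_{k+1}$. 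This yields $g_{k+1}\cdot w \in V(\Lambda^{(\beta + \alpha\cdot k) + \alpha})$. By associativity of ordinal addition and the defining identity $\alpha\cdot k + \alpha = \alpha\cdot(k+1)$ for ordinal multiplication by a successor, the index is $\beta + \alpha\cdot(k+1)$. Taking $k = n$ gives the first assertion. The only point needing care is this bookkeeping: ordinal addition is not commutative, so the new $\alpha$ must always be appended on the right. Corollary \ref{cor:gensetYinYk+alpha} produces exactly that form, so the step goes through.

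For the \emph{in particular} part, note first that rigid expansions are nested: $V(\Lambda^{\gamma}) \subset V(\Lambda^{\delta})$ whenever $\gamma \le \delta$. This follows by a transfinite induction on $\delta$, since successor steps only add vertices and limit steps take unions. Since $\alpha\cdot n \le \alpha\cdot\omega$ for every finite $n$, we get $\beta + \alpha\cdot n \le \beta + \alpha\cdot\omega$, and hence $g\cdot v \in V(\Lambda^{\beta+\alpha\cdot\omega})$ for all $g \neq e$. The identity element trivially sends $v$ into $V(\Lambda^{\beta}) \subset V(\Lambda^{\beta+\alpha\cdot\omega})$. Finally, $\beta + \alpha\cdot\omega$ is still a countable ordinal, so all these expansions are defined within the framework above. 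This gives $G\cdot v \subset V(\Lambda^{\beta+\alpha\cdot\omega})$.

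I expect no serious obstacle. The only steps needing attention are the monotonicity of rigid expansions and the order-sensitive ordinal arithmetic. The degenerate case $\alpha = 0$ causes no trouble either: every index then collapses to $\beta$, and the hypothesis says $\Lambda$ is $\mathcal{G}$-invariant, so the statement remains true.
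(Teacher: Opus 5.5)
Your proposal is correct and follows the paper's own argument: the paper obtains this lemma by applying Corollary \ref{cor:gensetYinYk+alpha} $|g|_{\mathcal{G}}$-many times along a minimal word for $g$, which is your induction. Your version also makes explicit three details the paper leaves implicit: the right-appended ordinal bookkeeping $(\beta+\alpha\cdot k)+\alpha=\beta+\alpha\cdot(k+1)$, the monotonicity of rigid expansions needed for the ``in particular'' clause, and the separate treatment of $g=e$.
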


As a direct consequence of this, we have the following theorem.

\begin{theorem}\label{thm:seedgraph}
 Let $\Lambda \leq \Gamma$ and let also $\{v_{i}\}_{i \in I}$ be a set of representatives of $G \backslash \Gamma$. Suppose there exists a countable ordinal $\alpha$ such that for all $g \in \mathcal{G}$ and all $v \in V(\Lambda)$ we have $g \cdot v \in V(\Lambda^{\alpha})$. If for every $i \in I$ there exists a countable ordinal $\beta_{i}$ such that $v_{i} \in V(\Lambda^{\beta_{i}})$, then $\Lambda$ is a seed subgraph of $\Gamma$.
\end{theorem}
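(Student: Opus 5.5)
The plan is to apply Lemma \ref{lemma:groupYinY} to each orbit representative and then take a single countable ordinal that bounds all the resulting ones.

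Let $w \in V(\Gamma)$. Since $\{v_{i}\}_{i \in I}$ is a set of representatives of the orbits of $G$ acting on $V(\Gamma)$, there are $i \in I$ and $g \in G$ with $w = g \cdot v_{i}$. By hypothesis $v_{i} \in V(\Lambda^{\beta_{i}})$.

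If $g = e$, then $w = v_{i} \in V(\Lambda^{\beta_{i}})$ and there is nothing to prove. Otherwise the hypothesis on $\alpha$ is exactly that of Lemma \ref{lemma:groupYinY}, applied with $\beta = \beta_{i}$. It gives
$$w = g \cdot v_{i} \in V\bigl(\Lambda^{\beta_{i} + \alpha \cdot |g|_{\mathcal{G}}}\bigr) \subset V\bigl(\Lambda^{\beta_{i} + \alpha \cdot \omega}\bigr).$$
Here $|g|_{\mathcal{G}}$ is finite because $\mathcal{G}$ generates $G$. The inclusion holds because rigid expansions are nested: successor steps only add vertices and limit steps take unions, so $\Lambda^{\gamma} \leq \Lambda^{\delta}$ whenever $\gamma \leq \delta$.

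The ordinal $\beta_{i} + \alpha \cdot |g|_{\mathcal{G}}$ is a sum of countable ordinals with a countable ordinal times a natural number, hence countable. So every vertex of $\Gamma$ lies in some $V(\Lambda^{\gamma})$ with $\gamma < \omega_{1}$. This says $V(\Gamma) = \bigcup_{\gamma < \omega_{1}} V(\Lambda^{\gamma})$, i.e. $\Lambda$ is a seed subgraph.

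The only point needing care is that the definition of seed subgraph takes the union over all $\gamma < \omega_{1}$. We therefore need no uniform bound on the $\beta_{i}$, even when $I$ is infinite; we only need each ordinal $\beta_{i} + \alpha \cdot |g|_{\mathcal{G}}$ to be countable, which it is. (When $I$ is countable, the supremum of the $\beta_{i} + \alpha \cdot \omega$ is also countable, giving a single bound, but this is not required.)
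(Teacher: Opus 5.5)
Your argument is correct and is essentially the paper's: the paper gives no separate proof and presents the theorem as a direct consequence of Lemma~\ref{lemma:groupYinY}. You have filled in the details the paper leaves implicit: writing $w = g\cdot v_i$, treating $g=e$ separately, using nestedness of the expansions, and observing that each $\beta_i + \alpha\cdot|g|_{\mathcal{G}}$ is countable, so no uniform bound over $I$ is needed.

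One caveat, inherited from the paper rather than introduced by you. For infinite $\alpha$ the ordinal bookkeeping in Lemmas~\ref{lemma:gensetYinYk+1}--\ref{lemma:groupYinY} is only justified up to reordering. The proof of Lemma~\ref{lemma:gensetYinYk+1} actually yields $\alpha+1$, not $1+\alpha$, and the general bound it supports is $\alpha\cdot|g|_{\mathcal{G}}+\beta$. That ordinal is equally countable, so your conclusion is unaffected.
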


\subsection{Curve graph}\label{subsec:curvegraph}

We denote the closed non-orientable surface of genus $g$ as $N_{g}$. Let $N = N_g$ for $g \geq 6$.

A \textit{curve} is a topological embedding of the unit circle into $N$. We often abuse notation and call ``curve'' the embedding, its image on $N$ or its isotopy class. The context makes clear which use we mean.

A curve is \textit{essential} if it is not isotopic to a boundary curve and if it does not bound a disk, a punctured disk or a Möbius band. Unless otherwise stated, all curves are assumed to be essential.

The \textit{(geometric) intersection number} of two isotopy classes of essential curves $\alpha$ and $\beta$ is defined as: $$i(\alpha,\beta):= \min \{|a \cap b| : a \in \alpha, b \in \beta\}.$$

A curve $\alpha$ is \textit{separating} if $N \setminus \alpha$ is disconnected. It is \textit{non-separating} otherwise.

A curve $\alpha$ is \textit{one-sided} if its regular neighbourhood is homeomorphic to a Möbius band. It is \emph{two-sided} otherwise.

The \textit{curve graph} of $N$ is defined as the simplicial graph whose vertices are the isotopy classes of essential curves on $N$, and two vertices $[\alpha]$ and $[\beta]$ span an edge if $i([\alpha],[\beta]) = 0$.

\subsection{Model for $N_{g}$ and the subgraph $\Y \leq \cg{N}$}\label{subsec:model}

Let $H$ and $H^{\prime}$ be a pair of $2g$-gons with sides labeled $(a_{1}, e_{1}, \ldots, a_{g}, e_{g})$ and $(a^{\prime}_{1}, e^{\prime}_{1}, \ldots, a^{\prime}_{g}, e^{\prime}_{g})$ respectively. We then define $P$ as: $$P := H \sqcup H^{\prime}/ \sim,$$ where $\sim$ is the gluing of the sides $e_{i}$ with $e^{\prime}_{i}$ for all $i$. Note that $P$ is homeomorphic to a surface of genus zero and $g$ boundary components.

Finally we define $N$ as $P / \sim$ where $\sim$ is now the gluing of the sides $a_{i}^{-1}$ with $a^{\prime}_{i}$ for all $i$. Thus, $N$ is homeomorphic to $N_{g}$.

Now, we define the curves that make up the vertices of $\Y$:
\begin{itemize}
 \item Let $1 \leq i, j \leq g$. We define $a_{i,j}$ and $a^{\prime}_{i,j}$ as the linear segments with endpoints $(a_{i}, e_{j})$ and $(a^{\prime}_{i}, e^{\prime}_{j})$ respectively. Then, we define the curve $\alpha_{i,j}$ as the curve in $N$ obtained by projecting $a_{i,j} \cup a^{\prime}_{i,j}$. See Figure \ref{fig:def-alphaij}. Note that $\alpha_{i,j}$ is always one-sided.
 \begin{figure}[ht]
  \centering
  \includegraphics[height=4cm]{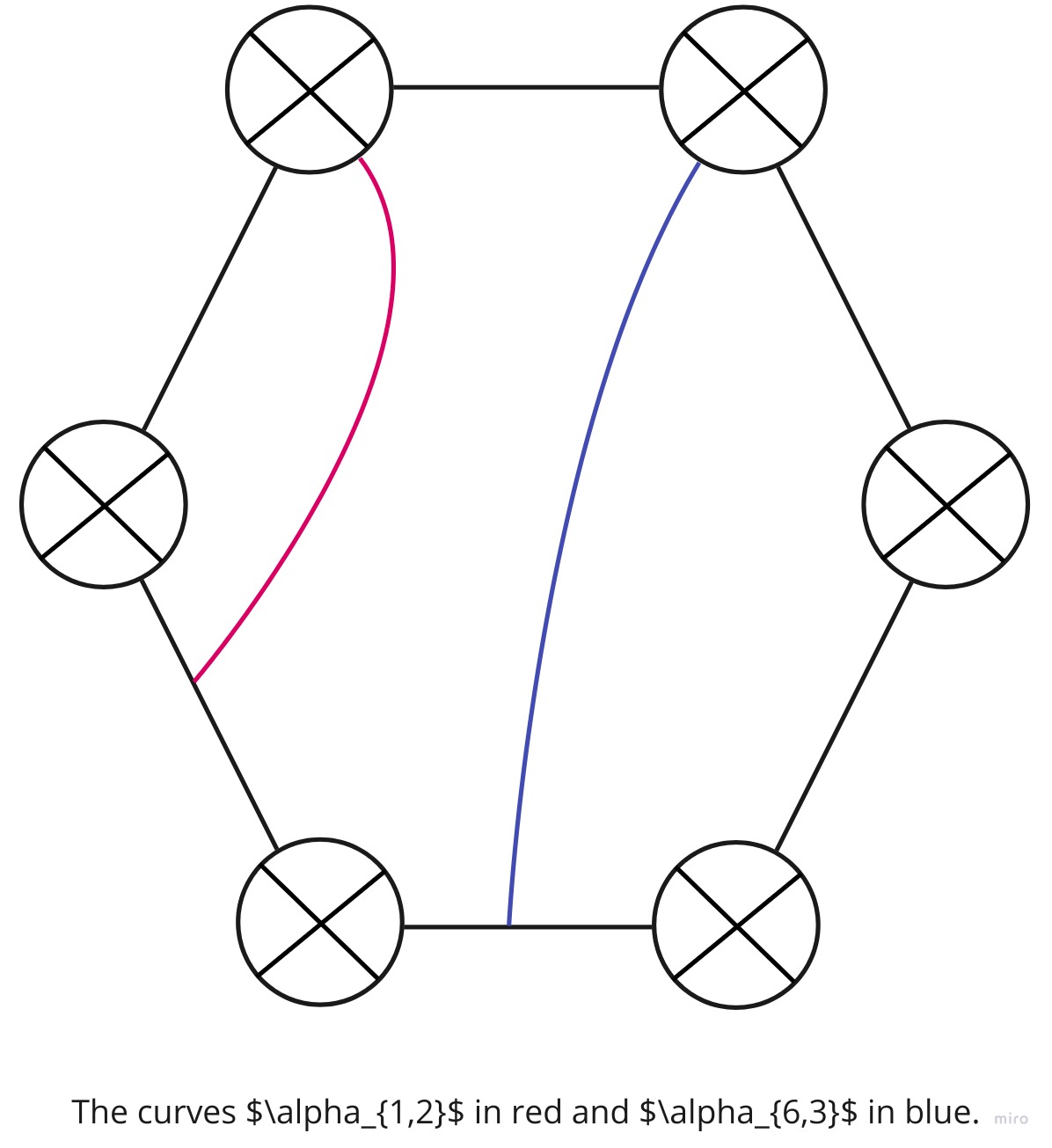}\caption{The curves $\alpha_{1,2}$ in red and $\alpha_{6,3}$ in blue.}\label{fig:def-alphaij}
 \end{figure}
 \item Let $1 \leq i,j \leq g$ with $|i-j| \geq 2$ modulo $g$. We define $n^{+}_{i,j}$ as a pair of linear segments on $H$, one going from $e_{i}$ to $e_{j-1}$ and the other from $e_{i-1}$ to $e_{j}$. We define $(n^{+})^{\prime}_{i,j}$ as a pair of linear segments on $H^{\prime}$, one going from $e^{\prime}_{i}$ to $e^{\prime}_{i-1}$ and the other from $e^{\prime}_{j-1}$ to $e^{\prime}_{j}$. We define $\nu^{+}_{i,j}$ as the curve in $N$ obtained by projecting $n^{+}_{i,j} \cup (n^{+})^{\prime}_{i,j}$. See Figure \ref{fig:def-nupmij}. Note that $\nu^{+}_{i,j}$ is always two-sided.
 \item Let $1 \leq i,j \leq g$ with $|i - j| \geq 2$ modulo $g$. We define $n^{-}_{i,j}$ as a pair of linear segments on $H^{\prime}$, one going from $e^{\prime}_{i}$ to $e^{\prime}_{j-1}$ and the other from $e^{\prime}_{i-1}$ to $e^{\prime}_{j}$. We define $(n^{-})^{\prime}_{i,j}$ as a pair of linear segments on $H$, one going from $e_{i}$ to $e_{i-1}$ and the other from $e_{j-1}$ to $e_{j}$. We define $\nu^{-}_{i,j}$ as the curve in $N$ obtained by projecting $n^{-}_{i,j} \cup (n^{-})^{\prime}_{i,j}$. See Figure \ref{fig:def-nupmij}. Note that $\nu^{-}_{i,j}$ is always two-sided.
 \begin{figure}[ht]
  \centering
  \includegraphics[height=4cm]{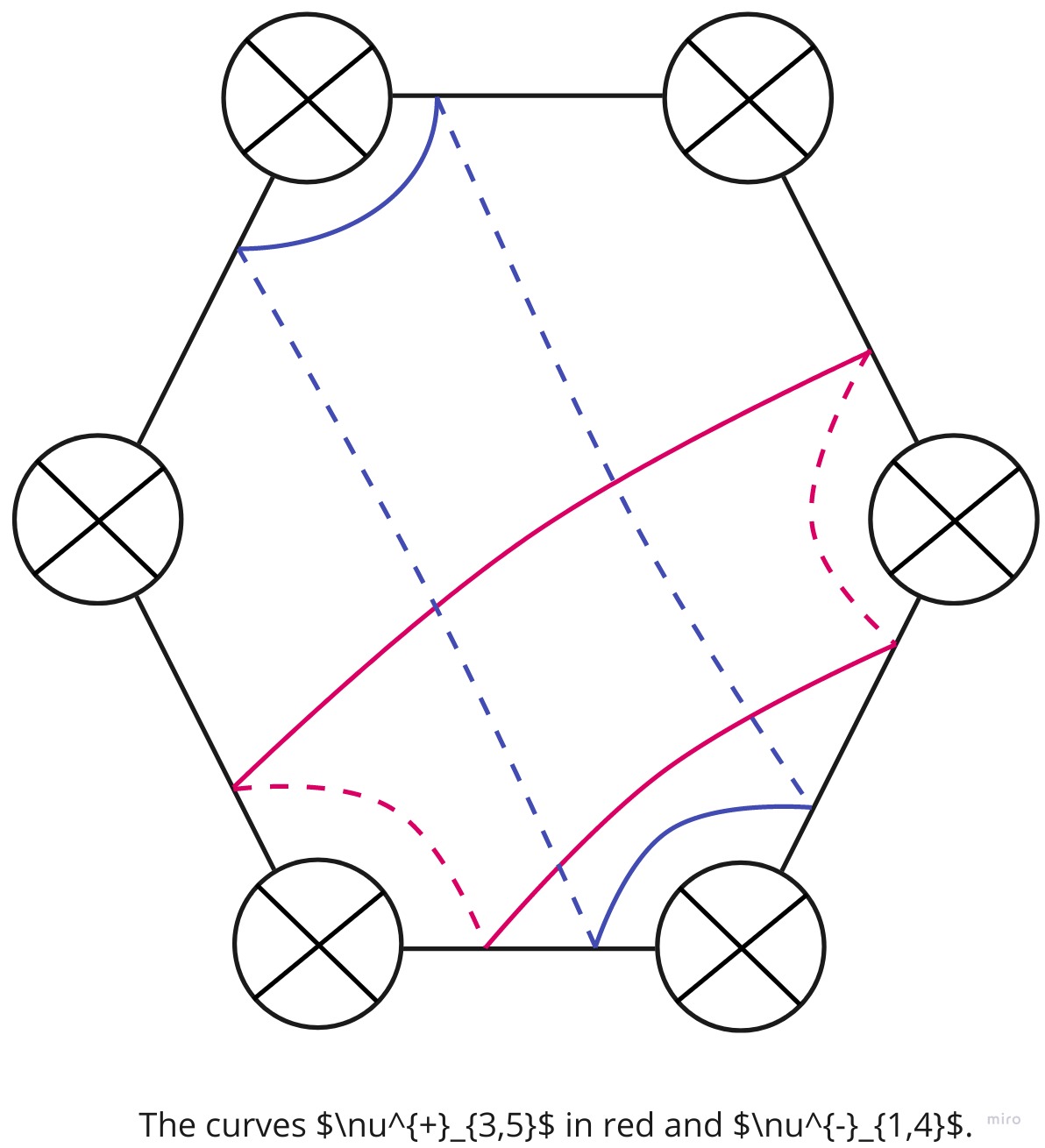}\caption{The curves $\nu^{+}_{3,5}$ in red and $\nu^{-}_{1,4}$ in blue.}\label{fig:def-nupmij}
 \end{figure}
\end{itemize}

Note that $\alpha_{i,i}$ and $\alpha_{i,i-1}$ are isotopic. Thus, to ease notation, we define $\alpha_{i}$ as $\alpha_{i,i}$ (isotopic to $\alpha_{i,i-1}$).

We define the sets $\A$ and $\M$ as follows (the subindices are modulo $g$): $$\A := \{\alpha_{i,i+1}\}_{1 \leq i \leq g} \cup \{\alpha_{i,i-2}\}_{1 \leq i \leq g},$$
$$\M := \{\nu^{-}_{5,7}\}.$$

Finally, we define $\Y$ as the subgraph of $\cg{N}$ whose vertex set is $\A \cup \M$.

\subsection{Useful additional curves}\label{subsec:usefulcurves}

In this subsection we uniquely determine several curves that become quite useful in proving Theorem \ref{thm:exhaustionY}.

\subsubsection{The curves $\veps_{i,j}$ and the set $\E$}\label{subsubsec:vepsijE}

Let $1 \leq i,j \leq g$. We define $e_{i,j}$ and $e^{\prime}_{i,j}$ as the linear segments with endpoints $(e_{i},e_{j})$ and $(e^{\prime}_{i}, e^{\prime}_{j})$ respectively. Then, we define the curve $\veps_{i,j}$ as the curve in $N$ obtained by projecting $e_{i,j} \cup e^{\prime}_{i,j}$. See Figure \ref{fig:def-vepsij}. Note that $\veps_{i,j}$ is always two-sided.
\begin{figure}[ht]
 \centering
 \includegraphics[height=4cm]{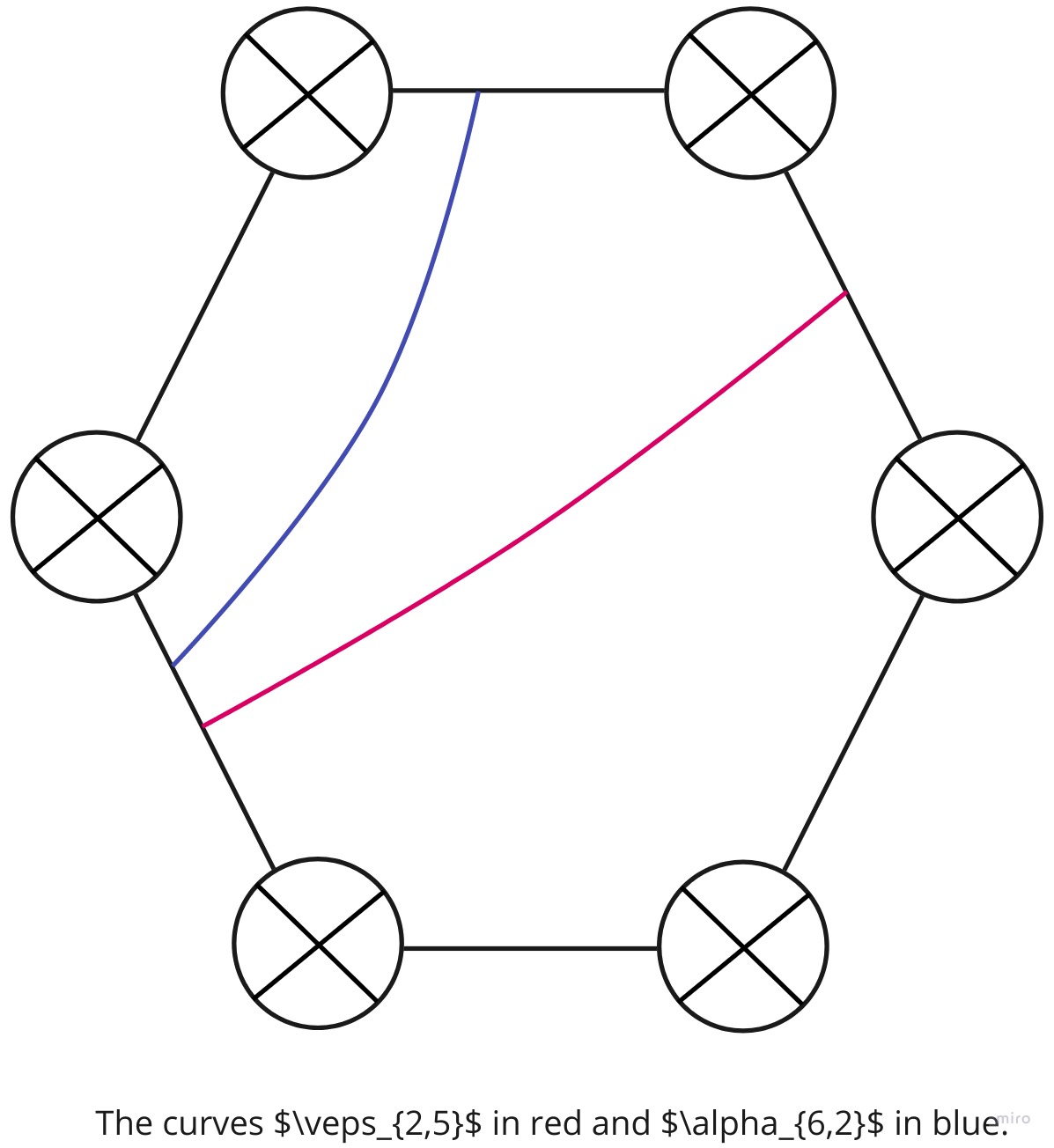}\caption{The curves $\veps_{2,5}$ in red and $\veps_{6,2}$ in blue.}\label{fig:def-vepsij}
\end{figure}

\begin{lemma}\label{lemma:vepsijv2}
 For all $1 \leq i,j \leq g$ such that $\veps_{i,j}$ is essential, we have that $\veps_{i,j} \in V(\Y^{1})$.
\end{lemma}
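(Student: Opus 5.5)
For each pair $i,j$ with $\veps_{i,j}$ essential, I will exhibit a subset $A\subset\A$ such that $\veps_{i,j}=\langle A\rangle$. Then $\veps_{i,j}\in V(\Y^{1})$ by definition of the first rigid expansion.

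First I record what $\veps_{i,j}$ looks like. In $P$ it is the curve built from a segment in $H$ joining $e_i$ to $e_j$ and a segment in $H'$ joining $e'_i$ to $e'_j$. So it separates $P$ into two pieces: one contains the boundary arcs $a_{i+1},\dots,a_j$ and the other contains $a_{j+1},\dots,a_i$ (indices mod $g$). Thus $\veps_{i,j}$ is essential exactly when neither piece is a disk or a Möbius band, i.e. each side carries at least two crosscaps.

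The natural candidate for $A$ is the set of curves $\alpha_{k,l}\in\A$ disjoint from $\veps_{i,j}$. These are the $\alpha_{k,l}$ whose segments $a_{k,l}$ and $a'_{k,l}$ run from $a_k$ to $e_l$ on the same side of the cut. Concretely, $\alpha_{k,k+1}$ and $\alpha_{k,k-2}$ should lie in $A$ whenever the indices $k,k+1$ (resp. $k-2,k-1,k$) all lie on one side of the cut. Since each side has at least two crosscaps, every crosscap $k$ contributes at least one of these curves.

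Next I show uniqueness: any curve $\gamma$ disjoint from all of $A$ is isotopic to $\veps_{i,j}$. The plan is to cut $N$ along the curves of $A$, or rather along suitably chosen ones. Here the one-sided curves $\alpha_{k,k+1}$ and $\alpha_{k,k-2}$ attached to consecutive crosscaps on the same side chain together and fill that side of $\veps_{i,j}$. Two such curves meet once, and on a side with $m$ crosscaps they form a "linear chain" whose complement in that side is, I expect, a disk, or an annulus whose core is $\veps_{i,j}$.

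The resulting decomposition is as follows. $N\setminus\bigcup A$ should consist of disks, together with exactly one annulus (or the regions adjacent to $\veps_{i,j}$ glued along it) homotopic to $\veps_{i,j}$. Any essential $\gamma$ disjoint from $A$ lives in a complementary region, hence is trivial or isotopic to $\veps_{i,j}$. To verify this I will use an Euler characteristic count on the graph $\bigcup A$ together with the polygon model. Alternatively, I can argue side by side: the union of the curves in $A$ on one side, with a regular neighbourhood, is a subsurface whose only non-disk complementary component (within that side) is a collar of $\veps_{i,j}$.

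The main obstacle is this filling statement, especially in edge cases. One such case is a side with exactly two crosscaps, where perhaps only $\alpha_{k,k+1}$ or only a single chain element is available, and one must check that the neighbourhood of these curves is a Klein bottle minus a disk whose boundary is $\veps_{i,j}$. The other edge cases involve wraparound indices modulo $g$ at the cut points $i,j$, where an $\alpha_{k,k-2}$ crosses the cut. I would first handle the generic case (both sides having at least three crosscaps) by direct inspection of the polygon picture. I would then treat the two-crosscap sides separately, choosing for $A$ the available $\alpha_{k,k+1}$ or $\alpha_{k,k-2}$ that intersect once, possibly supplemented by $\nu^{-}_{5,7}$ if needed. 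In that two-crosscap case it suffices that two one-sided curves meeting once in a Klein-bottle-with-one-hole fill it.
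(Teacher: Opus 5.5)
Your choice of $A$ (all curves of $\A$ disjoint from $\veps_{i,j}$) is exactly the paper's set $A_1\cup A_2\cup A_3\cup A_4$. Proving that these curves fill each side of $\veps_{i,j}$ is also the right way to justify $\veps_{i,j}=\langle A\rangle$; the paper simply reads this off its figure. There is a genuine gap, however: the concrete facts you plan to use for the filling step are wrong, and in the case you flag as the main obstacle your argument would fail as written.

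\textbf{The description of $A$ is off by one.} The curve $\alpha_{k,k-2}$ runs from $a_k$ to $e_{k-2}$. So it passes through crosscap $k$ and encircles only crosscap $k-1$, and it is disjoint from $\veps_{i,j}$ exactly when crosscaps $k-1$ and $k$ lie on the same side. Your criterion ($k-2,k-1,k$ on one side) discards $\alpha_{i+2,i}$ and $\alpha_{j+2,j}$. With that smaller set uniqueness really fails. For $j=i+2$ the side $\{i+1,i+2\}$ would contain only $\alpha_{i+1,i+2}$, and $\alpha_{i+2}$ is disjoint from it and from every curve on the other side.

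\textbf{The intersection pattern is not a chain of single crossings.} A two-crosscap side $\{k,k+1\}$ always carries exactly two curves of $A$, namely $\alpha_{k,k+1}$ and $\alpha_{k+1,k-1}$, and these meet twice, not once. More generally, only pairs through a common crosscap (such as $\alpha_{k,k+1}$ and $\alpha_{k,k-2}$) meet once. Neighbouring pairs such as $\alpha_{k,k+1}$ and $\alpha_{k+1,k+2}$, or $\alpha_{k,k+1}$ and $\alpha_{k+2,k}$, meet twice.

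\textbf{The lemma you intend to invoke is false.} Two one-sided curves meeting once never fill a one-holed Klein bottle. For example, $\alpha_k$ and $\alpha_{k,k+1}$ meet once at crosscap $k$, yet both are disjoint from $\alpha_{k+1}$. In general, a regular neighbourhood of their union has Euler characteristic $-1=\chi(N_{2,1})$. Filling would therefore force it to be the whole one-holed Klein bottle up to a collar, with one boundary circle. But the mod $2$ intersection form on its first homology has rank one, so it has two boundary circles.

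What actually makes the two-crosscap case work is that the pair $\alpha_{k,k+1},\alpha_{k+1,k-1}$ does fill. Under disjointness, the one-sided curves of the one-holed Klein bottle form a bi-infinite path $\cdots-\alpha_{k+1,k-1}-\alpha_k-\alpha_{k+1}-\alpha_{k,k+1}-\cdots$, shifted by $\dt{\gamma_k}$. These two curves are at distance three, so no one-sided curve is disjoint from both. Every essential two-sided curve there, e.g.\ $\gamma_k$, meets every one-sided curve.

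Two further points:
\begin{itemize}
\item Appending $\nu^-_{5,7}$ is not available in general, since every element of $A$ must be adjacent to (disjoint from) $\veps_{i,j}$.
\item For sides with three or more crosscaps you still owe a genuine verification that the complement of the curves is a union of discs and a collar of $\veps_{i,j}$. This can be the polygon picture, or an Euler-characteristic count with the correct intersection numbers and no bigons.
\end{itemize}
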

\begin{proof}
If $|i-j| \leq 1$, then $\veps_{i,j}$ is not essential. Thus, we assume that $|i-j| \geq 2$.

We define the following sets: $$A_{1} = \{\alpha_{i+1,i+2}, \ldots, \alpha_{j-1,j}\},$$ $$A_{2} = \{\alpha_{j,j-2}, \ldots, \alpha_{i+2,i}\},$$ $$A_{3} = \{\alpha_{j+1,j+2}, \ldots, \alpha_{i-1,i}\},$$ $$A_{4} = \{\alpha_{i,i-2}, \ldots, \alpha_{j+2,j}\}.$$ Then we have that $\veps_{i,j} = \langle A_{1} \cup A_{2} \cup A_{3} \cup A_{4} \rangle \in V(\Y^{1})$. See Figure \ref{fig:lemma-vepsijv2}.
\end{proof}

\begin{figure}[ht]
 \centering
 \includegraphics[height=4cm]{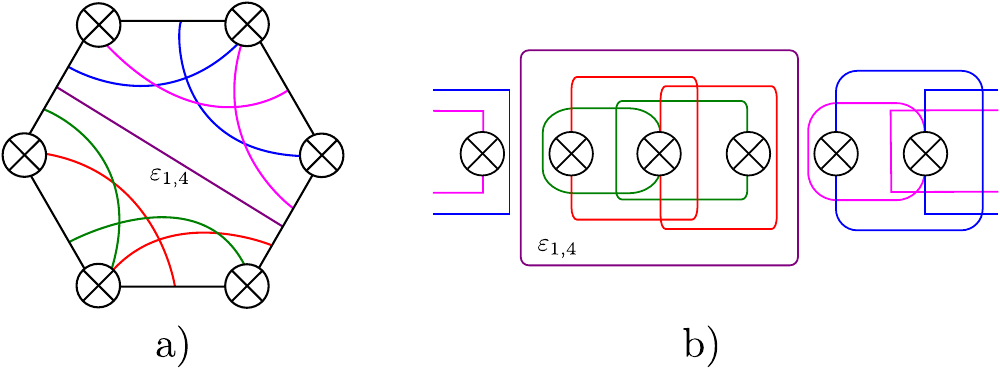}\caption{The curve $\varepsilon_{1,4}$ in purple.}\label{fig:lemma-vepsijv2}
\end{figure}

Due to its usefulness, we define the following set: $$\E:= \{\veps_{i,i+2}\}_{1 \leq i \leq g} \subset V(\Y^{1}).$$ We use this set thoroughly through the text.

\subsubsection{The curves $\alpha_{i}$}\label{subsubsec:alphai}

Here we prove the following lemma:

\begin{lemma}\label{lemma:alphai}
 For all $1 \leq i \leq g$, we have that $\alpha_{i} \in V(\Y^{1})$.
\end{lemma}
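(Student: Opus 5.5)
Following the pattern of Lemma \ref{lemma:vepsijv2}, we will exhibit $\alpha_{i}$ as $\langle A \rangle$ for an explicit set $A \subset \A \cup \M$. Recall that $\alpha_{i}=\alpha_{i,i}$ is isotopic to $\alpha_{i,i-1}$. In the model it is the one-sided curve crossing the glued side $a_{i}$ once and otherwise staying in the corner of $H$ and $H^{\prime}$ near the sides $e_{i-1},e_{i}$. It is therefore disjoint from every $\alpha_{k,k+1}$ and $\alpha_{k,k-2}$ whose segments avoid that corner. So the first step is to list the elements of $\A$ disjoint from $\alpha_{i}$:
\begin{itemize}
 \item all $\alpha_{k,k+1}$ with $k \neq i$;
 \item all $\alpha_{k,k-2}$ with $k \neq i$ whose segment does not cross the segment $a_{i,i}$.
\end{itemize}
Concretely, we will take $A$ to be the subset of $\A$ consisting of the arcs in the two "halves" away from $i$. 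This mirrors the sets $A_{1},\dots,A_{4}$ in Lemma \ref{lemma:vepsijv2}, for instance
$$\{\alpha_{i+1,i+2},\ldots,\alpha_{i-2,i-1}\} \cup \{\alpha_{i-1,i-3},\ldots,\alpha_{i+2,i}\},$$
with indices modulo $g$. If needed we will enlarge $A$ by elements of $\E \subset V(\Y^{1})$. However, since we want $\alpha_{i} \in V(\Y^{1})$, we must avoid $\E$, so $A$ stays inside $\A$ (and possibly $\M$).

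The second step is to cut $N$ along the curves of $A$ and identify the complement. The arcs $\alpha_{k,k+1}$ and $\alpha_{k,k-2}$ chosen above, together with the gluings, should cut $P$ into disks and once-punctured-disk-type pieces. The only surviving non-disk component should be a neighbourhood of the side $a_{i}$ together with the adjacent $e$-sides, which after the gluing $a_{i}^{-1}\sim a_{i}^{\prime}$ is a Möbius band, possibly with a hole. In the first case, the only essential curve in $N$ disjoint from all of $A$ and not in $A$ is its core, which is $\alpha_{i}$. In the second case, with a complementary pair of pants or a one-holed Möbius band, we check that its only essential non-peripheral curve is $\alpha_{i}$. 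Either way we get $\alpha_{i}=\langle A \rangle$.

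The main obstacle is this complement computation. We must make sure that no other essential curve survives, namely a two-sided curve such as some $\veps_{i-1,i+1}$ surrounding the corner, or the boundary of the Möbius band. Such a boundary curve bounds a Möbius band, so it is inessential; that is precisely why a Möbius band complement is ideal. If a two-sided curve like $\veps_{i-1,i+1}$ does survive, the first thing to try is adding a curve of $\A$ that intersects it but not $\alpha_{i}$, for instance $\alpha_{i-1,i-3}$ or $\alpha_{i+1,i-1}$. Failing that, we add $\nu^{-}_{5,7}$ when $i$ is near $5,7$. Alternatively, for general $i$ we can fall back to the case analysis by $i$, using the cyclic symmetry of the model.
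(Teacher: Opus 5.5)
Your proposal is essentially the paper's proof. The paper uses exactly your set $A_1=\{\alpha_{i+1,i+2},\ldots,\alpha_{i-2,i-1}\}$, $A_2=\{\alpha_{i-1,i-3},\ldots,\alpha_{i+2,i}\}\subset\A$ and simply asserts $\alpha_i=\langle A_1\cup A_2\rangle$, pointing to a figure; your intended check is the right verification, namely that the union of these curves (not pairwise disjoint, so one works with the complement of their union in minimal position rather than cutting along a multicurve) fills the complement of a Möbius-band neighbourhood of the $i$-th crosscap, whose boundary is inessential and whose core is $\alpha_i$. One caution: your hedged ``Möbius band with a hole'' alternative would fail, since such a region contains two distinct one-sided curves disjoint from $A$, so the computation must (and does) land in the pure Möbius band case.
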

\begin{proof}
 Let $1 \leq i \leq g$ be fixed. We define the sets: $$A_{1} = \{\alpha_{i+1,i+2}, \ldots, \alpha_{i-2,i-1}\},$$ $$A_{2} = \{\alpha_{i-1,i-3}, \ldots, \alpha_{i+2,i}\}.$$ Then, $\alpha_{i} = \langle A_{1} \cup A_{2} \rangle \in V(\Y^{1})$. See Figure \ref{fig:lemma-alphai}.
\end{proof}

\begin{figure}[ht]
 \centering
 \includegraphics[height=4cm]{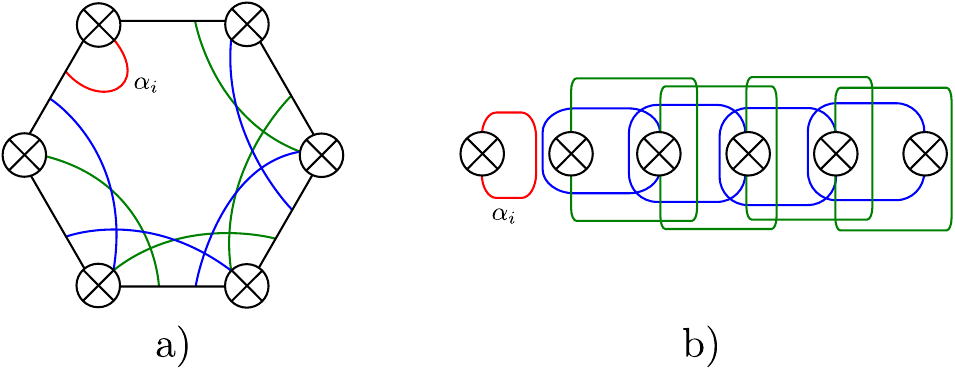}\caption{The curve $\alpha_{1}$ in red.}\label{fig:lemma-alphai}
\end{figure}

\subsubsection{The curves $\mu^{1}_{i}$}\label{subsubsec:mui}

In this subsubsection we define a family of curves $\mu^{1}_{i}$ for $1 \leq i \leq g$. There is a superscript $1$ because these curves end up being part of a slightly larger family $\{\mu^{m}_{i}: 1 \leq i \leq g, 1 \leq m \leq 2\}$ (see Subsubsection \ref{subsubsec:dtgammaiA}).

For $1 \leq i \leq g$ we define $\mu^{1}_{i}$ as follows: Let $$E_{1} = \E \setminus \{\veps_{i-3,i-1},\veps_{i-2,i}, \veps_{i-1,i+1},\veps_{i,i+2}\} \subset V(\Y^{1})$$ $$E_{2} = \{\veps_{i-2,i+1}\} \subset V(\Y^{1}),$$ $$A_{1} = \{\alpha_{i,i+1}, \alpha_{i,i-2}\} \subset V(\Y),$$ $$A_{2} = \{\alpha_{j}\}_{j \neq i} \subset V(\Y^{1}).$$ Then, $\mu^{1}_{i}:= \langle E_{1} \cup E_{2} \cup A_{1} \cup A_{2}\rangle \in V(\Y^{2})$. See Figure \ref{fig:def-mui}. 
\begin{figure}[ht]
  \centering
  \includegraphics[height=4cm]{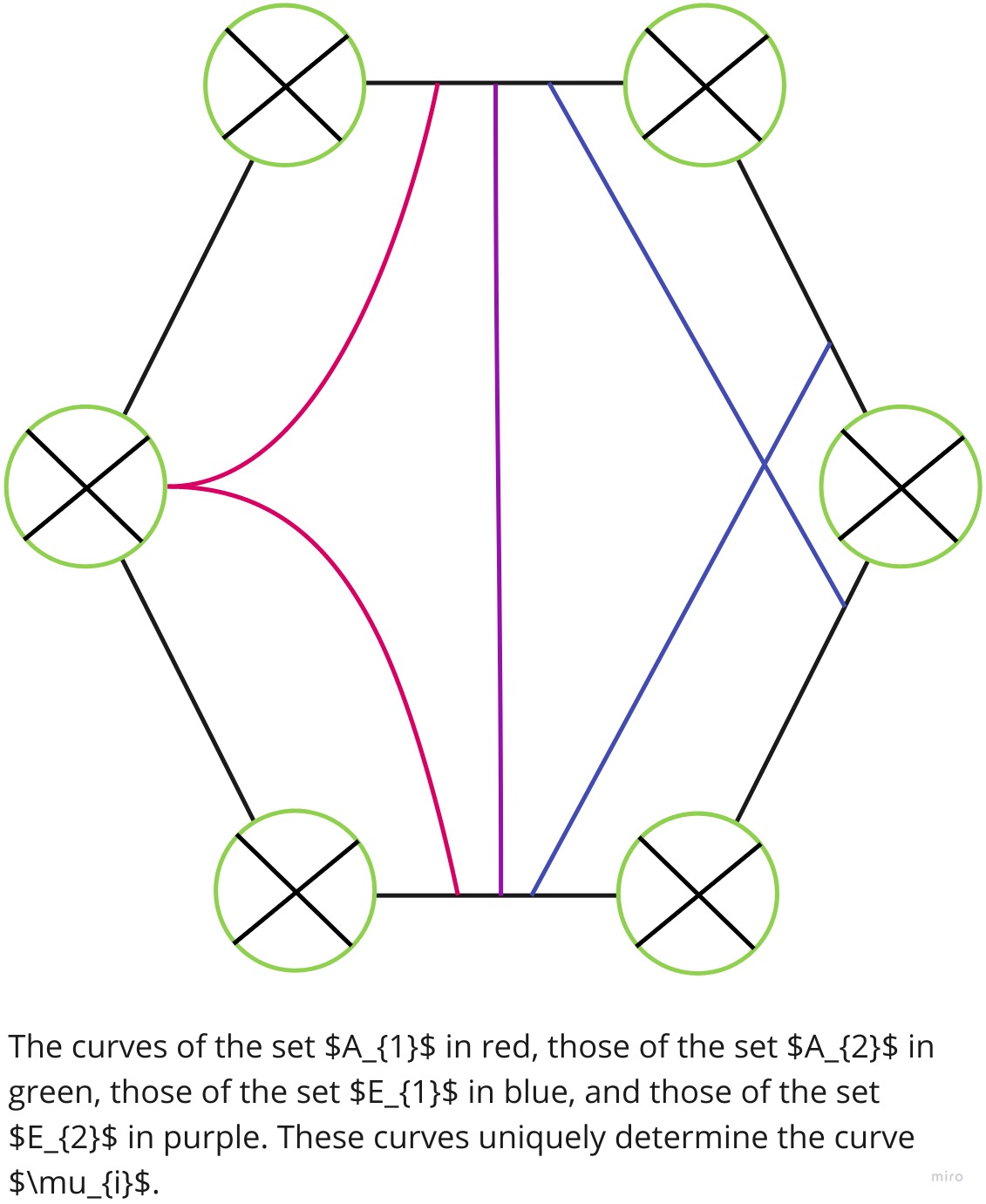}\caption{The curves of the set $A_{1}$ in green, those of the set $A_{2}$ in red, those of the set $E_{1}$ in blue, and those of the set $E_{2}$ in purple. These curves uniquely determine the curve $\mu^{1}_{2}$.}\label{fig:def-mui}
 \end{figure}

\subsubsection{The curves $\alpha_{i,j}$}\label{subsubsec:alphaij}

\begin{lemma}\label{lemma:alphaij}
 For all $1 \leq i,j \leq g$, we have that $\alpha_{i,j} \in V(\Y^{3})$.
\end{lemma}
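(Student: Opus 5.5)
We prove $\alpha_{i,j} \in V(\Y^{3})$ by finding, for each $\alpha_{i,j}$, a set of curves already known to lie in $V(\Y^{2})$ that uniquely determines it. Several cases are immediate. If $j = i$ or $j = i-1$, then $\alpha_{i,j}$ is isotopic to $\alpha_{i}$, which lies in $V(\Y^{1})$ by Lemma \ref{lemma:alphai}. If $j = i+1$ or $j = i-2$, then $\alpha_{i,j} \in \A \subset V(\Y)$. So we fix $i$ and assume $j \notin \{i-2, i-1, i, i+1\}$ modulo $g$. The available ingredients are:
\begin{itemize}
 \item the curves $\alpha_{k}$, which lie in $V(\Y^{1})$;
 \item the curves $\veps_{k,l}$, which lie in $V(\Y^{1})$ by Lemma \ref{lemma:vepsijv2};
 \item the set $\A$;
 \item the curves $\mu^{1}_{k}$, which lie in $V(\Y^{2})$.
\end{itemize}
Since every one of these is in $V(\Y^{2})$, any set of them that uniquely determines $\alpha_{i,j}$ places $\alpha_{i,j}$ in $V(\Y^{3})$.

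To build the determining set, we look at the complement of $\alpha_{i,j}$ in the polygon model. The segment $a_{i,j}$ cuts $H$, and $a^{\prime}_{i,j}$ cuts $H^{\prime}$, into two pieces. One piece contains the sides $e_{i}, a_{i+1}, e_{i+1}, \ldots, e_{j-1}$ (indices modulo $g$). The other contains $e_{j}, a_{j+1}, \ldots, e_{i-1}$. The boundary-type letters $a_{k}$ with $k$ strictly between $i$ and $j$ lie on one side, and those with $k$ strictly between $j$ and $i$ lie on the other. We take the following curves disjoint from $\alpha_{i,j}$:
\begin{itemize}
 \item all $\alpha_{k}$ with $k \neq i$;
 \item the curves $\veps_{k,l}$ whose segments stay on one side of $a_{i,j}$, meaning $k,l$ both in $\{i,\ldots,j-1\}$ or both in $\{j,\ldots,i-1\}$, chosen as a chain $\veps_{k,k+2}$ filling each side;
 \item the elements $\alpha_{k,k+1}$ and $\alpha_{k,k-2}$ of $\A$ whose indices keep them inside one of the two pieces;
 \item $\veps_{i,j}$ if essential, together with $\veps_{i-1,j-1}$ or a similar curve parallel to the two halves.
\end{itemize}
This mirrors the proofs of Lemmas \ref{lemma:vepsijv2} and \ref{lemma:alphai}, where sets of the form $A_1 \cup \dots \cup A_4$ cut the surface into pieces.

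Next we check that these curves cut $N$ into pieces which are pairs of pants, Möbius bands with one hole, or annuli around $\alpha_{i}$, together with one distinguished region. That region should be a one-holed Klein bottle or a Möbius band with a puncture-like boundary configuration containing the side $a_{i}$ and the two arcs adjacent to $e_{i-1}$ and $e_{j}$. Inside it, $\alpha_{i,j}$ must be the unique essential curve disjoint from all chosen curves. The role of the curves $\mu^{1}_{k}$, and of $\alpha_{i,i+1}$ and $\alpha_{i,i-2}$ as witnesses, is to kill the competitors. In a one-holed Klein bottle there are, up to isotopy, only two one-sided curves disjoint from a fixed two-sided curve, and infinitely many one-sided curves overall. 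We would choose one $\mu^{1}_{k}$, or a suitable $\alpha_{k,l}$ already shown to lie in $V(\Y^{1})$, that intersects every such curve except $\alpha_{i,j}$.

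If a direct one-step determination is hard for $j$ far from $i$, we would instead proceed by induction on the distance $|i-j|$. Starting from $\alpha_{i,i+1}$ and $\alpha_{i,i-2}$, we would show that $\alpha_{i,j+1}$ is determined by $\alpha_{i,j}$, $\alpha_{i}$ and $\veps$-curves. However, this risks using more than three expansions, so we prefer a uniform one-step argument using $\mu^{1}_{k} \in V(\Y^{2})$. This is presumably why the statement lands exactly in $\Y^{3}$.

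The main obstacle is the uniqueness check: verifying that the chosen set leaves exactly one candidate curve. This is especially delicate in the non-orientable distinguished region, where one-sided curves differing by crosscap slides both avoid the $\veps$- and $\alpha_{k}$-curves. We expect the $\mu^{1}_{k}$ to be precisely the curves that distinguish $\alpha_{i,j}$ from its images under such slides. The plan is to pick $k = j$ (or $k = j+1$) so that $\mu^{1}_{k}$ meets the region in an arc separating the two candidate one-sided curves, with $\alpha_{i,j}$ disjoint from it.
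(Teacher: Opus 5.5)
Your reduction to the case $j\notin\{i-2,i-1,i,i+1\}$ and your list of ingredients are fine. The gap is that you never fix a determining set or check uniqueness, and that check is the whole content of the lemma. The concrete choices you do suggest also fail.

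The right skeleton is: $\{\alpha_k\}_{k\neq i}$, the two chains $\veps_{i,i+2},\dots,\veps_{j-2,j}$ and $\veps_{j,j+2},\dots,\veps_{i-3,i-1}$, and the pair $\veps_{i,j},\veps_{i-1,j}$. Note that the second curve must be $\veps_{i-1,j}$, not $\veps_{i-1,j-1}$; the latter crosses $\alpha_{i,j}$. With this skeleton the two side pieces are filled. The piece between $\veps_{i,j}$ and $\veps_{i-1,j}$ is a projective plane with two holes around crosscap $i$, and it contains exactly two one-sided curves, $\alpha_{i,j}$ and $\alpha_i$, meeting once. (Inside it, $\veps_{i-1,j-1}$ is an arc cutting crosscap $i$ off from $\veps_{i-1,j}$, and only $\alpha_i$ misses that arc.) So the single competitor is the core $\alpha_i$, not a family of crosscap-slid curves in a one-holed Klein bottle.

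The missing idea is how to exclude $\alpha_i$ with a curve of level at most $2$ that is disjoint from $\alpha_{i,j}$. None of your candidates can do this.
\begin{itemize}
\item $\alpha_i$ is disjoint from every $\veps_{k,l}$ and from every $\alpha_k$ with $k\neq i$.
\item By the very definition of $\mu^{1}_{k}$, whose determining set contains $\{\alpha_l\}_{l\neq k}$, $\alpha_i$ is disjoint from every $\mu^{1}_{k}$ with $k\neq i$. In particular it is disjoint from $\mu^{1}_{j}$ and from $\mu^{1}_{j+1}$ (recall $i\neq j,j+1$).
\item $\alpha_{i,i+1}$ and $\alpha_{i,i-2}$ do meet $\alpha_i$, but they cannot be placed in the set. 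Every $\alpha_{i,l}$ has the same mod-$2$ homology class as $\alpha_i$, so any two of them meet an odd number of times; in particular they meet $\alpha_{i,j}$.
\end{itemize}
With your ingredients, $\alpha_i$ therefore always survives alongside $\alpha_{i,j}$, and uniqueness fails.

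The paper resolves this by using the auxiliary curve localized at crosscap $i$, namely $M=\{\mu^{1}_{i}\}$, which lies in $V(\Y^{2})$. This gives
$\alpha_{i,j}=\langle E_1\cup E_2\cup\{\veps_{i,j},\veps_{i-1,j}\}\cup\{\alpha_k\}_{k\neq i}\cup\{\mu^{1}_{i}\}\rangle$,
where $\mu^{1}_{i}$ is disjoint from $\alpha_{i,j}$ and is exactly what rules out $\alpha_i$.
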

\begin{proof}
 Let $1 \leq i,j \leq g$ be fixed. If $j \in \{i-2,i-1,i,i+1\}$ then $\alpha_{i,j}$ is either isotopic to $\alpha_{i}$ or is an element of $\A$. Thus, $\alpha_{i,j} \in V(\Y^{1})$.
 
 Otherwise, we define the following sets: $$E_{1} = \{\veps_{i,i+2}, \ldots, \veps_{j-2,j}\} \subset V(\Y^{1})$$ $$E_{2} = \{\veps_{j,j+2}, \ldots, \veps_{i-3,i-1}\} \subset V(\Y^{1})$$ $$E_{3} = \{\veps_{i,j}, \veps_{i-1,j}\} \subset V(\Y^{1}),$$ $$A = \{\alpha_{j}\}_{i \neq j} \subset V(\Y^{1}),$$ $$M= \{\mu^{1}_{i}\} \subset V(\Y^{2}).$$
 
 Then, we have that $\alpha_{i,j}=\langle E_{1} \cup E_{2} \cup E_{3} \cup A \cup M \rangle \in V(\Y^{3})$. See Figure \ref{fig:udet-alphaij}.
\begin{figure}[ht]
  \centering
  \includegraphics[height=4cm]{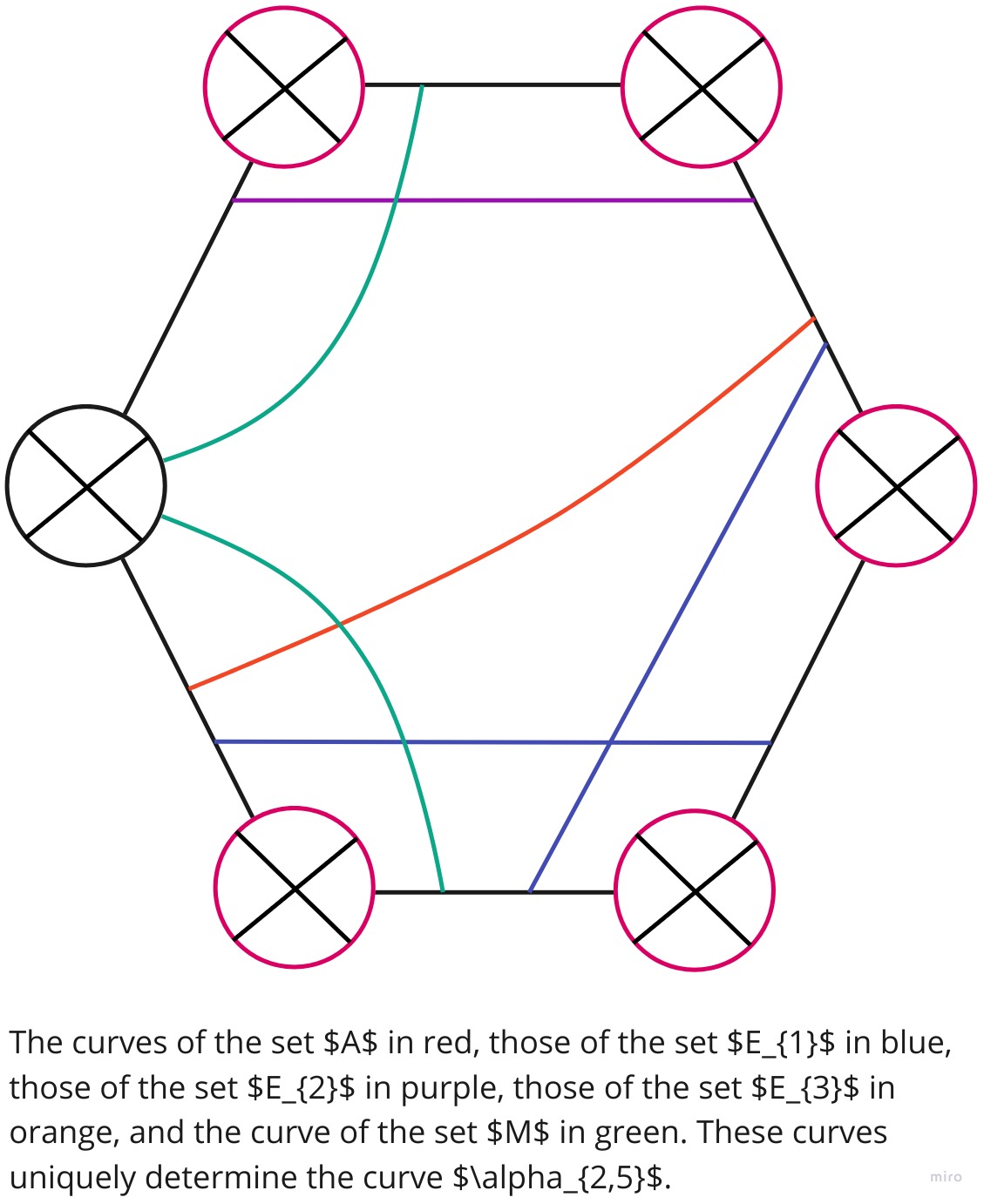}\caption{The curves of the set $A$ in red, those of the set $E_{1}$ in blue, those of the set $E_{2}$ in purple, those of the set $E_{3}$ in orange, and the curve of the set $M$ in green. These curves uniquely determine the curve $\alpha_{2,5}$ in fuchsia.}\label{fig:udet-alphaij}
 \end{figure}
\end{proof}

\subsubsection{The curves $\nu^{\pm}_{i,j}$}\label{subsubsec:nupmij}

\begin{lemma}\label{lemma:nupmij}
 For all $1 \leq i,j \leq g$ with $|i-j|\geq 2$ (modulo $g$), we have that $\nu^{+}_{i,j}, \nu^{-}_{i,j} \in V(\Y^{6})$.
\end{lemma}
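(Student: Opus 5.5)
We plan to obtain every $\nu^{\pm}_{i,j}$ from the single curve $\nu^{-}_{5,7}\in\M$ together with curves already known to lie in $\Y^{3}$. The level bound $6=3+3$ suggests the bookkeeping: three further rounds of unique determination on top of Lemma \ref{lemma:alphaij}. Throughout we use the curves $\alpha_{i,j}\in V(\Y^{3})$, $\veps_{i,j}\in V(\Y^{1})$ (Lemma \ref{lemma:vepsijv2}), $\alpha_i\in V(\Y^{1})$ (Lemma \ref{lemma:alphai}) and $\mu^1_i\in V(\Y^{2})$.

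\textbf{Step 1: $\nu^{+}_{i,j}$ with $j=i+2$.} First we determine a ``short'' curve, $\nu^{+}_{i,i+2}$ or $\nu^{-}_{i,i+2}$ for one convenient $i$, using only one-sided curves $\alpha_{k,l}$ and the curves $\veps_{k,l}$. The curve $\nu^{\pm}_{i,i+2}$ bounds, on one side, a subsurface containing the crosscaps $i,i+1$ in a twisted way, and its complement contains the remaining $g-2$ crosscaps. Disjoint curves are the following.
\begin{enumerate}
\item The $\alpha_k$ with $k\notin\{i,i+1\}$ (and suitable $\alpha_{k,l}$ with $k,l$ outside the window) fill the complement up to a pants-type decomposition.
\item The $\veps_{k,l}$ with both indices outside $\{i-1,\dots,i+2\}$.
\item One or two curves $\alpha_{i,l}$ or $\alpha_{i+1,l}$ that lie on the $H$ side only; these distinguish $\nu^+$ from $\nu^-$.
\end{enumerate}
We expect that these determine $\nu^{\pm}_{i,i+2}$ uniquely. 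Since the $\alpha_{k,l}$ lie in $\Y^3$, this gives level $4$.

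\textbf{Step 2: lengthening.} Next we obtain $\nu^{\pm}_{i,j}$ for general $j$ at level $5$. The curves determining $\nu^{\pm}_{i,j}$ in the pattern of Figure \ref{fig:def-nupmij} are:
\begin{itemize}
\item $\alpha_k$ for $k$ outside $\{i,\dots,j-1\}$ and inside it, suitably;
\item $\veps_{i,j}$ and $\veps_{i-1,j-1}$-type curves, which are disjoint from $\nu$ because they run parallel to its arcs in $H$ or $H'$;
\item one $\alpha_{k,l}$ that crosses the windows on exactly one side, breaking the $\pm$ symmetry;
\item the short curve from Step 1, when it is disjoint.
\end{itemize}
If Step 1 works for all $i$ directly, then both this step and Step 3 land within level $\le 6$.

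\textbf{Step 3: the other sign.} Finally we obtain the curves of the other sign $\nu^{\mp}$ by one more determination, using $\nu^{\pm}$ curves with disjoint supports. This is where the given $\nu^{-}_{5,7}$ may be needed. If the determination in Step 1 only pins down a pair $\{\nu^+_{i,i+2},\nu^-_{i,i+2}\}$, we instead start from $\nu^-_{5,7}\in\Y$. We determine $\nu^-_{k,k+2}$ for neighbouring $k$ by choosing $\nu^-_{5,7}$, the $\alpha$'s and the $\veps$'s; this works because $\nu^-_{5,7}$ fixes the ``side'' of the twisting. Then we propagate around the cycle of indices.

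\textbf{Main obstacle.} The main obstacle is uniqueness. A curve disjoint from all the chosen $\alpha$'s and $\veps$'s must live in a complementary subsurface that is a genus-two non-orientable piece (a Klein bottle with boundary). That piece contains several candidates: $\nu^+$, $\nu^-$, and a separating curve. The $\nu^{-}_{5,7}$ or $\mu^1_i$ curves are meant to kill the extras, and checking this carefully, figure by figure, is the delicate part.
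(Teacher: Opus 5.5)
Your proposal has a genuine gap, and it lies where you try to break the $\pm$ symmetry. In Steps 1 and 2 you want to tell $\nu^{+}_{i,j}$ from $\nu^{-}_{i,j}$ using curves $\alpha_{k,l}$, $\alpha_k$, $\veps_{k,l}$ (and possibly $\mu^{1}_{i}$), e.g.\ an $\alpha_{i,l}$ ``on the $H$ side only''. This is impossible.

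The label-preserving exchange $H\leftrightarrow H'$ is compatible with both gluings, so it induces an involution $\sigma$ of $N$. Each $\alpha_{k,l}$ and each $\veps_{k,l}$ consists of a segment in $H$ and the matching segment in $H'$, so $\sigma$ fixes all of them. Since $\sigma(\langle B\rangle)=\langle\sigma(B)\rangle$, induction shows that $\sigma$ fixes every vertex of every rigid expansion of $\A$; this includes $\mu^{1}_{i}$ and all the $\alpha_{k,l}$. But $\sigma(\nu^{+}_{i,j})=\nu^{-}_{i,j}$. So a $\sigma$-invariant set $B$ with $\langle B\rangle=\nu^{+}_{i,j}$ would also give $\langle B\rangle=\nu^{-}_{i,j}$. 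Equivalently, $i(\alpha_{k,l},\nu^{+}_{i,j})=i(\alpha_{k,l},\nu^{-}_{i,j})$ always. Hence your level-$4$ starting curve does not exist, and the count $6=3+3$ has nothing behind it. The only symmetry breaker in $\Y$ is $\nu^{-}_{5,7}$ (this is why $\M$ is part of $\Y$), and it has to be used from the very first step.

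Your fallback in Step 3 also fails, and some of your disjointness claims are wrong. The curve $\nu^{\pm}_{i,j}$ is the boundary of a neighbourhood of crosscaps $i$, $j$ and an arc joining them through $H$ (for $+$) or through $H'$ (for $-$). It therefore bounds a one-holed Klein bottle containing crosscaps $i$ and $j$, so $\nu^{\pm}_{i,i+2}$ contains $i$ and $i+2$, not $i,i+1$. Consequently:
\begin{itemize}
\item $\veps_{i,j}$ and $\veps_{i-1,j-1}$ separate crosscap $i$ from crosscap $j$, so they intersect $\nu^{\pm}_{i,j}$.
\item $\nu^{-}_{5,7}$ intersects every $\nu^{-}_{k,k+2}$ whose index pair shares an index with, or interleaves, $\{5,7\}$. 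So it cannot be used to determine the ``neighbouring'' curves.
\end{itemize}

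The missing idea is the paper's mechanism. Take $A(i,j)\subset V(\Y^{1})$ consisting of all $\alpha_k$, the curves $\veps_{i,j-1},\veps_{i-1,j}$, and $\E$ minus the four curves around crosscaps $i$ and $j$. The curves adjacent to all of $A(i,j)$ are exactly the curves of a four-holed sphere around crosscaps $i,j$, which is infinitely many candidates rather than a few. Now add one curve of the \emph{opposite} sign, $\nu^{\mp}_{k,l}$ with $\{k,l\}$ \emph{idsf} $\{i,j\}$. It is disjoint from $\nu^{\pm}_{i,j}$ but crosses that four-holed sphere, and this leaves $\nu^{\pm}_{i,j}$ as the only candidate. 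Alternating signs starting from $\nu^{-}_{5,7}$ then gives, in order:
\begin{itemize}
\item $\nu^{+}_{i,6}\in V(\Y^{2})$;
\item $\nu^{-}_{i,j}\in V(\Y^{3})$ for $6\notin\{i,j\}$;
\item $\nu^{+}_{i,j}\in V(\Y^{4})$ for $\{i,j\}\neq\{5,7\}$;
\item $\nu^{-}_{i,6}\in V(\Y^{5})$;
\item $\nu^{+}_{5,7}\in V(\Y^{6})$.
\end{itemize}
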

\begin{proof}
Let $1 \leq i,j \leq g$ with $|i-j|\geq 2$ (modulo $g$). In general, we prove that $\nu^{\pm}_{i,j}$ is uniquely determined by a set $A(i,j) \subset V(\Y^{1})$ and an auxiliary curve $\nu$. The former depends only on $i,j$ and the latter depends also of the sign of $\nu^{\pm}_{i,j}$. As such, we describe the set $A(i,j)$, and then use a table to point out what curve is $\nu$.

We define $A(i,j)$ as follows: $$A(i,j) = (\E \setminus \{\veps_{i-2,i},\veps_{i-1,i+1},\veps_{j-2,j},\veps_{j-1,j+1}\}) \cup \{\veps_{i,j-1},\veps_{i-1,j}\} \cup \{\alpha_{k} : 1 \leq k \leq g\} \subset V(\Y^{1}).$$

Before defining $\nu$, we need the following definition to facilitate the exposition.

Let $\mathfrak{i}: \{1, \ldots, g\} \to \mathbb{S}^{1}$ be defined as $k \mapsto \exp(\frac{2k\pi \sqrt{-1}}{g})$. Let also $1 \leq i,j,x,y \leq g$ with $|i-j|\geq 2$ (modulo $g$). We say that $x$ and $y$ are \textit{in different sides from} $i$ and $j$ if $\mathfrak{i}(x)$ and $\mathfrak{i}(y)$ are in different connected components of $\mathbb{S}^{1} \setminus \{\mathfrak{i}(i), \mathfrak{i}(j)\}$; we denote this as $\{x,y\}$ \emph{idsf} $\{i,j\}$.

We describe the curve $\nu$ in Table \ref{table:nu}, along with the rigid expansion to which $\nu^{\pm}_{i,j}$ belongs:

\begin{table}[ht]
 \centering
 \begin{tabular}{|c|c|c|}
  \hline
  $\nu^{\pm}_{i,j}$ & The curve $\nu$ & $m$ for $\nu^{\pm}_{i,j} \in V(\Y^{m})$\\ \hline
  $\nu^{+}_{i,6}$ & $\nu^{-}_{5,7}$ & $2$\\ \hline
  $\nu^{-}_{i,j}$ with $i \neq 6 \neq j$ & $\nu^{+}_{k,6}$ with $\{k,6\}$ \emph{idsf} $\{i,j\}$ & $3$\\ \hline
  $\nu^{+}_{i,j}$ with $\{i,j\} \neq \{5,7\}$ & $\nu^{-}_{k,l}$ with $\{k,l\}$ \emph{idsf} $\{i,j\}$ and $k \neq 6 \neq l$ & $4$\\ \hline
  $\nu^{-}_{i,6}$ & $\nu^{+}_{k,l}$ with $\{k,l\}$ \emph{idsf} $\{i,j\}$ and $\{k,l\}\neq\{5,7\}$ & $5$\\ \hline
  $\nu^{+}_{5,7}$ & $\nu^{-}_{k,l}$ with $\{k,l\}$ \emph{idsf} $\{5,7\}$ & $6$\\ \hline
 \end{tabular}
 \caption{The curves $\nu$ needed to unique determine $\nu^{\pm}_{i,j}$.}\label{table:nu}
\end{table}

See Figures \ref{fig:udet-nupmij}. Therefore, $\nu^{\pm}_{i,j} \in V(\Y^{6})$.
\begin{figure}[ht]
  \centering
  \includegraphics[height=4cm]{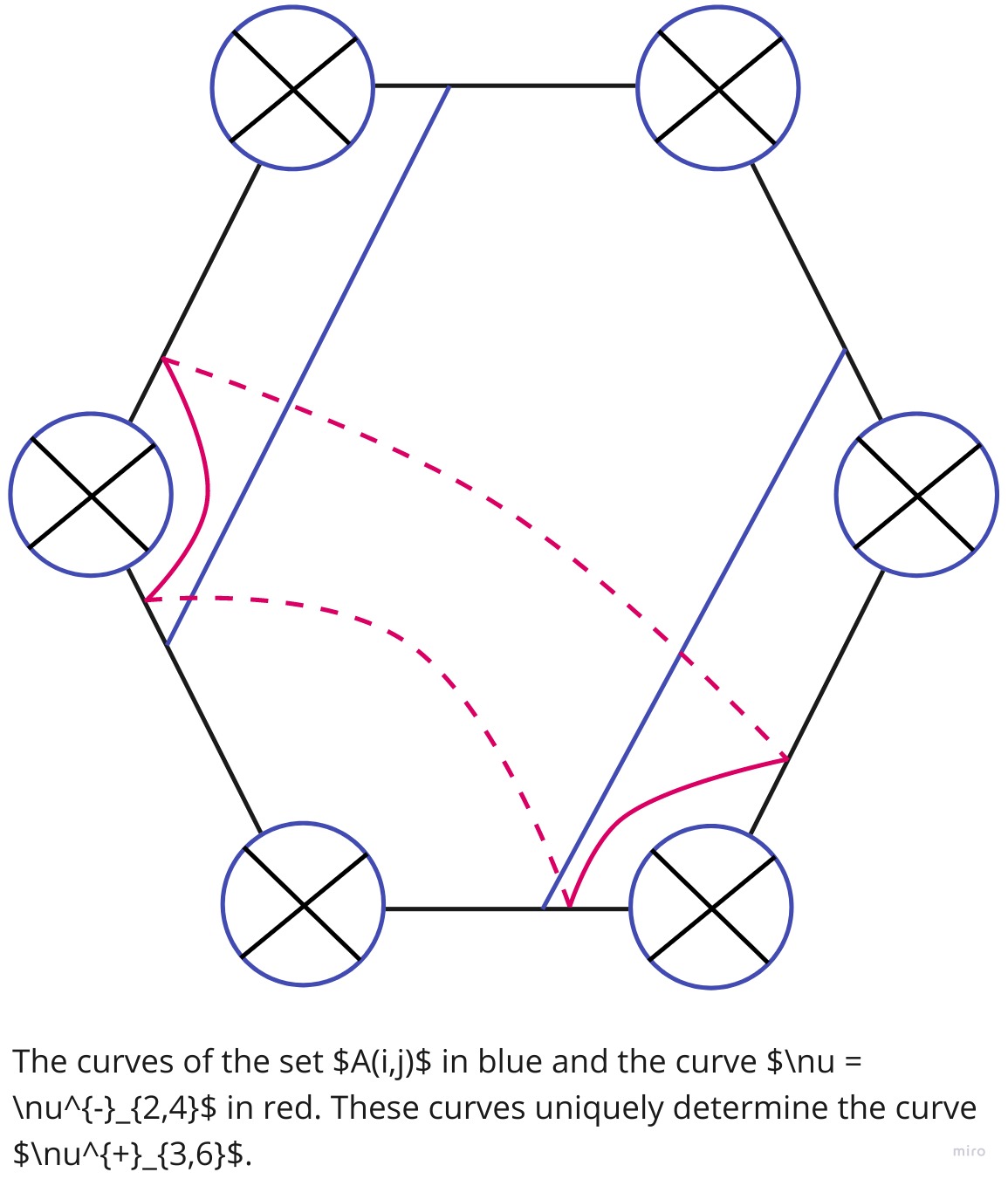}\caption{The curves of the set $A(i,j)$ in blue and the curve $\nu = \nu^{-}_{2,4}$ in red. These curves uniquely determine the curve $\nu^{+}_{3,6}$ in green.}\label{fig:udet-nupmij}
 \end{figure}
\end{proof}

\section{Translations of $\Y$ using generators}\label{sec:GenSetY}

In this section we review Szepietowski's generating set for $\mcg{N}$ (see \cite{Szepietowski}), and see that (after symmetrising this generating set) any translation of $\Y$ by elements of the generating set is contained in some rigid expansion of $\Y$ (see Lemmata \ref{lemma:dtgammaiA}, \ref{lemma:dtgammaiM}, \ref{lemma:y(Y)} and \ref{lemma:taubeta(Y)}). This last part is to apply Lemma \ref{lemma:groupYinY}.

\subsection{Szepietowski generating set}\label{subsec:GenSet}

In Theorem 3 of \cite{Szepietowski}, Szepietowski gives a generating set of $\mcg{N}$, but to present it we need to define some curves and notations.

Given a two-sided curve $\alpha$, we denote by $\dt{\alpha}$ the (right) Dehn twist along $\alpha$.

Let $K$ be a Klein bottle embedded in $N$ with boundary curve $\eta$. Since $N = N_{g}$ with $g \geq 5$, we have that $\eta$ is always essential. We denote by $y_{K}$ the Y-homeomorphism in $K$ introduced in \cite{Lickorish}, extended as the identity in the complement of $N$. Note that $y_{K}^{2} = \dt{\eta}$. Another way to describe $y_{K}$ is to say that $y_{K}$ is the crosscap slide of one of the crosscaps in $K$ into the other. See Figure \ref{fig:def-Y-Homeomorphism}.

\begin{figure}[ht]
  \centering
  \includegraphics[height=3.5cm]{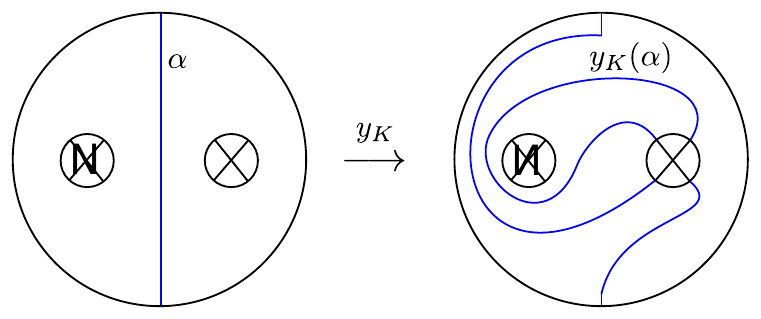}\caption{The Y-homeomorphism.}\label{fig:def-Y-Homeomorphism}
 \end{figure}

For each $1 \leq i \leq g$, we define the $c_{i}$ and $c^{\prime}_{i}$ as the linear segments with endpoints $(a_{i},a_{i+1})$ and $(a^{\prime}_{i}, a^{\prime}_{i+1})$ respectively (subindices modulo $g$). Then, we define the curve $\gamma_{i}$ as the curve in $N$ obtained by projecting $c_{i} \cup c^{\prime}_{i}$. See Figure \ref{fig:def-gammaibeta}. Note that $\gamma_{i}$ is always two-sided.

Let $b$ be the linear segment in $H$ with endpoints $((e_{g})_{+}, (e_{4})_{-})$ where $( \cdot )_{-}$ denotes the origin point and $( \cdot )_{+}$ denotes the terminal point of the respective segments. Then, we define the curve $\beta$ as the curve in $N$ obtained by projecting $e_{1} \cup e_{2} \cup e_{3} \cup b$. See Figure \ref{fig:def-gammaibeta}. Note that $\beta$ is a two-sided curve.

\begin{figure}[ht]
  \centering
  \includegraphics[height=4cm]{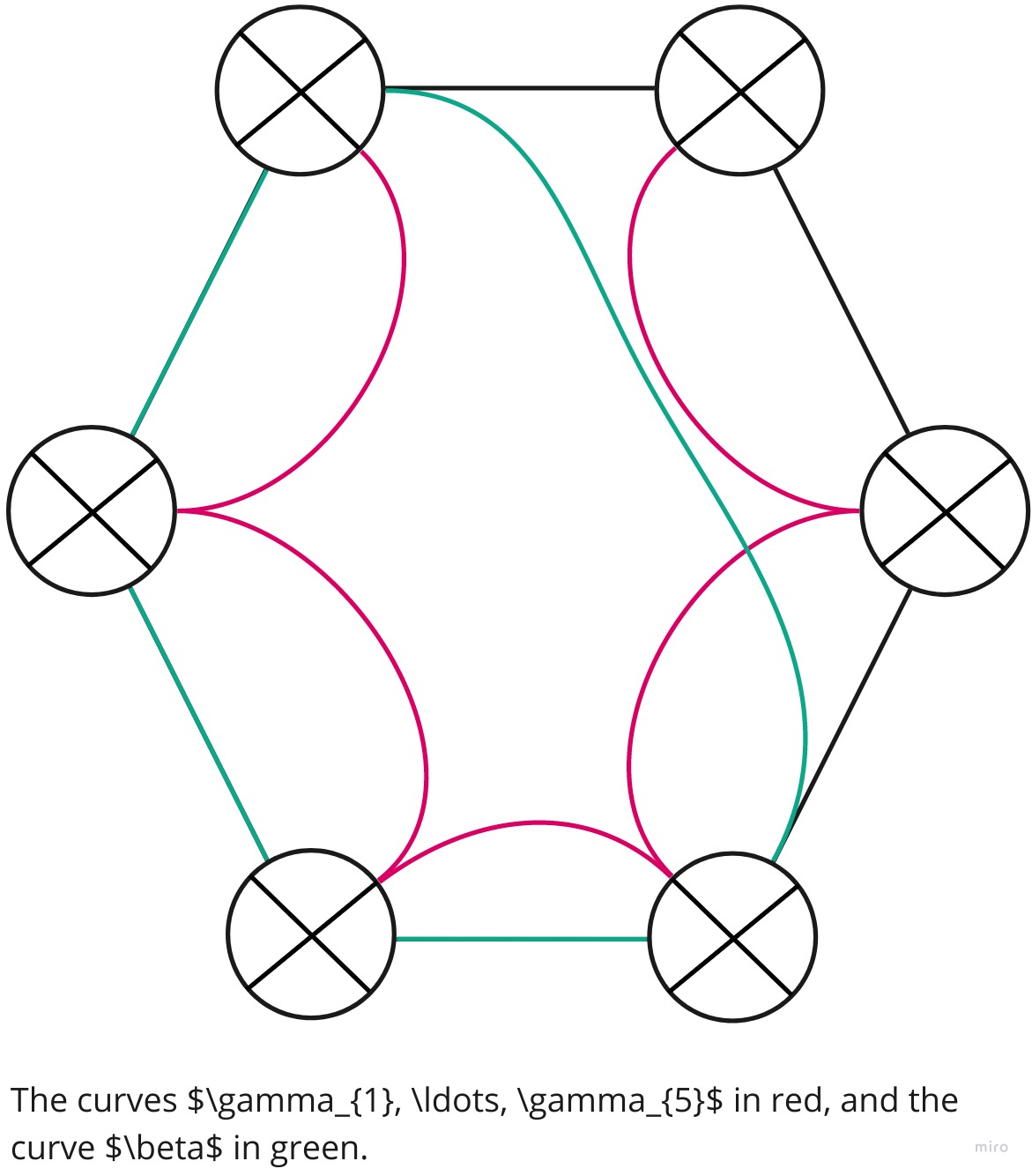}\caption{The curves $\gamma_{1}, \ldots, \gamma_{5}$ in red, and the curve $\beta$ in green.}\label{fig:def-gammaibeta}
 \end{figure}

Szepietowski proved the following result, presented here in the context of $g \geq 5$ and using our notation:

\begin{theorem}[3 in \cite{Szepietowski}]\label{thm:SzepietowskiGenSet}
 Let $K$ be the Klein bottle bounded by $\veps_{g-2,g}$, and $y = y_{K}$ be the corresponding Y-homeomorphism. If $g \geq 5$, the set $$C = \{\dt{\gamma_{i}} : 1 \leq i \leq g-1\} \cup \{\dt{\beta}\} \cup \{y\},$$ is a generating set of $\mcg{N_{g}}$.
\end{theorem}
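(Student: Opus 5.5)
This statement is Szepietowski's Theorem 3 from \cite{Szepietowski}, rewritten in our model and notation. The plan is therefore not to reprove the generation result. Instead, we will show that our curves $\gamma_{1},\ldots,\gamma_{g-1}$, $\beta$ and the Klein bottle $K$ bounded by $\veps_{g-2,g}$ are the images, under a single homeomorphism $h$ of $N$, of the curves and Klein bottle in Szepietowski's statement. We then use that conjugating a generating set by a fixed mapping class gives a generating set: if $C_{0}$ generates $\mcg{N_{g}}$, so does $hC_{0}h^{-1}$. Moreover $h\dt{c}h^{-1}=\dt{h(c)}^{\pm1}$ and $h y_{K_{0}} h^{-1} = y_{h(K_{0})}^{\pm 1}$. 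Since the signs do not affect generation, it suffices to match the configurations.

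First, I will fix the standard model. In Szepietowski's picture, $N_{g}$ is a sphere with $g$ crosscaps $\mu_{1},\ldots,\mu_{g}$ in a row. The generators are:
\begin{itemize}
 \item the twists about the curves $a_{i}$, each enclosing the crosscaps $\mu_{i},\mu_{i+1}$;
 \item the twist about a curve $b$ enclosing $\mu_{1},\ldots,\mu_{4}$;
 \item a crosscap slide supported in the Klein bottle that contains $\mu_{g-1},\mu_{g}$.
\end{itemize}
In our model, the $g$ boundary components of the planar surface $P$, glued along $a_{i}^{-1}\sim a_{i}^{\prime}$, are precisely $g$ crosscaps, arranged cyclically. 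The cores of these crosscaps are the one-sided curves $\alpha_{i}$.

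Next I will check the curves one by one. The segments $c_{i}\cup c_{i}^{\prime}$ project to a curve that bounds, in $P$, a disc containing exactly the two consecutive boundary components $a_{i},a_{i+1}$. So $\gamma_{i}$ bounds a Klein bottle with one boundary containing the crosscaps $i,i+1$, as $a_{i}$ does. Similarly, $\veps_{g-2,g}$ cuts off the crosscaps $g-1,g$, so $K$ is the Klein bottle containing the last two crosscaps. The curve $\beta$ is made of $e_{1}\cup e_{2}\cup e_{3}\cup b$. I will check that it encloses exactly the crosscaps $1,\ldots,4$: one side of $\beta$ is a genus-$4$ non-orientable piece with one boundary, and the other side has genus $g-4$. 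I will also check that $\beta$ meets each $\gamma_{i}$ the way $b$ meets each $a_{i}$. Concretely, $\beta$ should be disjoint from $\gamma_{i}$ for $i\neq 4$, and $i(\beta,\gamma_{4})=2$, or whichever pattern Szepietowski's figure prescribes.

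Having matched everything, I will apply the change-of-coordinates principle. I cut $N$ along the collection $\gamma_{1},\ldots,\gamma_{g-1},\beta$ and along $\veps_{g-2,g}$, and compare the resulting pieces and their gluing pattern with the corresponding cutting of Szepietowski's model. This produces a homeomorphism $h$ carrying one configuration onto the other, and conjugation by $h$ finishes the proof. The hypothesis $g\geq 5$ ensures that $\beta$ and $\eta=\veps_{g-2,g}$ are essential and that the configuration is the one in \cite{Szepietowski}.

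The main obstacle is this matching step, specifically for $\beta$. Its defining arc runs from $(e_{g})_{+}$ to $(e_{4})_{-}$, and I must verify the exact intersection pattern of $\beta$ with $\gamma_{3},\gamma_{4}$ and on which side the crosscaps lie. The cyclic labelling of our polygon versus the linear labelling in Szepietowski's picture must also be reconciled; a possible reflection is harmless, since it only inverts generators. I would first try drawing everything in the crosscap-disc picture obtained by collapsing $H^{\prime}$.
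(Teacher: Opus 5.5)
Your overall strategy matches what the paper does. The paper gives no argument for this statement: it cites \cite{Szepietowski} and asserts the translation into its notation, and you likewise take the theorem as given and transport it by a homeomorphism $h$ and conjugation. The problem is the matching step, which you rightly call the whole proof: it is set up with misidentified curves, and as written it would fail.

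The segments $c_i$ and $c_i'$ do not form a closed curve in $P$. They are arcs in $H$ and $H'$ joining boundary components $i$ and $i+1$ of $P$, and they close up only after the identifications $a_i^{-1}\sim a_i'$. So $\gamma_i$ passes once through each of the crosscaps $i$ and $i+1$. It is two-sided and non-separating; the paper later uses $\gamma_1$ as the representative of the orbit of two-sided non-separating curves. The curve that cuts off crosscaps $i,i+1$ is $\veps_{i-1,i+1}$, which is why $K$ is bounded by $\veps_{g-2,g}$, and $\gamma_{g-1}$ is the two-sided non-separating curve inside $K$.

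Likewise, $e_1,e_2,e_3,b$ are arcs joining boundary components $1\to2\to3\to4\to1$. So $\beta$ runs once through each of the crosscaps $1,\dots,4$, is non-separating, and does not cut off an $N_{4,1}$. Write $x_k$ for the $\mathbb{Z}/2$-homology class of $\alpha_k$, so $x_k\cdot x_l=\delta_{kl}$. Then $[\gamma_i]=x_i+x_{i+1}$ and $[\beta]=x_1+x_2+x_3+x_4$, so $[\beta]\cdot[\gamma_4]=1$. Drawing the curves in the polygons, $b$ crosses $c_4$ once. Two curves passing through the same crosscap need not meet there, so they meet nowhere else. The pattern to match is therefore $i(\beta,\gamma_4)=1$, with $\beta$ disjoint from $\gamma_i$ for the other $i\le g-1$, not $2$.

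The same misreading affects your description of the source. In Szepietowski's picture, $a_i$ passes through $\mu_i,\mu_{i+1}$ and $b$ passes through $\mu_1,\dots,\mu_4$. This gives a chain $a_1,\dots,a_{g-1}$ with $b$ meeting $a_4$ once.

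The configuration you describe cannot be the right one. Twists about separating curves act trivially on $H_1(N;\mathbb{Z}/2)$. So your set would generate a subgroup whose image in $\mathrm{Aut}(H_1(N;\mathbb{Z}/2))$ is cyclic, generated by the image of $y$. But $\dt{\gamma_1}$ and $\dt{\gamma_2}$ act by non-commuting transvections along $x_1+x_2$ and $x_2+x_3$, so that set could not generate $\mcg{N_g}$.

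With the correct identification, your change-of-coordinates argument goes through:
\begin{itemize}
\item a chain of curves through consecutive crosscaps;
\item $\beta$ through the first four crosscaps, braiding with $\gamma_4$;
\item $K$ the Klein bottle around the last two crosscaps, containing $\gamma_{g-1}$.
\end{itemize}

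A minor point: $h y_{K_0}h^{-1}$ is a crosscap slide supported in $h(K_0)=K$, but it may differ from $y_K^{\pm1}$ by a power of $\dt{\gamma_{g-1}}$. This is harmless because $\dt{\gamma_{g-1}}\in C$.
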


We define $\G$ as the symmetrised generating set of $\mcg{N}$ obtained from $C$.

\subsection{$\dt{\gamma_{i}}^{\pm 1}(\Y) \subset \Y^{11}$}\label{subsec:dtgammaiY}

In this subsection we prove the following lemmas.

\begin{lemma}\label{lemma:dtgammaiA}
 For all $1 \leq i \leq g-1$ we have that $\dt{\gamma_{i}}^{\pm 1}(\A) \subset V(\Y^{8})$.
\end{lemma}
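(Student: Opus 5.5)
We argue curve by curve. Most elements of $\A$ are disjoint from $\gamma_{i}$ and hence fixed by $\dt{\gamma_{i}}^{\pm 1}$; the few that cross $\gamma_{i}$ are shown to be uniquely determined by curves already known to lie in $\Y^{7}$. Since $\gamma_{i}$ joins the sides $a_{i}$ and $a_{i+1}$, it is disjoint from every $\alpha_{k,l}$ with $k \notin \{i,i+1\}$. So these curves are fixed, and lie in $\Y \subset \Y^{8}$. The only curves of $\A$ that may intersect $\gamma_{i}$ are $\alpha_{i,i+1}$, $\alpha_{i,i-2}$, $\alpha_{i+1,i+2}$ and $\alpha_{i+1,i-1}$. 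Each of these is one-sided and meets $\gamma_{i}$ exactly once. Thus $\dt{\gamma_{i}}^{\pm 1}(\alpha_{i,j})$ is again a one-sided curve, namely the band sum of $\alpha_{i,j}$ with $\gamma_{i}$ (with the appropriate orientation).

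The key step is to give, for each of these four curves $\delta$, a set $B \subset V(\Y^{7})$ with $\langle B \rangle = \dt{\gamma_{i}}^{\pm1}(\delta)$. We would not find such sets from scratch. Instead we transport the determining sets already constructed in Lemmas \ref{lemma:vepsijv2}, \ref{lemma:alphai} and \ref{lemma:alphaij} and in the definition of $\mu^{1}_{i}$. Because $\dt{\gamma_{i}}$ is a homeomorphism, $\dt{\gamma_{i}}^{\pm1}(\delta) = \langle \dt{\gamma_{i}}^{\pm1}(A) \rangle$ whenever $\delta = \langle A \rangle$. We therefore choose $A$ so that most of its elements are disjoint from $\gamma_{i}$ and are fixed.

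For example, for $\alpha_{i,i+1}$ we would use the set of Lemma \ref{lemma:alphaij}. The $\alpha_{k}$ with $k \neq i,i+1$ are fixed. The curves $\veps_{k,k+2}$ are fixed whenever $\gamma_{i}$ crosses neither the side $e_{k}$ nor $e_{k+2}$, and this holds because $\gamma_{i}$ runs along the $a$-sides between $e_{i}$ and $e_{i+1}$. The only remaining elements are $\alpha_{i}$, $\alpha_{i+1}$ and $\mu^{1}_{i}$, whose twisted images must themselves be placed in low rigid expansions.

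This gives an intermediate step: prove that $\dt{\gamma_{i}}^{\pm1}(\alpha_{i})$ and $\dt{\gamma_{i}}^{\pm1}(\alpha_{i+1})$ lie in $\Y^{5}$ or $\Y^{6}$. We would do this with determining sets built from the $\veps_{k,l}$ (which are in $\Y^{1}$), the $\alpha_{k,l}$ (which are in $\Y^{3}$), and, where two-sided curves are needed to rule out the other band sum, the $\nu^{\pm}_{k,l}$ (which are in $\Y^{6}$). Alternatively, we would identify these images directly with curves of the form $\alpha_{k,l}$, up to a homeomorphism preserving the model, which puts them in $\Y^{3}$. The images of the auxiliary curves then sit in $\Y^{7}$, and adding the final determination step gives the bound $\Y^{8}$. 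We would use the $\nu$ curves whenever two candidate curves (for instance the two band sums, for $\dt{\gamma_{i}}$ versus $\dt{\gamma_{i}}^{-1}$) satisfy the same disjointness conditions from the $\veps$ and $\alpha$ curves. One of the $\nu^{\pm}_{k,l}$ intersects one candidate and not the other, and this is what forces the use of level $6$.

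The main obstacle is verifying uniqueness in each determination: showing that the complement of the chosen curves consists of pieces (pairs of pants, Möbius bands with one hole, or a single one-holed Klein bottle / annulus region) that contain exactly one essential curve not already in the set. We would handle this by cutting $N$ along the chosen set and checking that every complementary component is a pair of pants or a Möbius band with a hole, except for one component containing exactly one admissible curve. Indices near $g$ and $1$ require checking the wraparound cases separately, since $\gamma_{g-1}$ is adjacent to $\beta$ and $K$, but the argument is symmetric under the rotation of the indices, so a single representative value of $i$ suffices together with the $\pm$ sign cases.
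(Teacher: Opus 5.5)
There is a genuine gap, and it starts with your list of moved curves. The claim that $\gamma_i$ is disjoint from every $\alpha_{k,l}$ with $k\notin\{i,i+1\}$ is false. In $H$ the chord $c_i$ from $a_i$ to $a_{i+1}$ cuts off the side $e_i$ (and $c'_i$ cuts off $e'_i$ in $H'$), so every segment from some $a_k$, $k\neq i,i+1$, to $e_i$ crosses it. In $\A$ this affects $\alpha_{i-1,i}$ and $\alpha_{i+2,i}$. In $P$, the arc underlying $\alpha_{i-1,i}$ (resp. $\alpha_{i+2,i}$) encloses the $i$-th (resp. $(i+1)$-st) hole but not the other. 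So it separates the two crosscaps joined by $\gamma_i$ and meets $\gamma_i$ essentially, twice; the count is even because its $\mathbb{Z}_2$-class is that of crosscap $i-1$, resp. $i+2$. These curves are not fixed, and the paper needs a separate argument for them: Lemma \ref{lemma:dtgammaiAp3}, with a second family of auxiliary curves $\gamma'_j\in V(\Y^4)$ built from $\mu^1_j,\mu^1_{j+2},\alpha_{j,j\pm2}$, and the identity $\dt{\gamma_i}^{-1}(\alpha_{i-1,i})=\dt{\gamma_{i-1}}(\alpha_{i,i+1})$. Your criterion for the $\veps_{k,k+2}$ being fixed is also vacuous, since $\gamma_i$ meets no $e$-side at all. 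In fact $\veps_{k,l}$ is the separating curve around crosscaps $k+1,\dots,l$, and it meets $\gamma_i$ exactly when it separates crosscap $i$ from crosscap $i+1$. Hence $\veps_{i-2,i}$, $\veps_{i,i+2}$ and the $\veps_{i,j}$ of Lemma \ref{lemma:alphaij} also move.

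Second, the transport step is circular at this stage, and the actual content of the lemma is not supplied. Every determining set you propose to transport (Lemmas \ref{lemma:vepsijv2}, \ref{lemma:alphai}, \ref{lemma:alphaij}, the definition of $\mu^1_i$) bottoms out in $\A$. In particular $\mu^1_i$ is determined by a set containing $\alpha_{i,i+1}$ and $\alpha_{i,i-2}$, so placing $\dt{\gamma_i}^{\pm1}(\mu^1_i)$ by transport presupposes exactly the images you are trying to place. In the paper, the images of $\alpha_j$, $\veps_{j,k}$ and $\mu^1_j$ are corollaries of this lemma, not inputs. Also, Lemma \ref{lemma:alphaij} gives no determining set for $\alpha_{i,i+1}$ at all.

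Your fallback of recognising images as $\alpha_{k,l}$'s does work for $\alpha_i,\alpha_{i+1}$. In the one-holed Klein bottle bounded by $\veps_{i-1,i+1}$, the one-sided curves form a chain $\ldots,\alpha_{i+1,i-1},\alpha_i,\alpha_{i+1},\alpha_{i,i+1},\dt{\gamma_i}(\alpha_{i,i+1}),\ldots$ with consecutive curves disjoint, and $\dt{\gamma_i}$ acts on it as a shift by one. The same picture shows where the difficulty lies. Apart from $\veps_{i-1,i+1}$, the only curves in that Klein bottle disjoint from $\dt{\gamma_i}(\alpha_{i,i+1})$ are its two chain neighbours. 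So any determining set for it must contain the next chain element or a curve crossing $\veps_{i-1,i+1}$; otherwise $\alpha_{i+1}$ would be a second candidate. Your proposal gives no non-circular way to place such a curve.

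The paper resolves this by determining the images directly rather than transporting. It introduces auxiliary curves $\mu^2_j\in V(\Y^4)$, $\vkap^1_i=\dt{\gamma_i}^{-1}(\alpha_{i,i-2})\in V(\Y^5)$, $\gamma_i\in V(\Y^5)$, $\eta=\langle\{\gamma_k\}_k\rangle\in V(\Y^6)$ and $\gamma'_i\in V(\Y^7)$, which gives $\dt{\gamma_i}(\alpha_{i,i+1})\in V(\Y^8)$. It also uses identities such as $\dt{\gamma_i}^{-1}(\alpha_{i+1,i+2})=\alpha_{i,i+2}$ and $\dt{\gamma_i}^{-1}(\alpha_{i,i+1})=\alpha_{i+1}$. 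The $\nu^{\pm}$ curves play no role here, so your explanation of the exponent $8$ does not match the actual mechanism either.
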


\begin{lemma}\label{lemma:dtgammaiM}
 For all $1 \leq i \leq g-1$ we have that $\dt{\gamma_{i}}^{\pm 1}(\M) \subset V(\Y^{11})$.
\end{lemma}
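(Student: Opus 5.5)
The plan is to reduce to the few indices $i$ for which $\gamma_{i}$ actually meets $\nu^{-}_{5,7}$. For those indices I will write $\dt{\gamma_{i}}^{\pm 1}(\nu^{-}_{5,7})$ as the unique vertex determined by a set of curves, each of which is known to lie in $V(\Y^{10})$.

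\textbf{Step 1 (trivial indices).} In $H$ the curve $\nu^{-}_{5,7}$ consists of an arc from $e_{4}$ to $e_{5}$, which cuts off the side $a_{5}$, and an arc from $e_{6}$ to $e_{7}$, which cuts off $a_{7}$. In $H^{\prime}$ it consists of an arc from $e^{\prime}_{5}$ to $e^{\prime}_{6}$, which cuts off $a^{\prime}_{6}$, and an arc from $e^{\prime}_{4}$ to $e^{\prime}_{7}$, which separates $a^{\prime}_{5},a^{\prime}_{6},a^{\prime}_{7}$ from the other sides. The segment $c_{i}$ joins $a_{i}$ to $a_{i+1}$. Comparing these pictures, I expect $i(\gamma_{i},\nu^{-}_{5,7})=0$ for all $i \notin \{4,5,6,7\}$. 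For such $i$ we get $\dt{\gamma_{i}}^{\pm 1}(\nu^{-}_{5,7}) = \nu^{-}_{5,7} \in V(\Y)$, so only $i \in \{4,5,6,7\}$ and the two signs remain.

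\textbf{Step 2 (transport a determining configuration).} For the remaining cases I do not translate the determining set of $\nu^{-}_{5,7}$ from Lemma \ref{lemma:nupmij} directly. That set involves $\nu^{+}_{k,6}$, whose own determination uses $\nu^{-}_{5,7}$, so the argument would be circular. Instead I will use the shape of that determining set: $A(5,7)$ together with one auxiliary $\nu$-curve on the other side. Since $\dt{\gamma_{i}}^{\pm 1}$ acts by automorphisms, $\dt{\gamma_{i}}^{\pm 1}(\nu^{-}_{5,7}) = \langle \dt{\gamma_{i}}^{\pm 1}(A(5,7)) \cup \{\dt{\gamma_{i}}^{\pm 1}(\nu)\}\rangle$. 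The set $A(5,7)$ consists of the $\veps$'s and $\alpha_{k}$'s, which lie in $V(\Y^{1})$ by Lemmata \ref{lemma:vepsijv2} and \ref{lemma:alphai}. Lemma \ref{lemma:dtgammaiA} gives $\dt{\gamma_{i}}^{\pm 1}(\A) \subset V(\Y^{8})$, and $\dt{\gamma_{i}}^{\pm 1}$ fixes $\nu^{-}_{5,7}$ for the trivial indices. Arguing as in the proof of Lemma \ref{lemma:gensetYinYk+1} then gives $\dt{\gamma_{i}}^{\pm 1}(A(5,7)) \subset V(\Y^{9})$. Many elements of $A(5,7)$ are disjoint from $\gamma_{i}$ and so are fixed, already lying in $V(\Y^{1})$.

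\textbf{Step 3 (the auxiliary curve; main obstacle).} It remains to choose the auxiliary curve $\nu$ so that its image lies in $V(\Y^{10})$ and the uniqueness is preserved. My first attempt will be to pick $\nu=\nu^{\pm}_{k,l}$, with $\{k,l\}$ idsf $\{5,7\}$ and of the sign required by Table \ref{table:nu}, such that $\gamma_{i}$ is disjoint from $\nu$. Then $\dt{\gamma_{i}}^{\pm 1}(\nu)=\nu \in V(\Y^{6})$ by Lemma \ref{lemma:nupmij}. Note that $\{k,l\}$ must separate $5$ from $7$ on the circle, so one index is $6$; this may force an intersection with $\gamma_{5}$ or $\gamma_{6}$.

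If that fails, I will replace $\nu$ by a curve of $\mu$-type (such as the $\mu^{m}_{j}$ used in the proof of Lemma \ref{lemma:dtgammaiA}) or by an $\alpha_{j,k}$. These lie in $V(\Y^{2})$ or $V(\Y^{3})$. I would choose it so that its image under $\dt{\gamma_{i}}^{\pm 1}$ is either again a curve of that family, or is determined by images of $\Y^{1}$-curves, and in either case lies in $V(\Y^{10})$. Taking the whole determining set inside $V(\Y^{10})$ then gives $\dt{\gamma_{i}}^{\pm 1}(\nu^{-}_{5,7}) \in V(\Y^{11})$.

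The real difficulty is checking uniqueness: the curves involved are twisted images, so the figure-based argument of Lemma \ref{lemma:nupmij} has to be redone. I would handle it by applying $\dt{\gamma_{i}}^{\mp 1}$ to the whole configuration. Uniqueness is invariant under homeomorphisms, so it reduces to checking that $A(5,7)\cup\{\dt{\gamma_{i}}^{\mp1}(\nu)\}$ determines $\nu^{-}_{5,7}$. I would do this by cutting $N$ along $A(5,7)$ as in Lemma \ref{lemma:nupmij}, leaving a small complementary piece in which only a few curves fit. I would then check, case by case over $i\in\{4,5,6,7\}$ and the two signs, that the auxiliary curve excludes all of them except $\nu^{-}_{5,7}$.
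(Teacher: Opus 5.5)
\textbf{There is a genuine gap at $i\equiv 5,6$, and the fallback you propose for those cases cannot work.}

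Steps 1 and 2, and the first attempt in Step 3, are sound and match the paper. For $i\equiv 4$ the curve $\nu^{+}_{6,8}$ is disjoint from $\gamma_4$, and for $i\equiv 7$ the curve $\nu^{+}_{4,6}$ is disjoint from $\gamma_7$. In both cases $A(5,7)\cup\{\nu\}$ determines $\nu^{-}_{5,7}$, which gives $V(\Y^{10})$. For $i\equiv 5,6$, as you suspected, every admissible $\nu^{+}_{k,6}$ meets $\gamma_i$.

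The fallback fails for every choice of curve, by a symmetry argument. The model is symmetric under the involution $\iota$ of $N$ that exchanges $H$ and $H'$; it respects both gluings.
\begin{itemize}
\item $\iota$ fixes every $\alpha_{j,k}$ and every $\veps_{j,k}$. Since $\iota(\langle S\rangle)=\langle\iota(S)\rangle$, it also fixes every curve obtained from these by unique determination: all of $A(5,7)$, and all $\mu^{1}_{j}$, $\mu^{2}_{j}$, $\alpha_{j,k}$.
\item Comparing the definitions of $n^{\pm}_{i,j}$ and $(n^{\pm})'_{i,j}$, $\iota$ sends $\nu^{-}_{5,7}$ to $\nu^{+}_{5,7}\neq\nu^{-}_{5,7}$.
\item Hence for any such $\nu$, both $\nu^{\pm}_{5,7}$ are adjacent to every vertex of $A(5,7)\cup\{\nu\}$. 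Uniqueness fails, and no case-by-case cutting argument can rescue it. This symmetry is exactly why $\M$ has to be added to $\A$.
\end{itemize}
So any determining set for $\nu^{-}_{5,7}$ must contain a curve not fixed by $\iota$, in practice another $\nu$-curve. For $i\equiv 5,6$ you must then know where its twisted image lies. Separately, your last paragraph has a slip: pulling back $\dt{\gamma_i}^{\pm1}(A(5,7)\cup\{\nu\})$ by $\dt{\gamma_i}^{\mp1}$ gives $A(5,7)\cup\{\nu\}$, not $A(5,7)\cup\{\dt{\gamma_i}^{\mp1}(\nu)\}$.

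The missing idea is to run your first attempt for all the curves $\nu^{\pm}_{j,j+2}$ simultaneously, and then bootstrap.
\begin{itemize}
\item \emph{First stage.} For every $j$, with $A\cup E$ equal to your $A(j+1,j+3)$, one has
\[
\nu^{\mp}_{j+1,j+3}=\langle A\cup E\cup\{\nu^{\pm}_{j+2,j+4}\}\rangle=\langle A\cup E\cup\{\nu^{\pm}_{j,j+2}\}\rangle .
\]
Since $\gamma_j$ misses $\nu^{\pm}_{j+2,j+4}$ and $\gamma_{j+3}$ misses $\nu^{\pm}_{j,j+2}$, this gives $\dt{\gamma_j}^{\pm1}(\nu^{\pm}_{j+1,j+3})$ and $\dt{\gamma_{j+3}}^{\pm1}(\nu^{\pm}_{j+1,j+3})$ in $V(\Y^{10})$ for all $j$ and both signs. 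This is Lemma~\ref{lemma:dtgammainujkp1}.
\item \emph{Second stage, $\gamma_5$.} Use the same set for $\nu^{-}_{5,7}$ with auxiliary curve $\nu^{+}_{6,8}$. Its $\dt{\gamma_5}^{\pm1}$-image is in $V(\Y^{10})$ by the first stage at $j=5$, whose auxiliary curve $\nu^{-}_{7,9}$ is disjoint from $\gamma_5$.
\item \emph{Second stage, $\gamma_6$.} Use the auxiliary curve $\nu^{+}_{4,6}$. Its $\dt{\gamma_6}^{\pm1}$-image is in $V(\Y^{10})$ by the first stage at $j=3$, whose auxiliary curve is $\nu^{-}_{3,5}$.
\item \emph{Conclusion.} Combined with $\dt{\gamma_i}^{\pm1}(A(5,7))\subset V(\Y^{9})$, this gives $V(\Y^{11})$ (Corollary~\ref{cor:dtgammainujkp2}). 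No new uniqueness check is needed, since every configuration is the homeomorphic image of one already verified.
\end{itemize}
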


Given the lengths of these proofs, we dedicate a subsubsection to each one.

\subsubsection{Proof of Lemma \ref{lemma:dtgammaiA}}\label{subsubsec:dtgammaiA}

Given that $\dt{\gamma_{i}}^{\pm 1}(\alpha_{j,k}) = \alpha_{j,k}$ if $i(\alpha_{j,k}, \gamma_{i}) = 0$, to prove Lemma \ref{lemma:dtgammaiA} we only need to verify that $\dt{\gamma_{i}}^{\pm 1}(\alpha_{i+1,i+2})$, $\dt{\gamma_{i}}^{\pm 1}(\alpha_{i,i+1})$, $\dt{\gamma_{i}}^{\pm 1}(\alpha_{i-1,i})$, $\dt{\gamma_{i}}^{\pm 1}(\alpha_{i,i-2})$, $\dt{\gamma_{i}}^{\pm 1}(\alpha_{i+1,i-1})$, $\dt{\gamma_{i}}^{\pm 1}(\alpha_{i+2,i}) \in V(\Y^{6})$. However, to do this we need several auxiliary curves and lemmata.

Let $1 \leq j \leq g$ be fixed, and let $$E = (\E \setminus \{\veps_{j-3,j-1}, \veps_{j-2,j}, \veps_{j-1,j+1}, \veps_{j,j+2}, \veps_{j+1,j+3}\}) \cup \{\veps_{j-2,j+2}\} \subset V(\Y^{1}),$$ $$A_{1} = \{\alpha_{k}: k \neq j, j+1\} \subset V(\Y^{1}),$$ $$A_{2} = \{\alpha_{j,j+2}, \alpha_{j+1,j-2}\} \subset V(\Y^{3}).$$
 Then we define $\mu^{2}_{j}$ as follows: $$\mu^{2}_{j} := \langle E \cup A_{1} \cup A_{2} \rangle \in V(\Y^{4}).$$ See Figure \ref{fig:def-mujup2}.

\begin{figure}[ht]
  \centering
  \includegraphics[height=3.5cm]{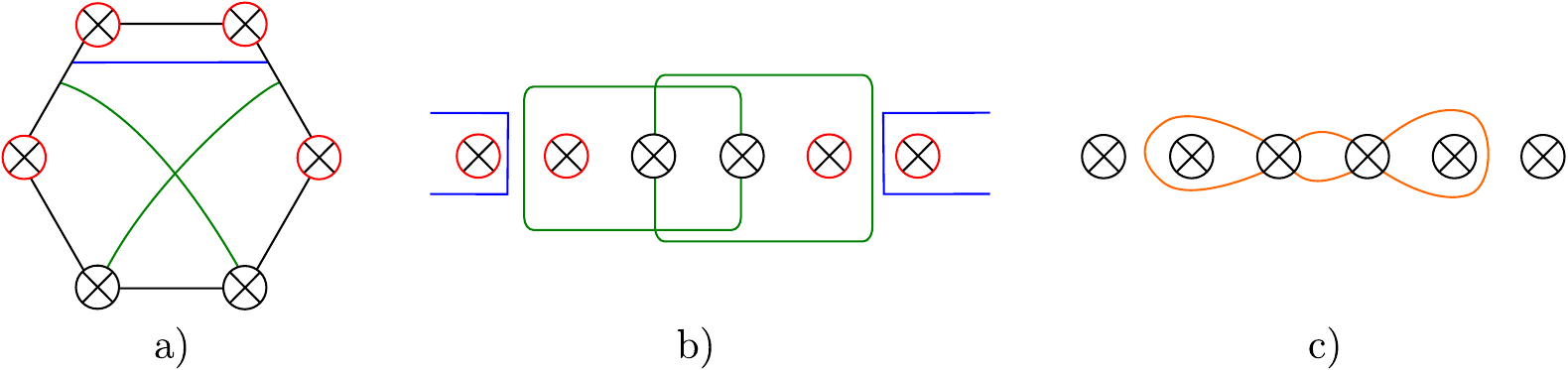}\caption{(a) and (b) depict the curves (from two angles) necessary to uniquely determine $\mu_{j}^{2}$, which is depicted in (c).}\label{fig:def-mujup2}
 \end{figure}

\begin{lemma}\label{lemma:dtgammaiAp1}
 For all $1 \leq i \leq g-1$ we have that $\dt{\gamma_{i}}^{\pm 1}(\alpha_{i+1,i+2})$, $\dt{\gamma_{i}}^{\pm 1}(\alpha_{i,i-2}) \in V(\Y^{5})$.
\end{lemma}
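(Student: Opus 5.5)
We will show that each of the four curves $\dt{\gamma_{i}}^{\pm 1}(\alpha_{i+1,i+2})$ and $\dt{\gamma_{i}}^{\pm 1}(\alpha_{i,i-2})$ is uniquely determined by a set of curves in $V(\Y^{4})$. The natural ingredients are the curves $\mu^{2}_{j}$ just introduced, which lie in $V(\Y^{4})$, together with curves from $\E$, the $\alpha_{k}$'s (in $V(\Y^{1})$) and the $\alpha_{k,l}$'s (in $V(\Y^{3})$ by Lemma \ref{lemma:alphaij}). This is exactly why the statement lands in $\Y^{5}$.

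\textbf{Step 1: locate the twisted curves.} In the model, $\gamma_{i}$ is the projection of the segments joining $a_{i}$ to $a_{i+1}$, so it is the boundary of a neighbourhood of the two crosscaps $\alpha_{i},\alpha_{i+1}$. It cuts off a one-holed Klein bottle $K_{i}$ containing those two crosscaps. Each of $\alpha_{i+1,i+2}$ and $\alpha_{i,i-2}$ meets $\gamma_{i}$ exactly once; the arc of $\alpha_{i+1,i+2}$ inside $K_{i}$, for instance, is the crosscap arc through $a_{i+1}$. Hence $\dt{\gamma_{i}}^{\pm1}$ of either curve is again a one-sided curve. Outside $K_{i}$ it agrees with the original, and inside it becomes an arc passing through the crosscap at $a_{i+1}$ and, after one twist, also through $a_{i}$.

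\textbf{Step 2: the disjoint curves.} The twist changes nothing away from a neighbourhood of $K_{i}\cup\alpha$. Therefore, for $\alpha = \alpha_{i+1,i+2}$ we take:
\begin{itemize}
\item $\alpha_{k}$ for $k\notin\{i,i+1,i+2\}$;
\item the curves $\veps_{k,k+2}$ of $\E$ that are disjoint from both $\alpha$ and $\gamma_{i}$;
\item one or two of the curves $\alpha_{k,l}$ on the far side of the surface.
\end{itemize}
All of these are disjoint from $\alpha$ and $\gamma_{i}$, hence from $\dt{\gamma_{i}}^{\pm1}(\alpha)$.

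These curves cut $N$ so that the complementary component containing the twisted curve is roughly a three-crosscap region: crosscaps $i,i+1,i+2$ with one or two boundary curves. In such a region the one-sided curves disjoint from everything are still ambiguous. To cut out all candidates but one, we add a curve of type $\mu^{2}_{j}$. Recall that $\mu^{2}_{j}$ is the two-sided curve uniquely determined in $\Y^{4}$ via $\alpha_{j,j+2}$ and $\alpha_{j+1,j-2}$, so it "sees" the pair of crosscaps $j,j+1$ in a twisted way. For $\dt{\gamma_{i}}(\alpha_{i+1,i+2})$ the appropriate choice is $\mu^{2}_{j}$ with $j$ equal to $i$ or $i+1$. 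For the inverse twist, we either use the mirror choice, or pair $\mu^{2}_{j}$ with an $\alpha_{k,l}$ in the other direction. The same recipe, shifted by index, handles $\alpha_{i,i-2}$.

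\textbf{Step 3: verify uniqueness.} We check that $\mu^{2}_{j}$ is disjoint from the twisted curve, for example by drawing it on the two $2g$-gons. We then check that the complement of the chosen set, in the relevant component, is a region whose only essential non-peripheral curve is the twisted one. Typical candidates are a projective plane with two holes, or a one-holed Klein bottle where the disjointness from $\mu^{2}_{j}$ singles out one of the one-sided curves.

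\textbf{Main obstacle.} The hard part is this last verification. In a Klein bottle with one hole there are infinitely many one-sided curves, all related by twists along $\gamma_{i}$, and disjointness from a single curve must rule out all but one of them. I would first try to show that $\mu^{2}_{j}$ meets $K_{i}$ in a single arc. Each twisted candidate $\dt{\gamma_{i}}^{n}(\alpha)$ then meets that arc with intersection number $|n-1|$ (respectively $|n+1|$), which forces $n=\pm1$. Finally, I would handle small $g$ ($g=6$) separately, checking that the indices $i-2,\dots,i+2$ do not wrap around and collide.
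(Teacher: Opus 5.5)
\textbf{There is a genuine gap: you never exhibit a determining set, and the uniqueness check is left open.} To prove the lemma you must give, for each curve, an explicit set $A\subset V(\Y^{4})$ with $\langle A\rangle$ equal to that curve. You never commit to one. Step~2 describes $A$ only loosely (``the curves of $\E$ that are disjoint'', ``one or two $\alpha_{k,l}$ on the far side'', ``$\mu^{2}_{j}$ with $j=i$ or $i+1$''). Step~3, the uniqueness check that carries all the content, is left as an open ``main obstacle''.

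Your heuristic for that check would not close the gap even if justified. An intersection count showing that only one $\dt{\gamma_{i}}^{n}(\alpha)$ avoids an arc of $\mu^{2}_{j}$ excludes candidates only within a single twist family. By contrast, $\langle A\rangle$ requires that no other vertex of $\cg{N}$, one- or two-sided, be disjoint from all of $A$. By your own description the leftover region carries three crosscaps, so candidates are not confined to a one-holed Klein bottle. You also dropped $\alpha_{i+2}$, which is in fact disjoint from $\dt{\gamma_{i}}(\alpha_{i+1,i+2})$.

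Step~1 misidentifies $\gamma_{i}$. The arcs $c_{i}$ and $c_{i}^{\prime}$ run from $a_{i}$ to $a_{i+1}$, so $\gamma_{i}$ passes once through each of the crosscaps $i$ and $i+1$. It is the two-sided non-separating curve \emph{inside} the one-holed Klein bottle bounded by $\veps_{i-1,i+1}$, not its boundary. A separating curve could not meet $\alpha_{i+1,i+2}$ exactly once. Your later picture, one-sided curves in $K_{i}$ related by twists along $\gamma_{i}$, already presupposes the correct position, so your Step~1 is internally inconsistent.

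\textbf{How the paper handles it.} Half of the statement needs no unique-determination argument. We have $\dt{\gamma_{i}}^{-1}(\alpha_{i+1,i+2})=\alpha_{i,i+2}$, and analogously $\dt{\gamma_{i}}(\alpha_{i,i-2})$ is again one of the model curves $\alpha_{k,l}$. Both therefore lie in $V(\Y^{3})$ by Lemma~\ref{lemma:alphaij}; you instead planned a $\mu^{2}_{j}$-argument for these too.

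For $\dt{\gamma_{i}}(\alpha_{i+1,i+2})$ the paper takes $A=E\cup A_{1}\cup A_{2}\cup\{\mu^{2}_{i}\}$, all in $V(\Y^{4})$, where
\begin{itemize}
\item $E=(\E\setminus\{\veps_{i-2,i},\veps_{i-1,i+1},\veps_{i,i+2},\veps_{i+1,i+3}\})\cup\{\veps_{i-1,i+2}\}$,
\item $A_{1}=\{\alpha_{k}:k\neq i,i+1\}$,
\item $A_{2}=\{\alpha_{i+1,i+2},\alpha_{i+1,i-1}\}$.
\end{itemize}
The case $\dt{\gamma_{i}}^{-1}(\alpha_{i,i-2})$ is symmetric.

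The idea missing from your proposal is to put the untwisted curve $\alpha_{i+1,i+2}$ itself into $A$. A one-sided curve meeting the two-sided $\gamma_{i}$ once is disjoint from its image under $\dt{\gamma_{i}}^{\pm1}$. Together with $\alpha_{i+1,i-1}$, the curve $\veps_{i-1,i+2}$ cutting off crosscaps $i,i+1,i+2$, and $\mu^{2}_{i}$, this is the set the paper uses to single out the twisted curve.
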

\begin{proof}
 To see that $\dt{\gamma_{i}}(\alpha_{i+1,i+2}) \in V(\Y^{5})$, let $$E = (\E \setminus \{\veps_{i-2,i}, \veps_{i-1,i+1}, \veps_{i,i+2}, \veps_{i+1,i+3}\}) \cup \{\veps_{i-1,i+2}\} \subset V(\Y^{1}),$$ $$A_{1} = \{\alpha_{k}: k \neq i,i+1\} \subset V(\Y^{1}),$$ $$A_{2} = \{\alpha_{i+1,i+2}, \alpha_{i+1,i-1}\} \subset V(\Y),$$ $$M = \{\mu^{2}_{i}\} \subset V(\Y^{4}).$$ Then we have that $\dt{\gamma_{i}}(\alpha_{i+1,i+2}) = \langle E \cup A_{1} \cup A_{2} \cup M\rangle \in V(\Y^{5})$; see Figure \ref{fig:lemma-dtgammaiAp1-alphai+1i+2}. Analogously, we also have that $\dt{\gamma_{i}}^{-1}(\alpha_{i,i-2}) \in V(\Y^{5})$.
 
 Given that $\dt{\gamma_{i}}^{-1}(\alpha_{i+1,i+2}) = \alpha_{i,i+2}$, we have that $\dt{\gamma_{i}}^{-1}(\alpha_{i+1,i+2}) \in V(\Y^{3})$. Analogously, we also have that $\dt{\gamma_{i}}(\alpha_{i,i-2}) \in V(\Y^{3})$.
\end{proof}

\begin{figure}[ht]
  \centering
  \includegraphics[height=3.5cm]{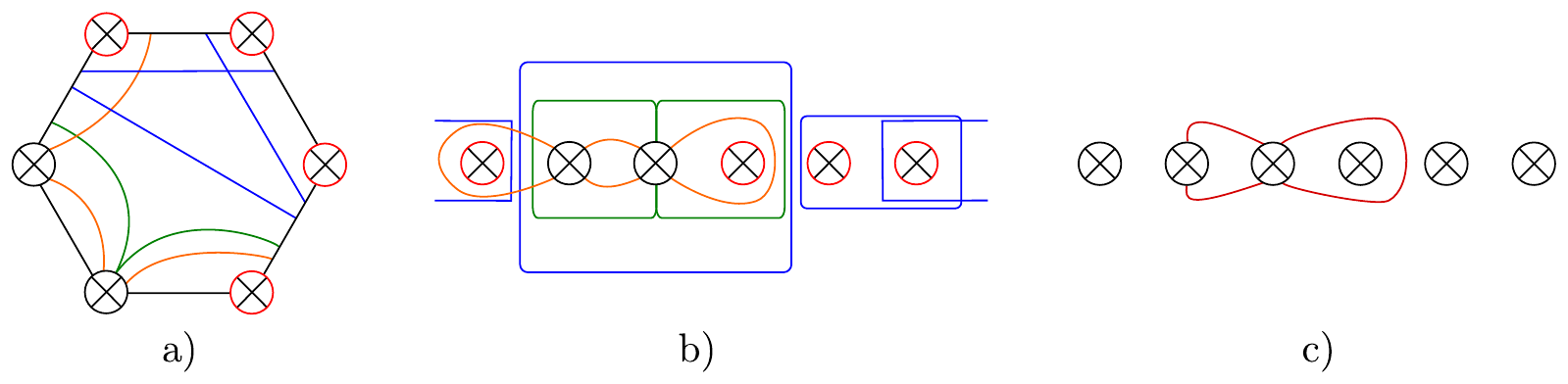}\caption{(a) and (b) depict the curves (from two angles) necessary to uniquely determine $\dt{\gamma_{i}}(\alpha_{i+1,i+2})$, which is depicted in (c).}\label{fig:lemma-dtgammaiAp1-alphai+1i+2}
 \end{figure}

For a fixed $1 \leq i \leq g$, we define the following auxiliary curve: $$E = (\E \setminus \{\veps_{i-3,i-1}, \veps_{i-2,i}, \veps_{i-1,i+1}, \veps_{i,i+2}\}) \cup \{\veps_{i-2,i+1}\} \subset V(\Y^{1}),$$ $$A_{1} = \{\alpha_{k} : k \neq i,i+1\} \subset V(\Y^{1}),$$ $$A_{2} = \{\alpha_{i,i-2}, \alpha_{i,i+1}\} \subset V(\Y),$$ $$M = \{\mu^{2}_{i}\} \subset V(\Y^{4}).$$ Then we define $\vkap^{1}_{i} := \langle E \cup A_{1} \cup A_{2} \cup M \rangle \in V(\Y^{5})$. Note that $\vkap^{1}_{i} = \dt{\gamma_{i}}^{-1}(\alpha_{i,i-2})$.

On the other hand, we now verify that for all $1 \leq i \leq g$, $\gamma_{i} \in V(\Y^{3})$.

\begin{lemma}\label{lemma:gammai}
 For all $1 \leq i \leq g$, we have that $\gamma_{i} \in V(\Y^{5})$.
\end{lemma}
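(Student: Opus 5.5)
Following the pattern of the earlier lemmata, I would show that for each fixed $1 \leq i \leq g$ the curve $\gamma_{i}$ is uniquely determined by a set of curves already known to lie in $V(\Y^{4})$. The curve $\gamma_{i}$ is the projection of the segments $c_{i}\cup c^{\prime}_{i}$ joining the sides $a_{i}$ and $a_{i+1}$ in both polygons. A regular neighbourhood of $\alpha_{i}\cup\alpha_{i+1}$ is a Klein bottle minus a disk whose boundary is isotopic to $\veps_{i-1,i+1}$. Inside it, $\gamma_{i}$ is the two-sided curve passing once through each of the two crosscaps.

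I would take
\[
A = \{\alpha_{k} : k \neq i, i+1\} \subset V(\Y^{1})
\]
and
\[
E = \E \setminus \{\veps_{i-2,i}, \veps_{i-1,i+1}, \veps_{i,i+2}\} \subset V(\Y^{1}),
\]
together with the $\veps_{i-1,i+1}$ itself (since $\gamma_{i}$ is disjoint from it, lying inside the Klein bottle). The curves in $A \cup E \cup \{\veps_{i-1,i+1}\}$ are all disjoint from $\gamma_{i}$: the segments $e_{k,k+2}$ with $k\notin\{i-2,i,i+1\}$ avoid the corner region near $a_{i},a_{i+1}$, and the $\alpha_{k}$ with $k\neq i,i+1$ cross different $a$-sides. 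Cutting $N$ along these curves should leave, up to complementary discs, annuli and Möbius bands, only the one-holed Klein bottle bounded by $\veps_{i-1,i+1}$.

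The curves essential and non-peripheral there are $\alpha_{i}$, $\alpha_{i+1}$, the two-sided curve $\gamma_{i}$, and the infinitely many one-sided curves obtained from $\alpha_{i}$ by powers of the crosscap slide. Among these, $\gamma_{i}$ is the unique two-sided non-peripheral curve, so it remains to exclude $\alpha_{i}$ and the other one-sided curves. For this I would add one-sided curves lying in $V(\Y^{4})$ that meet $\alpha_{i}$ and all its crosscap-slide images but are disjoint from $\gamma_{i}$. The natural candidates are $\alpha_{i,i+1}$ or $\alpha_{i,i-2}$ (in $\A$, hence in $\Y$), and $\alpha_{i,i+2}$ and $\alpha_{i+1,i-2}$ (in $V(\Y^{3})$ by Lemma~\ref{lemma:alphaij}). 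These are exactly the curves used for $A_{2}$ in the definition of $\mu^{2}_{j}$. If necessary, I would also include $\mu^{1}_{i}\in V(\Y^{2})$ or $\mu^{2}_{i}\in V(\Y^{4})$. With the determining set contained in $V(\Y^{4})$, this gives $\gamma_{i}\in V(\Y^{5})$.

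The main obstacle is this last step: choosing additional curves that are genuinely disjoint from $\gamma_{i}$ in the polygon model, yet kill every one-sided curve in the holed Klein bottle. An arc from $a_{i}$ to some $e_{j}$ and back from $a^{\prime}_{i}$ can pass alongside $c_{i}$, so it may meet $\gamma_{i}$ essentially. I would check this directly with the picture of $\gamma_{i}$ against $\alpha_{i,i+2}$ and $\alpha_{i+1,i-2}$. If they fail, the fallback is $\mu^{2}_{i}$, whose support was designed around the pair $i,i+1$ and is therefore likely to be disjoint from $\gamma_{i}$ while intersecting the crosscaps' core curves.
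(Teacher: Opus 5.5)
Your reduction is the right one: your sets $A$ and $E\cup\{\veps_{i-1,i+1}\}$ are exactly the paper's $\{\alpha_k : k\neq i,i+1\}$ and $\E\setminus\{\veps_{i-2,i},\veps_{i,i+2}\}$. But the step that carries all the content, excluding the one-sided curves of the one-holed Klein bottle bounded by $\veps_{i-1,i+1}$, is not carried out, and the curves you propose for it cannot work. In $H_1(N;\mathbb{Z}/2)$ the cores $\alpha_1,\ldots,\alpha_g$ form a basis in which the intersection form is the identity. Each of $\alpha_{i,i+1}$, $\alpha_{i,i-2}$, $\alpha_{i,i+2}$ passes once through crosscap $i$ and misses every other core, so its class is $[\alpha_i]$. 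Likewise $[\alpha_{i+1,i-2}]=[\alpha_{i+1}]$, whereas $[\gamma_i]=[\alpha_i]+[\alpha_{i+1}]$. Hence each of your candidates meets $\gamma_i$ an odd number of times, and adding any of them destroys adjacency to $\gamma_i$. Indeed $\alpha_{i,i+1}=\tau_{\gamma_i}(\alpha_{i+1})$ is itself one of the one-sided curves of the Klein bottle. Your first fallback $\mu^1_i$ also fails. The curve $\alpha_{i+1}$ lies in the set determining $\mu^1_i$, so it is disjoint from $\mu^1_i$ and from everything in $A\cup E\cup\{\veps_{i-1,i+1}\}$. It therefore remains a second common neighbour.

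The only option on your list that works is $\mu^2_i$, and it is precisely the paper's choice: $\gamma_i=\langle(\E\setminus\{\veps_{i-2,i},\veps_{i,i+2}\})\cup\{\alpha_k:k\neq i,i+1\}\cup\{\mu^2_i\}\rangle$. Its determining set forces $[\mu^2_i]=0$, so unlike your one-sided candidates it is at least homologically compatible with missing $\gamma_i$. But you offer it only as ``likely'', and the needed facts are not argued. One way to organize the check is as follows. The one-sided curves of the one-holed Klein bottle are the $\tau_{\gamma_i}$-translates of $\alpha_i$ and $\alpha_{i+1}$. They are not crosscap-slide images: the square of a crosscap slide is the twist about the boundary, which fixes every curve inside, so its powers yield only finitely many curves. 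Since $\tau_{\gamma_i}$ fixes every curve disjoint from $\gamma_i$, it suffices to verify that $\mu^2_i$ is disjoint from $\gamma_i$ and meets both $\alpha_i$ and $\alpha_{i+1}$. This is what the paper reads off its figure for this lemma. Until that verification is supplied and the failing candidates are dropped, the proposal does not establish the lemma.
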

\begin{proof}
 Let $1 \leq i \leq g$ be fixed. We define the following sets: $$E = \E \setminus \{\veps_{i-2,i}, \veps_{i,i+2}\} \subset V(\Y^{1}),$$ $$A = \{\alpha_{k}: k \neq i, i+1\} \subset V(\Y^{1}),$$ $$M = \{\mu^{2}_{i}\} \subset V(\Y^{4}).$$ Then $\gamma_{i} = \langle E \cup A \cup M \rangle \in V(\Y^{5})$. See Figure \ref{fig:lemma-gammai}.
\end{proof}

\begin{figure}[ht]
  \centering
  \includegraphics[height=3.5cm]{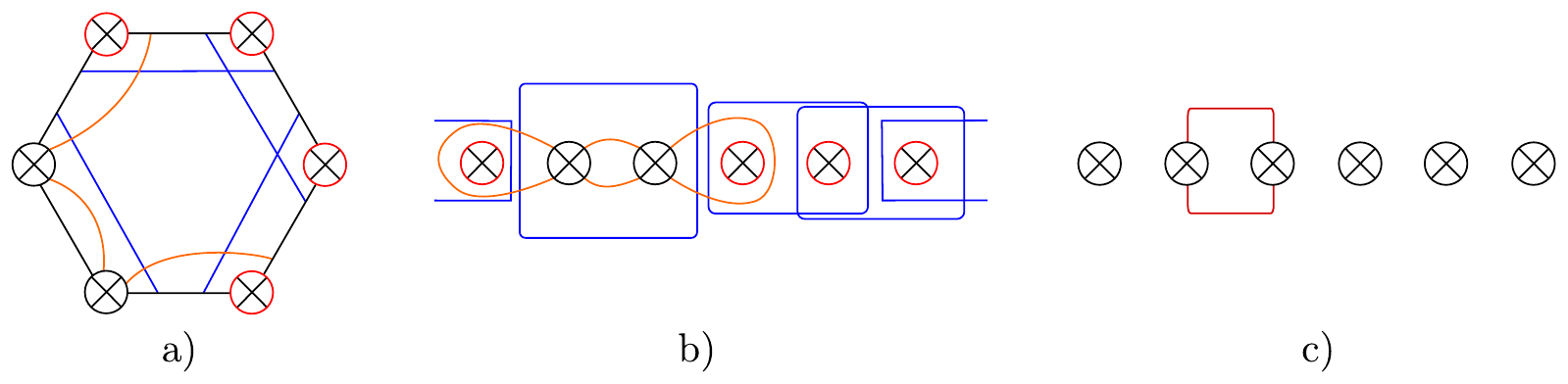}\caption{(a) and (b) depict the curves (from two angles) necessary to uniquely determine $\gamma_{i}$, which is depicted in (c).}\label{fig:lemma-gammai}
 \end{figure}

With this lemma we can now define the following auxiliary curve. Let $$C = \{\gamma_{i}: 1 \leq i \leq g\}.$$ Then we define $\eta := \langle C \rangle \in V(\Y^{6})$. See Figure \ref{fig:Def-eta}.

\begin{figure}[ht]
  \centering
  \includegraphics[height=3.5cm]{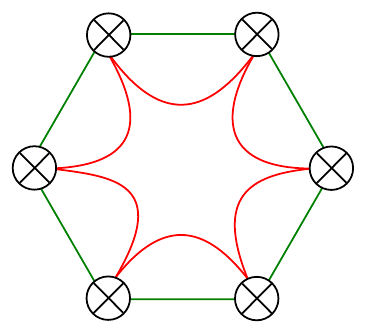}\caption{The curves in $C$ in red, and the curve $\eta = \langle C \rangle$ in green.}\label{fig:Def-eta}
 \end{figure}

\begin{lemma}\label{lemma:dtgammaiAp2}
 For all $1 \leq i \leq g-1$, we have that $\dt{\gamma_{i}}^{\pm 1}(\alpha_{i,i+1})$, $\dt{\gamma_{i}}^{\pm 1}(\alpha_{i+1,i-1}) \in V(\Y^{8})$.
\end{lemma}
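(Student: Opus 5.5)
We follow the pattern of Lemma \ref{lemma:dtgammaiAp1}: each of the four curves $\dt{\gamma_{i}}^{\pm 1}(\alpha_{i,i+1})$ and $\dt{\gamma_{i}}^{\pm 1}(\alpha_{i+1,i-1})$ will be written as $\langle B \rangle$. The set $B$ will lie in $V(\Y^{7})$, and the newest ingredient will be the curve $\eta \in V(\Y^{6})$, which gives the index $8$.

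The key observation concerns the supports. The curves $\alpha_{i,i+1}$ and $\alpha_{i+1,i-1}$ each intersect $\gamma_{i}$ once. Hence the four images are supported in a regular neighbourhood $S$ of $\gamma_{i}$ together with the relevant one-sided curve, which is a punctured Klein-bottle-like region near the crosscaps $i,i+1$. Outside this region we will reuse the "background" curves already appearing in the proofs of Lemma \ref{lemma:dtgammaiAp1} and Lemma \ref{lemma:gammai}:
\begin{itemize}
\item the set $E=(\E\setminus\{\veps_{i-2,i},\veps_{i-1,i+1},\veps_{i,i+2},\veps_{i+1,i+3}\})\cup\{\veps_{i-1,i+2}\}$, or a variant of it;
\item the curves $\alpha_{k}$ with $k\neq i,i+1$.
\end{itemize}
All of these lie in $V(\Y^{1})$ and are disjoint from $\gamma_{i}$ and from the twisted curve. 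They cut $N$ down to a subsurface $Q$ homeomorphic to a torus-with-crosscap-type region containing crosscaps $i,i+1$, in which only finitely many curves remain as candidates.

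Inside $Q$ we then add curves that pin down the image. For $\dt{\gamma_{i}}^{\pm1}(\alpha_{i,i+1})$, we will use:
\begin{itemize}
\item $\gamma_{i}$ itself, which is disjoint from the image only if we instead take twisted versions. So rather than $\gamma_{i}$, we use the already-determined $\vkap^{1}_{i}=\dt{\gamma_{i}}^{-1}(\alpha_{i,i-2})\in V(\Y^{5})$ and the analogous curve for the opposite twist. These are disjoint from $\dt{\gamma_{i}}^{-1}(\alpha_{i,i+1})$, since $\alpha_{i,i-2}$ and $\alpha_{i,i+1}$ are disjoint and twisting preserves disjointness.
\item The images computed in Lemma \ref{lemma:dtgammaiAp1}, which lie in $V(\Y^{5})$ and serve the same purpose.
\item $\mu^{2}_{i}\in V(\Y^{4})$.
\item $\eta\in V(\Y^{6})$, which is disjoint from all $\gamma_{k}$ and from the relevant one-sided curves near crosscaps $i,i+1$. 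We include it to separate the correct candidate from its mirror candidate (the one obtained by twisting in the opposite direction, or by replacing $\alpha_{i,i+1}$ by another one-sided curve in $Q$).
\end{itemize}
In this way every element of $B$ is obtained as the image under $\dt{\gamma_{i}}^{\pm1}$ of a curve disjoint from $\alpha_{i,i+1}$, or is fixed by $\dt{\gamma_{i}}$, or lies outside $Q$. Since $\dt{\gamma_{i}}$ is a homeomorphism, $\dt{\gamma_{i}}^{\pm1}(\alpha_{i,i+1})$ is adjacent to all of $B$. The curves $\alpha_{i+1,i-1}$ are handled analogously, with the roles of $\alpha_{i,i-2}$ and $\alpha_{i+1,i+2}$ exchanged.

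The main obstacle is uniqueness: showing that no other curve in $Q$ is disjoint from all of $B$. We would handle it as in the earlier figures. Cutting $N$ along the background curves reduces the problem to a surface of small complexity, such as $N_{2,1}$ or a twice-holed projective plane. There the curve graph is explicit: one-sided curves are in bijection with slopes, and one checks that the chosen twisted curves (from Lemma \ref{lemma:dtgammaiAp1}, $\vkap^{1}_{i}$ and $\eta$) leave a single vertex. If $\eta$ turns out not to be needed, the bound improves to $\Y^{7}$ or lower, which still gives the claimed $\Y^{8}$.
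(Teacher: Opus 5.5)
Your localisation to the one-holed Klein bottle bounded by $\veps_{i-1,i+1}$ is correct, but there is a genuine gap. You never exhibit $B$, you defer the uniqueness check, and each curve you do nominate for $B$ in fact intersects the target curves, so none of them can belong to a determining set.

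A parity count shows this. Let $x_k\in H_1(N;\mathbb{Z}/2)$ be the class of $\alpha_k$, so $x_j\cdot x_k=\delta_{jk}$ and $i(a,b)\equiv [a]\cdot[b]\pmod 2$. Every $\alpha_{j,l}$ has class $x_j$ and $[\gamma_k]=x_k+x_{k+1}$. Since $\eta$ is non-separating and disjoint from every $\gamma_k$, we get $[\eta]=x_1+\cdots+x_g$.
\begin{itemize}
\item You conflate ``fixed by $\dt{\gamma_i}$'' with ``disjoint from the twisted curve''. For $\eta$ this only gives $i(\eta,\dt{\gamma_i}^{\pm1}(\alpha_{i,i+1}))=i(\eta,\alpha_{i,i+1})$, which is odd. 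The same holds for $\alpha_{i+1,i-1}$.
\item Your premise that $\alpha_{i,i-2}$ and $\alpha_{i,i+1}$ are disjoint is false, since both have class $x_i$. In fact $\vkap^{1}_{i}$ and the images $\dt{\gamma_i}^{\pm1}(\alpha_{i,i-2})$ from Lemma~\ref{lemma:dtgammaiAp1} all have class $x_{i+1}$, as do both $\dt{\gamma_i}^{\pm1}(\alpha_{i,i+1})$. So they meet those targets.
\item $\gamma_i$ and $\dt{\gamma_i}(\alpha_{i,i+1})$ are both adjacent to the background $(\E\setminus\{\veps_{i-2,i},\veps_{i,i+2}\})\cup\{\alpha_k\}_{k\neq i,i+1}$. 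By Lemma~\ref{lemma:gammai}, this background together with $\mu^{2}_{i}$ determines $\gamma_i$. Hence $\mu^2_i$ must meet $\dt{\gamma_i}(\alpha_{i,i+1})$, and indeed every curve of that Klein bottle other than $\gamma_i$.
\end{itemize}

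What is missing is the paper's two-level structure.

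First, half the statement is free. We have $\dt{\gamma_i}^{-1}(\alpha_{i,i+1})=\alpha_{i+1}$ and, analogously, $\dt{\gamma_i}(\alpha_{i+1,i-1})\in V(\Y^{1})$. Here the twist moves one step along the chain $\alpha_{i,i+1},\alpha_{i+1},\alpha_i,\alpha_{i+1,i-1}$ of one-sided curves in that Klein bottle, in which consecutive curves are disjoint.

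Second, for the two genuinely new curves, $\vkap^1_i$ and $\eta$ enter only one level down. They are used to build the intermediate curve $\gamma'_i=\langle E\cup A\cup\{\vkap^1_i,\eta\}\rangle\in V(\Y^7)$, where $E=(\E\setminus\{\veps_{i-3,i-1},\veps_{i-2,i},\veps_{i-1,i+1},\veps_{i,i+2}\})\cup\{\veps_{i-2,i+1}\}$ and $A=\{\alpha_k\}_{k\neq i-1,i,i+1}$. This $\gamma'_i$ is disjoint from the target. Then
$\dt{\gamma_i}(\alpha_{i,i+1})=\langle(\E\setminus\{\veps_{i-2,i},\veps_{i,i+2}\})\cup\{\alpha_k\}_{k\neq i,i+1}\cup\{\gamma'_i\}\rangle\in V(\Y^8)$.
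This background cuts $N$ down exactly to the Klein bottle bounded by $\veps_{i-1,i+1}$; your three-crosscap region is larger than needed. The case $\dt{\gamma_i}^{-1}(\alpha_{i+1,i-1})$ is symmetric.

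This intermediate step is also where the index $8$ comes from. In your own bookkeeping, a newest ingredient in $\Y^6$ would only give $\Y^7$.
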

\begin{proof}
 Let $1 \leq i \leq g-1$ be fixed. First we define an auxiliary curve: $$E = (\E \setminus \{\veps_{i-3,i-1}, \veps_{i-2,i}, \veps_{i-1,i+1}, \veps_{i,i+2}\}) \cup \{\veps_{i-2,i+1}\} \subset V(\Y^{1}),$$ $$A = \{\alpha_{k}: k \neq i-1, i, i+1\} \subset V(\Y^{1}),$$ $$K = \{\vkap^{1}_{i}, \eta\} \subset V(\Y^{6}).$$ With these sets we define $\gamma^{\prime}_{i} := \langle E \cup A \cup K \rangle \in V(\Y^{7})$. See Figure \ref{fig:lemma-dtgammaiAp2-gammaprime}.
 
 Now, to prove the lemma we define the following sets: $$E = \E \setminus \{\veps_{i-2,i}, \veps_{i,i+2}\} \subset V(\Y^{1}),$$ $$A = \{\alpha_{k}: k \neq i, i+1\} \subset V(\Y^{1}),$$ $$C = \{\gamma^{\prime}_{i}\} \subset V(\Y^{7}).$$ Then, we have that $\dt{\gamma_{i}}(\alpha_{i,i+1}) = \langle E \cup A \cup C \rangle \in V(\Y^{8})$. See Figure \ref{fig:lemma-dtgammaiAp2-alphaii+1}. Analogously, we have that $\dt{\gamma_{i}}^{-1}(\alpha_{i+1,i-1}) \in V(\Y^{8})$.
 
 Given that $\dt{\gamma_{i}}^{-1}(\alpha_{i,i+1}) = \alpha_{i+1}$, we have that $\dt{\gamma_{i}}^{-1}(\alpha_{i,i+1}) \in V(\Y^{1})$. Analogously, we have that $\dt{\gamma_{i}}(\alpha_{i+1,i-1}) \in V(\Y^{1})$.
\end{proof}

\begin{figure}[ht]
  \centering
  \includegraphics[height=3.5cm]{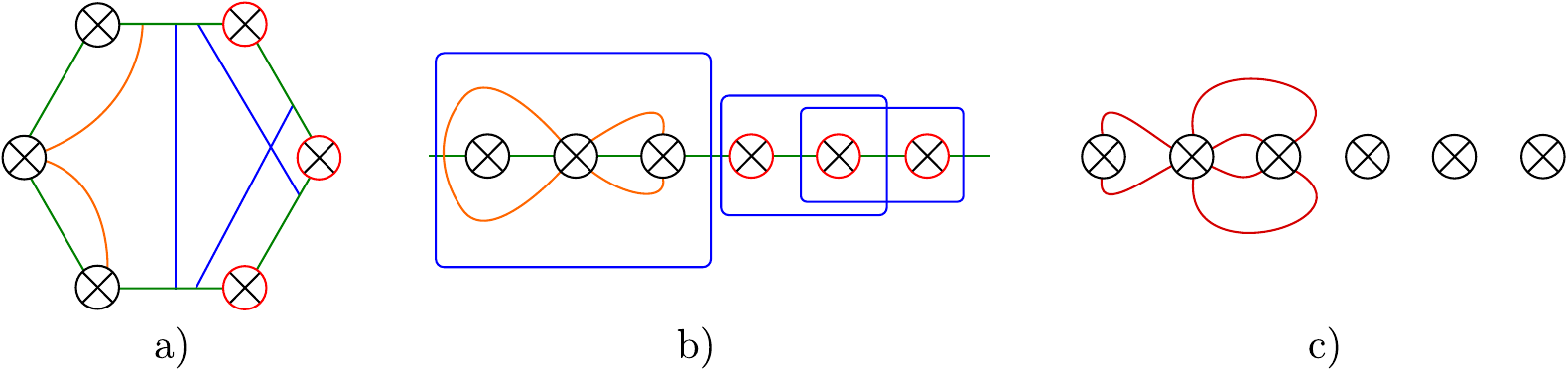}\caption{(a) and (b) depict the curves (from two angles) necessary to uniquely determine $\gamma_{i}^{\prime}$ (needed to uniquely determine $\dt{\gamma_{i}}(\alpha_{i,i+1})$), which is depicted in (c).}\label{fig:lemma-dtgammaiAp2-gammaprime}
 \end{figure}

\begin{figure}[ht]
  \centering
  \includegraphics[height=3.5cm]{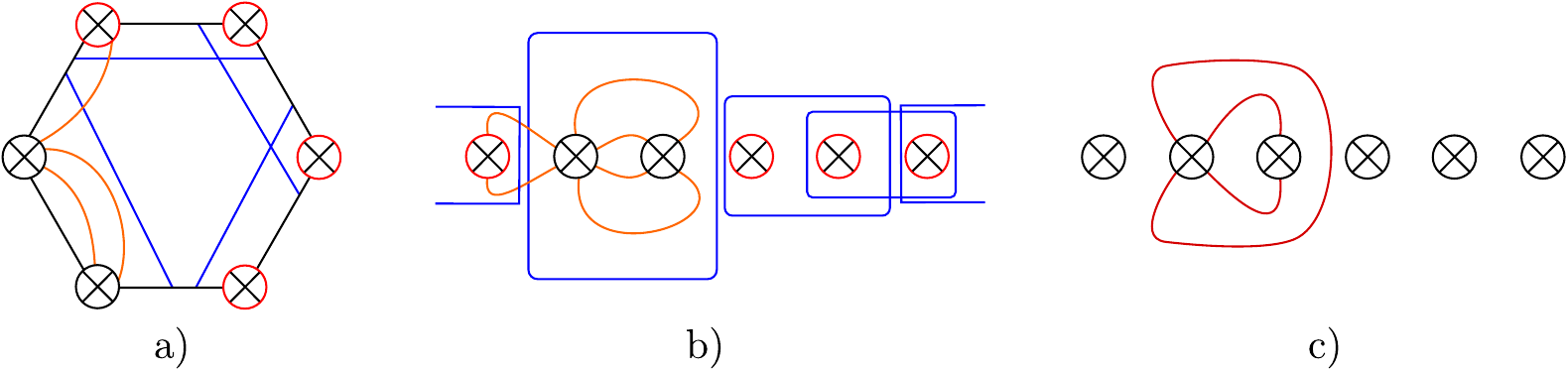}\caption{(a) and (b) depict the curves (from two angles) necessary to uniquely determine $\dt{\gamma_{i}}(\alpha_{i,i+1})$, which is depicted in (c).}\label{fig:lemma-dtgammaiAp2-alphaii+1}
 \end{figure}

\begin{lemma}\label{lemma:dtgammaiAp3}
 For all $1 \leq i \leq g-1$, we have that $\dt{\gamma_{i}}^{\pm 1}(\alpha_{i-1,i})$, $\dt{\gamma_{i}}^{\pm 1}(\alpha_{i+2,i}) \in V(\Y^{5})$.
\end{lemma}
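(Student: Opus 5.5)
We follow the pattern of Lemmas \ref{lemma:dtgammaiAp1} and \ref{lemma:dtgammaiAp2}. For each of the four curves we either identify it with a curve already shown to lie in a low rigid expansion, or exhibit an explicit set of previously constructed curves that uniquely determines it. The twist $\dt{\gamma_{i}}$ is supported in a regular neighbourhood of $\gamma_{i}$, which meets only the crosscaps $i$ and $i+1$ (that is, the sides $a_{i}, a_{i+1}$). Hence each image differs from the original curve only near these two crosscaps. The plan is in two parts: first handle the "easy" sign for each curve, then the "hard" sign.

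\textbf{Easy signs.} Since $\alpha_{i-1,i}$ ends on $e_{i}$, which lies between $a_{i}$ and $a_{i+1}$, one of the two twists should simply slide its endpoint across $a_{i}$. This is the same phenomenon as $\dt{\gamma_{i}}^{-1}(\alpha_{i+1,i+2}) = \alpha_{i,i+2}$ in Lemma \ref{lemma:dtgammaiAp1}. I expect $\dt{\gamma_{i}}^{\epsilon}(\alpha_{i-1,i})$, for one sign $\epsilon$, to be isotopic to $\alpha_{i-1,j}$ or $\alpha_{i+1,i-1}$-type curves, i.e. some $\alpha_{k,l}$. Such curves lie in $V(\Y^{3})$ by Lemma \ref{lemma:alphaij}. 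Symmetrically, $\dt{\gamma_{i}}^{-\epsilon}(\alpha_{i+2,i})$ should be some $\alpha_{k,l}$, hence also in $V(\Y^{3})$. I would verify these identifications by drawing the curves in the model $P$, tracking how the arc of $\gamma_{i}$ from $a_{i}$ to $a_{i+1}$ is traversed.

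\textbf{Hard signs.} For the remaining sign, the image is a genuinely new one-sided curve passing through crosscaps $i$ and $i+1$. I would mimic the construction of $\dt{\gamma_{i}}(\alpha_{i+1,i+2})$ in Lemma \ref{lemma:dtgammaiAp1}, using the determining set
$$E = (\E \setminus \{\veps_{i-2,i}, \veps_{i-1,i+1}, \veps_{i,i+2}, \veps_{i+1,i+3}\}) \cup \{\veps_{k,l}\} \subset V(\Y^{1}),$$
$$A_{1} = \{\alpha_{k}: k \neq i,i+1\} \subset V(\Y^{1}),$$
together with two curves of the form $\alpha_{k,l}$ in $V(\Y^{3})$ and one of $\mu^{2}_{i}$, $\mu^{2}_{i-1}$, $\vkap^{1}_{i}$ in $V(\Y^{5})$.

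The $\veps$ and $\alpha_{k}$ curves cut $N$ down to a neighbourhood of crosscaps $i, i+1$ plus boundary, namely a small non-orientable subsurface (a two-holed Klein bottle or similar). In that subsurface, the $\alpha$'s and the auxiliary curve pin down the image. To stay within $\Y^{5}$ I must avoid $\vkap^{1}_{i}$ (which lies in $V(\Y^{5})$ and would push the result to $\Y^{6}$). I would therefore use only $\mu^{2}_{i}$ or $\mu^{2}_{i-1}$ from $V(\Y^{4})$, plus $\alpha_{k,l}$ and $\veps_{k,l}$ curves, giving membership in $V(\Y^{5})$. The case $\alpha_{i+2,i}$ is analogous under the reflection of the model exchanging the roles of $i$ and $i+1$.

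\textbf{Main obstacle.} The main obstacle is the uniqueness check: showing that in the complement of $E \cup A_{1}$ the only curve disjoint from the chosen $\alpha_{k,l}$'s and from $\mu^{2}$ is the twisted image. This requires classifying curves in the small non-orientable subsurface (a surface of Euler characteristic $-2$ with few boundary components) and using $\mu^{2}$ to break the remaining ambiguity between one-sided curves. I would do this case analysis directly on the polygon model, as the paper does via figures.
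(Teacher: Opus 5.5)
You have a genuine gap, and it starts in the step you call easy: neither twisted image of $\alpha_{i-1,i}$ is a curve $\alpha_{k,l}$. Work in $H_{1}(N;\mathbb{Z}/2)$ and put $x_k=[\alpha_k]$. Then $x_k\cdot x_l=\delta_{kl}$, $[\alpha_{k,l}]=x_k$ and $[\gamma_i]=x_i+x_{i+1}$. Since $x_{i-1}\cdot(x_i+x_{i+1})=0$, both curves $\dt{\gamma_i}^{\pm 1}(\alpha_{i-1,i})$ still have class $x_{i-1}$, so the only possible candidates are the curves $\alpha_{i-1,l}$. For $l\neq i$, the curve $\alpha_{i-1,l}$ is either $\alpha_{i-1}$ or encloses both crosscaps $i$ and $i+1$. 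Either way it is disjoint from $\gamma_i$ and fixed by $\dt{\gamma_i}$, so $\dt{\gamma_i}^{\pm 1}(\alpha_{i-1,i})=\alpha_{i-1,l}$ would force $\alpha_{i-1,i}=\alpha_{i-1,l}$. The case $l=i$ is excluded because $i(\alpha_{i-1,i},\gamma_i)\neq 0$. The same argument with $x_{i+2}$ rules out $\alpha_{i+2,i}$, so Lemma~\ref{lemma:alphaij} cannot dispose of half the cases. The paper instead uses the identity $\dt{\gamma_i}^{-1}(\alpha_{i-1,i})=\dt{\gamma_{i-1}}(\alpha_{i,i+1})$, and its analogue for $\dt{\gamma_i}(\alpha_{i+2,i})$. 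These put the curves in $V(\Y^{5})$ via Lemma~\ref{lemma:dtgammaiAp1} with a shifted index.

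Your determining set for the hard sign fails for the same homological reason. The target has class $x_{i-1}$ (resp.\ $x_{i+2}$), so it meets $\alpha_{i-1}$ (resp.\ $\alpha_{i+2}$), which lies in your $A_{1}=\{\alpha_k : k\neq i,i+1\}$. For $\alpha_{i-1,i}$ the target also meets $\veps_{i-3,i-1}$, which you keep in $E$: the original curve already meets it, and $\gamma_i$ is disjoint from it. The twisted curve lives in the subsurface bounded by $\veps_{i-2,i+1}$, which contains crosscaps $i-1,i,i+1$, not just a neighbourhood of crosscaps $i,i+1$. So the set from Lemma~\ref{lemma:dtgammaiAp1} cannot simply be copied over. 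The paper uses
\begin{itemize}
\item $E=(\E\setminus\{\veps_{i-3,i-1},\veps_{i-2,i},\veps_{i-1,i+1},\veps_{i,i+2}\})\cup\{\veps_{i-2,i+1}\}$,
\item $A_{1}=\{\alpha_k : k\neq i-1,i+1\}$ and $A_{2}=\{\alpha_{i+1,i-1}\}$,
\item $M=\{\mu^{1}_{i+1},\gamma'_{i-1}\}$.
\end{itemize}
The missing ingredient is the new auxiliary curve $\gamma'_{j}\in V(\Y^{4})$. It is uniquely determined by $(\E\setminus\{\veps_{j-3,j-1},\ldots,\veps_{j+1,j+3}\})\cup\{\veps_{j-2,j+2}\}$, $\{\alpha_k : k\neq j,j+2\}$, $\{\alpha_{j,j-2},\alpha_{j,j+2}\}$ and $\{\mu^{1}_{j},\mu^{1}_{j+2}\}$. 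Nothing in your proposal produces such a curve; your candidates $\mu^{2}_{i}$, $\mu^{2}_{i-1}$, $\vkap^{1}_{i}$ play no role in the paper's argument. The uniqueness verification, which you rightly flag as the main obstacle, is also left entirely open.
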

\begin{proof}
 Let $1 \leq i \leq g-1$ be fixed. First we define an auxiliary curve for each $1 \leq j \leq g$: 
 $$E = (\E \setminus \{\veps_{j-3,j-1}, \veps_{j-2,j}, \veps_{j-1,j+1}, \veps_{j,j+2}, \veps_{j+1,j+3}\}) \cup \{\veps_{j-2,j+2}\} \subset V(\Y^{1}),$$
 $$A_{1} = \{\alpha_{k}: k \neq j,j+2\} \subset V(\Y^{1}),$$
 $$A_{2} = \{\alpha_{j,j-2},\alpha_{j,j+2}\} \subset V(\Y^{3}),$$
 $$M = \{\mu^{1}_{j}, \mu^{1}_{j+2}\} \subset V(\Y^{2}).$$
 With these sets we define $\gamma^{\prime}_{j}:= \langle E \cup A_{1} \cup A_{2} \cup M \rangle \in V(\Y^{4})$. See Figure \ref{fig:lemma-dtgammaiAp3-gammajprime}.
 
 Now, to prove the lemma we define the following sets: $$E = (\E \setminus \{\veps_{i-3,i-1}, \veps_{i-2,i}, \veps_{i-1,i+1}, \veps_{i,i+2}\}) \cup \{\veps_{i-2,i+1}\} \subset V(\Y^{1}),$$ $$A_{1} = \{\alpha_{k}: k \neq i-1, i+1\} \subset V(\Y^{1}),$$ $$A_{2} = \{\alpha_{i+1,i-1}\} \subset V(\Y),$$ $$M = \{\mu^{1}_{i+1}\} \cup \{\gamma^{\prime}_{i-1}\} \subset V(\Y^{4}).$$ Then, we have that $\dt{\gamma_{i}}(\alpha_{i-1,i}) = \langle E \cup A_{1} \cup A_{2} \cup M \rangle \subset V(\Y^{5})$. See Figure \ref{fig:lemma-dtgammaiAp3-alphai-1i}. Analogously, we have that $\dt{\gamma_{i}}^{-1}(\alpha_{i+2,i}) \in V(\Y^{5})$.
 
 Given that $\dt{\gamma_{i}}^{-1}(\alpha_{i-1,i}) = \dt{\gamma_{i-1}}(\alpha_{i,i+1})$, we have that $\dt{\gamma_{i}}^{-1}(\alpha_{i-1,i}) \in V(\Y^{5})$. Analogously, we have that $\dt{\gamma_{i}}(\alpha_{i+2,i}) \in V(\Y^{5})$.
\end{proof}

Finally, by Lemmata \ref{lemma:dtgammaiAp1}, \ref{lemma:dtgammaiAp2} and \ref{lemma:dtgammaiAp3}, and the observation at the beginning of this subsubsection, we have that $\dt{\gamma_{i}}^{\pm 1}(\A) \subset V(\Y^{8})$, finishing the proof of Lemma \ref{lemma:dtgammaiA}.

\begin{figure}[ht]
  \centering
  \includegraphics[height=3.5cm]{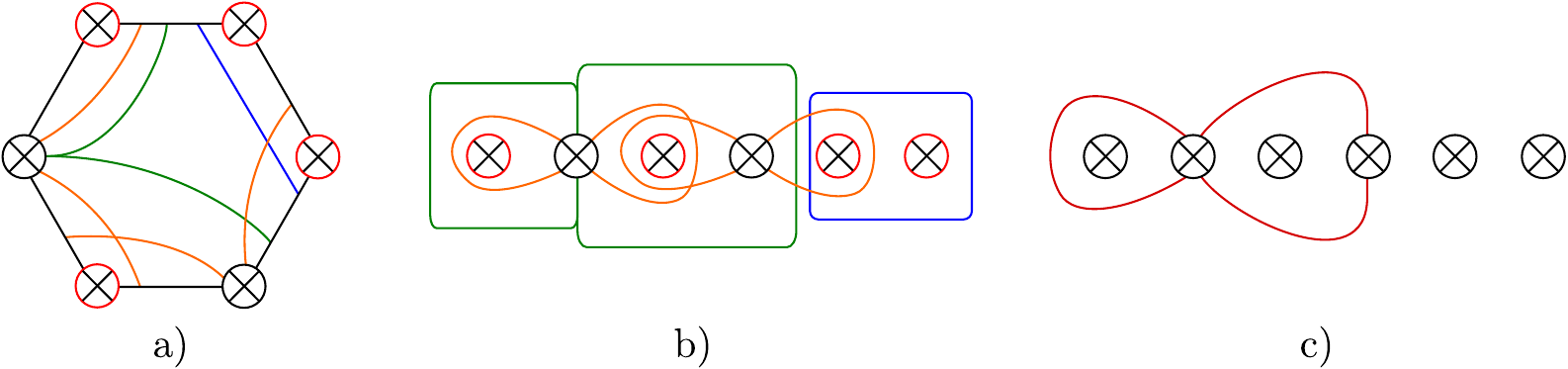}\caption{(a) and (b) depict the curves (from two angles) necessary to uniquely determine $\gamma_{j}^{\prime}$ (needed to uniquely determine $\dt{\gamma_{i}}(\alpha_{i-1,i})$), which is depicted in (c).}\label{fig:lemma-dtgammaiAp3-gammajprime}
 \end{figure}

\begin{figure}[ht]
  \centering
  \includegraphics[height=3.5cm]{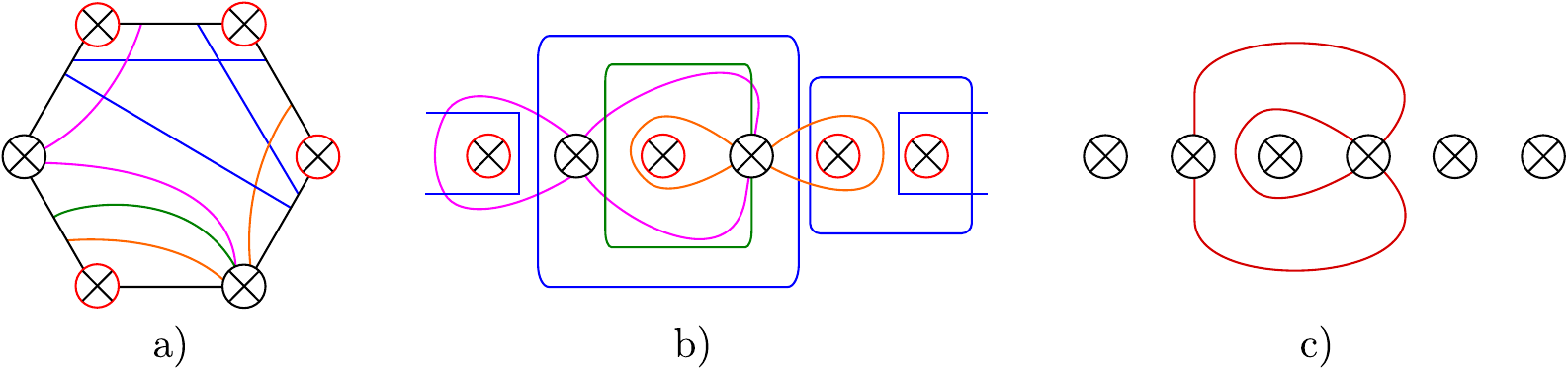}\caption{(a) and (b) depict the curves (from two angles) necessary to uniquely determine $\dt{\gamma_{i}}(\alpha_{i-1,i})$), which is depicted in (c).}\label{fig:lemma-dtgammaiAp3-alphai-1i}
 \end{figure}

\subsubsection{Proof of Lemma \ref{lemma:dtgammaiM}}\label{subsubsec:dtgammaiM}

Given that $\dt{\gamma_{i}}^{\pm 1}(\nu^{-}_{5,7}) = \nu^{-}_{5,7}$ if $i(\gamma_{i},\nu^{-}_{5,7}) = 0$, to prove Lemma \ref{lemma:dtgammaiM} we only need to verify that $\dt{\gamma_{i}}^{\pm 1}(\nu^{-}_{5,7}) \in V(\Y^{11})$ for $i = 4, 5, 6, 7$ (modulo $g$).

Before proving that, we need the following results.

\begin{corollary}\label{cor:dtgammaialphaivepsij}
 For all $1 \leq i \leq g-1$ and for all $1 \leq j,k \leq g$, we have that $\dt{\gamma_{i}}^{\pm 1}(\alpha_{j})$, $\dt{\gamma_{i}}^{\pm 1}(\veps_{j,k}) \in V(\Y^{9})$.
\end{corollary}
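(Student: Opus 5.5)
The plan is to apply the mechanism of Lemma \ref{lemma:gensetYinYk+1} directly to the curves $\alpha_{j}$ and $\veps_{j,k}$, rather than to the whole group. Each of these curves lies in $V(\Y^{1})$ by Lemmata \ref{lemma:alphai} and \ref{lemma:vepsijv2}. Moreover, each is uniquely determined by an explicit subset of $\A \subset V(\Y)$:
\begin{itemize}
\item for $\alpha_{j}$, the sets $A_{1} \cup A_{2}$ in the proof of Lemma \ref{lemma:alphai};
\item for $\veps_{j,k}$, the sets $A_{1} \cup A_{2} \cup A_{3} \cup A_{4}$ in the proof of Lemma \ref{lemma:vepsijv2}.
\end{itemize}
Since $\dt{\gamma_{i}}^{\pm 1}$ acts on $\cg{N}$ by a graph automorphism, uniquely determined vertices are carried to uniquely determined vertices. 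Concretely, if $v = \langle A \rangle$, then $\dt{\gamma_{i}}^{\pm 1}(v) = \langle \dt{\gamma_{i}}^{\pm 1}(A) \rangle$.

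The key input is Lemma \ref{lemma:dtgammaiA}. For every $1 \leq i \leq g-1$ it gives $\dt{\gamma_{i}}^{\pm 1}(\A) \subset V(\Y^{8})$. Hence, for the defining sets $A \subset \A$ above, $\dt{\gamma_{i}}^{\pm 1}(A) \subset V(\Y^{8})$. Consequently, $\dt{\gamma_{i}}^{\pm 1}(\alpha_{j})$ and $\dt{\gamma_{i}}^{\pm 1}(\veps_{j,k})$ are uniquely determined by subsets of $V(\Y^{8})$, and therefore lie in $V(\Y^{9})$. This is exactly the argument of Lemma \ref{lemma:gensetYinYk+1} with $\alpha = 8$, restricted to the generators $\dt{\gamma_{i}}^{\pm 1}$ and to vertices determined by subsets of $\A$ alone. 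I restrict this way because the full hypothesis of that lemma (including $\M$) is only available at level $11$, via Lemma \ref{lemma:dtgammaiM}.

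Two small points need checking. First, when $\veps_{j,k}$ is not essential it is not a vertex, so only the essential cases $|j-k| \geq 2$ are considered, as in Lemma \ref{lemma:vepsijv2}. Second, the defining sets must really be subsets of $\A$. For $\alpha_{j}$ they consist of the curves $\alpha_{k,k+1}$ and $\alpha_{k,k-2}$, and likewise for $\veps_{j,k}$, so both lie in $\A$.

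I expect no real obstacle, since the argument is formal once Lemma \ref{lemma:dtgammaiA} is in hand. The only subtle point is to make sure nothing from $\M$ or from higher auxiliary curves is used in the determining sets. If such a curve were used, the bound would degrade to level $12$.
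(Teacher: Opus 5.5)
Your proposal is correct and is essentially the paper's proof. Both write $\alpha_{j}$ and $\veps_{j,k}$ as $\langle A\rangle$ with $A\subset\A$ via Lemmata \ref{lemma:alphai} and \ref{lemma:vepsijv2}, transport this by the automorphism $\dt{\gamma_{i}}^{\pm 1}$, and invoke Lemma \ref{lemma:dtgammaiA} to get $\dt{\gamma_{i}}^{\pm 1}(A)\subset V(\Y^{8})$, hence membership in $V(\Y^{9})$.
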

\begin{proof}
 By Lemmata \ref{lemma:vepsijv2} and \ref{lemma:alphai}, there exist subsets $A_{1}, A_{2} \subset \A$, such that $\alpha_{j} = \langle A_{1} \rangle$ and $\veps_{j,k} = \langle A_{2} \rangle$. Thus $\dt{\gamma_{i}}^{\pm 1}(\alpha_{j}) = \langle \dt{\gamma_{i}}^{\pm 1}(A_{1}) \rangle$ and $\dt{\gamma_{i}}^{\pm 1}(\veps_{i,j}) = \langle \dt{\gamma_{i}}^{\pm 1}(A_{2}) \rangle$.
 
 By Lemma \ref{lemma:dtgammaiA} we then have that $\dt{\gamma_{i}}^{\pm 1}(\alpha_{j})$, $\dt{\gamma_{i}}^{\pm 1}(\veps_{i,j}) \in V(\Y^{9})$ as desired.
\end{proof}

\begin{lemma}\label{lemma:dtgammainujkp1}
 Let $1 \leq i \leq g-1$ be fixed, then we have that $\dt{\gamma_{i}}^{\pm 1}(\nu^{\pm}_{i+1,i+3})$, $\dt{\gamma_{i+3}}^{\pm 1}(\nu^{\pm}_{i+1,i+3}) \in V(\Y^{10})$.
\end{lemma}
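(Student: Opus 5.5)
The plan is to follow the pattern of Corollary \ref{cor:dtgammaialphaivepsij}: write $\nu^{\pm}_{i+1,i+3}$ as $\langle B \rangle$ for a set $B$ whose $\dt{\gamma_{i}}^{\pm 1}$- and $\dt{\gamma_{i+3}}^{\pm 1}$-images all lie in $V(\Y^{9})$. Then $\dt{\gamma_{i}}^{\pm 1}(\nu^{\pm}_{i+1,i+3}) = \langle \dt{\gamma_{i}}^{\pm 1}(B)\rangle \in V(\Y^{10})$, and likewise for $\gamma_{i+3}$. The point is that the set $B$ should avoid the auxiliary curve $\nu$ of Lemma \ref{lemma:nupmij}, whose images we do not yet control. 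It should consist only of curves of the form $\alpha_{k}$ and $\veps_{k,l}$, since their translates are handled by Corollary \ref{cor:dtgammaialphaivepsij}.

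\textbf{Step 1.} Look at the set $A(i+1,i+3)$ from the proof of Lemma \ref{lemma:nupmij}. It is made of curves $\veps_{k,k+2}$, the two curves $\veps_{i+1,i+2}$ and $\veps_{i,i+3}$, and all $\alpha_{k}$. Since $\veps_{i+1,i+2}$ is inessential, only $\veps_{i,i+3}$ actually occurs. All these curves are of the types covered by Corollary \ref{cor:dtgammaialphaivepsij}, so their images under $\dt{\gamma_{i}}^{\pm1}$ and $\dt{\gamma_{i+3}}^{\pm1}$ lie in $V(\Y^{9})$.

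\textbf{Step 2.} The set $A(i+1,i+3)$ does not quite determine $\nu^{\pm}_{i+1,i+3}$. Its common link should consist of the two curves $\nu^{+}_{i+1,i+3}$ and $\nu^{-}_{i+1,i+3}$, together possibly with a curve such as $\veps_{i+1,i+3}$ if it is disjoint from everything listed. In Lemma \ref{lemma:nupmij} this ambiguity is resolved by $\nu$. Here I would instead add further curves $\veps_{k,l}$ or $\alpha_{k,l}$ with small indices near $i+1, i+2, i+3$, chosen to be disjoint from $\nu^{+}_{i+1,i+3}$ but to intersect $\nu^{-}_{i+1,i+3}$, and conversely. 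Because $i+1$ and $i+3$ are adjacent up to the single index $i+2$, the sign of $\nu^{\pm}$ records on which of $H$ or $H'$ the "long" arcs lie. A curve $\veps_{l,m}$ passing through the region between $e_{i+1}$ and $e_{i+2}$ on one polygon only distinguishes the two. The natural candidates are $\veps_{i+1,i+3}$ itself, or some $\veps_{i+2,l}$; which of these is disjoint from one of $\nu^{\pm}_{i+1,i+3}$ but not the other has to be checked against the pictures.

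\textbf{Step 3.} If no $\veps$-curve separates the two signs, I would fall back on $\alpha_{k,l}$ curves with $k$ or $l$ in $\{i+1,i+2,i+3\}$. These are in $V(\Y^{3})$ by Lemma \ref{lemma:alphaij}, but their translates are not controlled yet. In that case I would instead use a different pair of $\alpha$-curves known to lie in $\A$, for instance $\alpha_{i+2,i+3}$ and $\alpha_{i+3,i+1}$. Their translates lie in $V(\Y^{8})$ by Lemma \ref{lemma:dtgammaiA}, which keeps the final level at most $10$.

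The main obstacle is Step 2: showing that the augmented set has exactly one vertex in its common link. This means ruling out every essential curve disjoint from the whole set other than the intended one. I would argue it by noting that the complement of $A(i+1,i+3)$ is a union of pairs of pants together with one four-holed sphere (or one-holed Klein bottle) around indices $i+1, i+2, i+3$. The curves in the link are then the finitely many essential curves in that piece, and the extra curve kills all but one. The symmetric cases for $\dt{\gamma_{i+3}}$ follow in the same way after the analogous choice.
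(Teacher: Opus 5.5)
\textbf{There is a genuine gap in Step 2, and Step 3 does not repair it.} No set $B$ made only of curves $\alpha_k$, $\alpha_{k,l}$ and $\veps_{k,l}$ can uniquely determine $\nu^{+}_{i+1,i+3}$ or $\nu^{-}_{i+1,i+3}$.

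Let $\rho$ be the reflection of $N$ that exchanges $H$ and $H'$, sending each point to its copy in the other polygon. It is compatible with the gluings $e_k\sim e'_k$ and $a_k^{-1}\sim a'_k$, so it is a homeomorphism of $N$ and acts on $\cg{N}$ by automorphisms. Each $\alpha_{k,l}$ and each $\veps_{k,l}$ is the projection of a segment in $H$ together with its copy in $H'$, so $\rho$ fixes every one of them. In particular it fixes every $\alpha_k$ and all of $\A$, including your fallback curves $\alpha_{i+2,i+3}$ and $\alpha_{i+3,i+1}$. On the other hand, directly from the definitions, $\rho(\nu^{+}_{k,l})=\nu^{-}_{k,l}\neq\nu^{+}_{k,l}$.

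If every element of $B$ is $\rho$-fixed and $v=\langle B\rangle$, then $\rho(v)=\langle\rho(B)\rangle=\langle B\rangle=v$. So $\langle B\rangle$ is never $\nu^{\pm}_{i+1,i+3}$; whenever one of the two lies in the common link of $B$, so does the other. Your heuristic that some $\veps_{l,m}$ runs ``on one polygon only'' is exactly where this breaks: $\veps_{l,m}$ is the projection of $e_{l,m}\cup e'_{l,m}$ and is symmetric. This is also why $\Y$ must contain the asymmetric vertex $\nu^{-}_{5,7}$ at all: every rigid expansion of $\A$ alone consists of $\rho$-fixed curves.

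Separately, your last paragraph is wrong: the common link of $A(i+1,i+3)$ is infinite. It contains $\veps_{i,i+2}$ and $\veps_{i+1,i+3}$, and $\dt{\veps_{i+1,i+3}}$ fixes $A(i+1,i+3)$ pointwise while sending $\veps_{i,i+2}$ to infinitely many distinct curves.

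The missing idea is that you do not need to control the image of the symmetry-breaking $\nu$-curve if you choose it disjoint from the twisting curve. Your Step 1 set is exactly the paper's $A\cup E$: all $\alpha_k$ together with $(\E\setminus\{\veps_{i-1,i+1},\veps_{i,i+2},\veps_{i+1,i+3},\veps_{i+2,i+4}\})\cup\{\veps_{i,i+3}\}$. The paper shows
\[
\nu^{-}_{i+1,i+3}=\langle A\cup E\cup\{\nu^{+}_{i+2,i+4}\}\rangle=\langle A\cup E\cup\{\nu^{+}_{i,i+2}\}\rangle,
\]
and the same with the signs of all the $\nu$'s exchanged. Since $i(\gamma_i,\nu^{\pm}_{i+2,i+4})=0=i(\gamma_{i+3},\nu^{\pm}_{i,i+2})$, use the first description for $\dt{\gamma_i}^{\pm1}$ and the second for $\dt{\gamma_{i+3}}^{\pm1}$. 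The $\nu$-curve is then fixed by the twist and stays in $V(\Y^{6})$. The images of $A\cup E$ lie in $V(\Y^{9})$ by Corollary~\ref{cor:dtgammaialphaivepsij}, so the images of $\nu^{\pm}_{i+1,i+3}$ lie in $V(\Y^{10})$.
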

\begin{proof}
 We define the following sets: $$A = \{\alpha_{k}: 1 \leq k \leq g\} \subset V(\Y^{1}),$$ $$E = (\E \setminus \{\veps_{i-1,i+1}, \veps_{i,i+2}, \veps_{i+1,i+3}, \veps_{i+2,i+4}\}) \cup \{\veps_{i,i+3}\}\subset V(\Y^{1}),$$ $$M^{+}_{1} = \{\nu^{+}_{i+2,i+4}\} \subset V(\Y^{6}),$$ $$M^{-}_{1} = \{\nu^{-}_{i+2,i+4}\} \subset V(\Y^{6}),$$ $$M^{+}_{2} = \{\nu^{+}_{i,i+2}\} \subset V(\Y^{6}),$$ $$M^{-}_{2} = \{\nu^{-}_{i,i+2}\} \subset V(\Y^{6}).$$ Then, we have that: $$\nu^{-}_{i+1,i+3} = \langle A \cup E \cup M^{+}_{1} \rangle = \langle A \cup E \cup M^{+}_{2}\rangle,$$ $$\nu^{+}_{i+1,i+3} = \langle A \cup E \cup M^{-}_{1} \rangle = \langle A \cup E \cup M^{-}_{2}\rangle.$$ See Figure \ref{fig:def-numi+1i+3}.
 
 This implies that $$\dt{\gamma_{i}}^{\pm 1}(\nu^{-}_{i+1,i+3}) = \langle \dt{\gamma_{i}}^{\pm 1}(A) \cup \dt{\gamma_{i}}^{\pm 1}(E) \cup \dt{\gamma_{i}}^{\pm 1}(M^{+}_{1}) \rangle,$$
 $$\dt{\gamma_{i+3}}^{\pm 1}(\nu^{-}_{i+1,i+3}) = \langle \dt{\gamma_{i+3}}^{\pm 1}(A) \cup \dt{\gamma_{i+3}}^{\pm 1}(E) \cup \dt{\gamma_{i+3}}^{\pm 1}(M^{+}_{2})\rangle,$$
 $$\dt{\gamma_{i}}^{\pm 1}(\nu^{+}_{i+1,i+3}) = \langle \dt{\gamma_{i}}^{\pm 1}(A) \cup \dt{\gamma_{i}}^{\pm 1}(E) \cup \dt{\gamma_{i}}^{\pm 1}(M^{-}_{1}) \rangle,$$
 $$\dt{\gamma_{i+3}}^{\pm 1}(\nu^{+}_{i+1,i+3}) = \langle \dt{\gamma_{i+3}}^{\pm 1}(A) \cup \dt{\gamma_{i+3}}^{\pm 1}(E) \cup \dt{\gamma_{i+3}}^{\pm 1}(M^{-}_{2})\rangle.$$
 
 By Corollary \ref{cor:dtgammaialphaivepsij}, we have that $\dt{\gamma_{i}}^{\pm 1}(A)$, $\dt{\gamma_{i+3}}^{\pm 1}(A)$, $\dt{\gamma_{i}}^{\pm 1}(E)$, $\dt{\gamma_{i+3}}^{\pm 1}(E) \subset V(\Y^{9})$. Given that $i(\gamma_{i}, \nu^{\pm}_{i+2,i+4}) = i(\gamma_{i+3}, \nu^{\pm}_{i,i+2}) = 0$, we have that $\dt{\gamma_{i}}^{\pm 1}(M^{\pm}_{1}) = M^{\pm}_{1} \subset V(\Y^{6})$, and $\dt{\gamma_{i+3}}^{\pm 1}(M^{\pm}_{2}) = M^{\pm}_{2} \subset V(\Y^{6})$, we can conclude that $\dt{\gamma_{i}}^{\pm 1}(\nu^{\pm}_{i+1,i+3})$, $\dt{\gamma_{i+3}}^{\pm 1}(\nu^{\pm}_{i+1,i+3}) \in V(\Y^{10})$ as desired.
\end{proof}

\begin{figure}[ht]
  \centering
  \includegraphics[height=3.5cm]{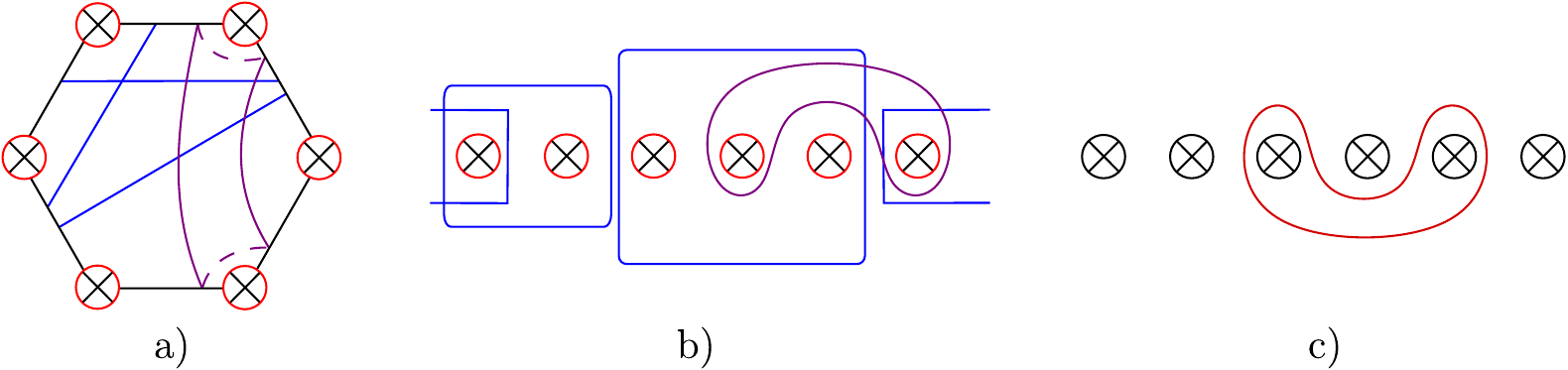}\caption{(a) and (b) depict the curves (from two angles) necessary to uniquely determine $\nu_{i+1,i+3}^{-}$, which is depicted in (c).}\label{fig:def-numi+1i+3}
 \end{figure}

\begin{corollary}\label{cor:dtgammainujkp2}
 Let $1 \leq i \leq g-1$ be fixed, then we have that $\dt{\gamma_{i+1}}^{\pm 1}(\nu^{\pm}_{i+1,i+3})$, $\dt{\gamma_{i+2}}^{\pm 1}(\nu^{\pm}_{i+1,i+3}) \in V(\Y^{11})$.
\end{corollary}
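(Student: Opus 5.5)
Like Lemma \ref{lemma:dtgammainujkp1}, the plan is to write $\nu^{\pm}_{i+1,i+3}$ as $\langle B \rangle$ for a set $B$ whose images under $\dt{\gamma_{i+1}}^{\pm 1}$ and $\dt{\gamma_{i+2}}^{\pm 1}$ already lie in $V(\Y^{10})$. Since automorphisms preserve unique determination, $\dt{\gamma_{i+1}}^{\pm 1}(\nu^{\pm}_{i+1,i+3}) = \langle \dt{\gamma_{i+1}}^{\pm 1}(B)\rangle$ then lies in $V(\Y^{11})$, and the same holds for $\gamma_{i+2}$. The difference from the previous lemma is that $\gamma_{i+1}$ and $\gamma_{i+2}$ sit in the middle of the support of $\nu^{\pm}_{i+1,i+3}$. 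So we can no longer use a curve $\nu^{\mp}$ disjoint from the twisting curve. Instead, the auxiliary curve that pins down $\nu^{\pm}_{i+1,i+3}$ must be one whose translate is controlled by Lemma \ref{lemma:dtgammainujkp1}.

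The key observation is that a curve of the form $\nu^{\pm}_{i+2,i+4}$ is moved by $\gamma_{i+1}$, and Lemma \ref{lemma:dtgammainujkp1} applied with index $i+1$ gives $\dt{\gamma_{i+1}}^{\pm 1}(\nu^{\pm}_{i+2,i+4}) \in V(\Y^{10})$. Likewise, applying the lemma with index $i-1$ gives $\dt{\gamma_{i+2}}^{\pm 1}(\nu^{\pm}_{i,i+2}) \in V(\Y^{10})$, since $\gamma_{(i-1)+3} = \gamma_{i+2}$. So I would reuse the same determining sets as in the proof of Lemma \ref{lemma:dtgammainujkp1}:
\[
\nu^{-}_{i+1,i+3} = \langle A \cup E \cup M^{+}_{1}\rangle = \langle A \cup E \cup M^{+}_{2}\rangle,
\]
and analogously for $\nu^{+}_{i+1,i+3}$ with $M^{-}_{1}$ and $M^{-}_{2}$. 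I would use the first expression (with $M^{\pm}_{1} = \{\nu^{\pm}_{i+2,i+4}\}$) for $\gamma_{i+1}$, and the second (with $M^{\pm}_{2} = \{\nu^{\pm}_{i,i+2}\}$) for $\gamma_{i+2}$.

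The steps are then as follows.
\begin{enumerate}
\item By Corollary \ref{cor:dtgammaialphaivepsij}, the translates $\dt{\gamma_{i+1}}^{\pm 1}(A)$, $\dt{\gamma_{i+1}}^{\pm 1}(E)$, $\dt{\gamma_{i+2}}^{\pm 1}(A)$ and $\dt{\gamma_{i+2}}^{\pm 1}(E)$ all lie in $V(\Y^{9})$, because $A$ and $E$ consist only of curves $\alpha_k$ and $\veps_{j,k}$.
\item By Lemma \ref{lemma:dtgammainujkp1} with the shifted index as above, $\dt{\gamma_{i+1}}^{\pm 1}(M^{\pm}_{1})$ and $\dt{\gamma_{i+2}}^{\pm 1}(M^{\pm}_{2})$ lie in $V(\Y^{10})$.
\item Combining these, each translated determining set lies in $V(\Y^{10})$. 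Hence each of $\dt{\gamma_{i+1}}^{\pm 1}(\nu^{\pm}_{i+1,i+3})$ and $\dt{\gamma_{i+2}}^{\pm 1}(\nu^{\pm}_{i+1,i+3})$ lies in $V(\Y^{11})$.
\end{enumerate}

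The main obstacle is index bookkeeping at the ends of the range. Lemma \ref{lemma:dtgammainujkp1} is stated for $1 \leq i \leq g-1$, so shifting to $i+1$ or $i-1$ may leave that range, for example at $i = g-1$ or $i = 1$. There the relevant twist could be $\dt{\gamma_g}$, which is not in the generating set, even though $\gamma_g$ exists as a curve.

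I would handle this by noting that the proof of Lemma \ref{lemma:dtgammainujkp1} never used the restriction on $i$ beyond Corollary \ref{cor:dtgammaialphaivepsij}. That corollary in turn rests only on Lemma \ref{lemma:dtgammaiA}, whose proof is symmetric in the cyclic index. If that symmetry fails, the fallback is to swap roles: use the $M_{2}$ expression for $\gamma_{i+1}$ where $M_{1}$ is unavailable. This requires checking that $\dt{\gamma_{i+1}}^{\pm 1}(\nu^{\pm}_{i,i+2})$ is covered by Lemma \ref{lemma:dtgammainujkp1} with index $i-2$, since $\gamma_{(i-2)+3} = \gamma_{i+1}$. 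One also has to confirm that both choices of $M$ were indeed shown in the earlier figure to uniquely determine $\nu^{\pm}_{i+1,i+3}$, which the previous proof asserts.
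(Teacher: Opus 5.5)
Your argument is exactly the paper's. You reuse the determining sets $A\cup E\cup M^{\pm}_{1}$ for $\dt{\gamma_{i+1}}^{\pm 1}$ and $A\cup E\cup M^{\pm}_{2}$ for $\dt{\gamma_{i+2}}^{\pm 1}$; Corollary \ref{cor:dtgammaialphaivepsij} puts the translates of $A$ and $E$ in $V(\Y^{9})$, and Lemma \ref{lemma:dtgammainujkp1} at indices $i+1$ and $i-1$ puts the translated $\nu$'s in $V(\Y^{10})$.

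On the index-range worry, your first answer (the earlier arguments are cyclic in the index) is the right one. Your fallback is miscounted and should be dropped: Lemma \ref{lemma:dtgammainujkp1} at index $i-2$ controls $\dt{\gamma_{i+1}}^{\pm 1}(\nu^{\pm}_{i-1,i+1})$, not $\dt{\gamma_{i+1}}^{\pm 1}(\nu^{\pm}_{i,i+2})$, and the latter is itself an instance of this corollary (at index $i-1$), so that route would be circular.
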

\begin{proof}
 Let $A$, $E$, $M^{\pm}_{1}$ and $M^{\pm}_{2}$ be as in the proof of Lemma \ref{lemma:dtgammainujkp1}. Then, as in the proof of Lemma \ref{lemma:dtgammainujkp1}, we have that: 
 $$\dt{\gamma_{i+1}}^{\pm 1}(\nu^{-}_{i+1,i+3}) = \langle \dt{\gamma_{i+1}}^{\pm 1}(A) \cup \dt{\gamma_{i+1}}^{\pm 1}(E) \cup \dt{\gamma_{i+1}}^{\pm 1}(M^{+}_{1}) \rangle,$$
 $$\dt{\gamma_{i+2}}^{\pm 1}(\nu^{-}_{i+1,i+3}) = \langle \dt{\gamma_{i+2}}^{\pm 1}(A) \cup \dt{\gamma_{i+2}}^{\pm 1}(E) \cup \dt{\gamma_{i+2}}^{\pm 1}(M^{+}_{2})\rangle,$$
 $$\dt{\gamma_{i+1}}^{\pm 1}(\nu^{+}_{i+1,i+3}) = \langle \dt{\gamma_{i+1}}^{\pm 1}(A) \cup \dt{\gamma_{i+1}}^{\pm 1}(E) \cup \dt{\gamma_{i+1}}^{\pm 1}(M^{-}_{1}) \rangle,$$
 $$\dt{\gamma_{i+2}}^{\pm 1}(\nu^{+}_{i+1,i+3}) = \langle \dt{\gamma_{i+2}}^{\pm 1}(A) \cup \dt{\gamma_{i+2}}^{\pm 1}(E) \cup \dt{\gamma_{i+2}}^{\pm 1}(M^{-}_{2})\rangle.$$
 
 Again, by Corollary \ref{cor:dtgammaialphaivepsij} we have that $\dt{\gamma_{i+1}}^{\pm 1}(A)$, $\dt{\gamma_{i+2}}^{\pm 1}(A)$, $\dt{\gamma_{i+1}}^{\pm 1}(E)$, $\dt{\gamma_{i+2}}^{\pm 1}(E) \subset V(\Y^{9})$. Now, by Lemma \ref{lemma:dtgammainujkp1} we have that $\dt{\gamma_{i+1}}^{\pm 1}(M^{\pm}_{1}), \dt{\gamma_{i+2}}^{\pm 1}(M^{\pm}_{2}) \subset V(\Y^{10})$. Therefore, $\dt{\gamma_{i+1}}^{\pm 1}(\nu^{\pm}_{i+1,i+3})$, $\dt{\gamma_{i+2}}^{\pm 1}(\nu^{\pm}_{i+1,i+3}) \in V(\Y^{11})$ as desired.
\end{proof}

Lemma \ref{lemma:dtgammaiM} follows directly from Lemma \ref{lemma:dtgammainujkp1} and Corollary \ref{cor:dtgammainujkp2}.


\subsection{$y^{\pm 1}(\Y)\subset \Y^{13}$}\label{subsec:y(Y)}

In this subsection we prove the following lemma.
\begin{lemma}\label{lemma:y(Y)}
We have that $y^{\pm 1}(\A)$, $y^{\pm 1}(\M) \subset \Y^{13}$.
\end{lemma}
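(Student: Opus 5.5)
We follow the same strategy as in Subsection \ref{subsec:dtgammaiY}. The homeomorphism $y$ is supported in the Klein bottle $K$ bounded by $\veps_{g-2,g}$, and in our model $K$ contains exactly the crosscaps indexed by $g-1$ and $g$. Hence $y^{\pm 1}$ fixes every curve of $\Y$ disjoint from $\veps_{g-2,g}$. So first we list the finitely many elements of $\A \cup \M$ that intersect $\veps_{g-2,g}$, namely those $\alpha_{k,l}$ with exactly one of $k,l$ in $\{g-1,g\}$ (modulo $g$). Since $g \geq 6$, the curve $\nu^{-}_{5,7}$ is disjoint from $K$ except possibly when $g=6,7$, where it has to be handled explicitly. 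Every other vertex of $\Y$ is fixed by $y^{\pm1}$, and so lies in $V(\Y) \subset V(\Y^{13})$.

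For the moved one-sided curves, we use that $y$ is a crosscap slide of the crosscap $g-1$ through $g$. Its effect on a one-sided curve $\alpha_{k,l}$ that passes through one of these crosscaps is again a curve that can be drawn in the model: either another curve of the form $\alpha_{k',l'}$, or a curve supported in a subsurface described by $\veps$'s and $\alpha_j$'s. This is analogous to the identities $\dt{\gamma_i}^{-1}(\alpha_{i+1,i+2})=\alpha_{i,i+2}$ used earlier. For the curves that land on some $\alpha_{k',l'}$, Lemma \ref{lemma:alphaij} places them in $V(\Y^{3})$. For the rest, we uniquely determine them, as in Lemmata \ref{lemma:dtgammaiAp1}--\ref{lemma:dtgammaiAp3}, by sets of the following shape: $\E$ minus the $\veps_{k,k+2}$ that meet the image curve, together with a suitable $\veps_{k,l}\in V(\Y^{1})$, the $\alpha_j$ disjoint from it, and one or two auxiliary curves such as $\mu^{1}_{j}, \mu^{2}_{j}, \gamma_j, \eta$ or $\gamma'_j$ from previous levels. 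This should give $y^{\pm1}(\A)\subset V(\Y^{m})$ for some $m\le 12$.

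For $\M$, we mimic Lemma \ref{lemma:dtgammainujkp1}. We write $\nu^{-}_{5,7}=\langle A\cup E\cup M\rangle$ with $A=\{\alpha_k\}$, $E\subset V(\Y^1)$ the appropriate $\veps$'s, and $M$ a single $\nu^{\pm}$ curve in $V(\Y^6)$ chosen disjoint from $K$; Table \ref{table:nu} gives enough freedom in this choice. Then
$$y^{\pm1}(\nu^{-}_{5,7})=\langle y^{\pm1}(A)\cup y^{\pm1}(E)\cup M\rangle.$$
Since each $\alpha_k$ and $\veps_{k,l}$ is uniquely determined by a subset of $\A$ (Lemmata \ref{lemma:vepsijv2} and \ref{lemma:alphai}), the bound on $y^{\pm1}(\A)$ places $y^{\pm1}(A)$ and $y^{\pm1}(E)$ one level higher. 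This yields $y^{\pm1}(\nu^{-}_{5,7})\in V(\Y^{13})$ provided $y^{\pm1}(\A)\subset V(\Y^{11})$. If $\nu^{-}_{5,7}$ is itself disjoint from $K$ (when $g\ge 8$), this step is trivial.

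The main obstacle is the explicit identification of $y^{\pm1}(\alpha_{g,1})$, $y^{\pm1}(\alpha_{g-1,g})$, $y^{\pm1}(\alpha_{g-1,g-3})$ and the like. A crosscap slide reverses the orientation of a neighbourhood of the sliding crosscap, so these images are not as symmetric as Dehn twist images. We expect that at least one of them needs a new auxiliary curve, analogous to $\gamma'_i$, built in two stages. That is where the level count climbs to $12$, and then $13$ for $\M$.
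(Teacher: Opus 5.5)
Your proposal is a plan rather than a proof: the steps that carry the content are missing, and the $\M$ step would fail as stated.

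\textbf{The set $\A$.} You never identify the images $y^{\pm1}(\alpha_{g-2,g-1})$, $y^{\pm1}(\alpha_{g,1})$, $y^{\pm1}(\alpha_{g-1,g-3})$, $y^{\pm1}(\alpha_{1,g-1})$, and you never exhibit sets that uniquely determine them. ``This should give $m\le 12$'' is a hope; the paper itself only reaches level $13$ for these curves. The missing idea is to avoid describing a crosscap-slide image directly. The paper writes each of these four images as $\dt{\gamma_{g-1}}$ of a new auxiliary curve $\alpha^{\pm}_{i,i+2}$ or $\alpha^{\pm}_{i,i-2}$ in $V(\Y^{8})$. These auxiliary curves are built from the $\alpha_k$, suitable $\veps$'s, $\nu^{\mp}_{i-1,i+1}$ and curves $\mu^{\pm}_{i,i\pm2}\in V(\Y^{7})$. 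The already-proved Dehn twist bounds are then pushed through these determining sets (Lemmata \ref{lemma:dtgammaimupmiip2m2} and \ref{lemma:dtgammaialphapmiip2m2}). The remaining two curves are handled by $y^{\pm1}(\alpha_{g,g-2})=\alpha_g$ and by expressing $y^{\pm1}(\alpha_{g-1,g})$ as a $\dt{\gamma_{g-1}}$-image lying in $V(\Y^{8})$.

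Your list of moved curves is also wrong. It is not true that $y$ fixes every curve disjoint from $\veps_{g-2,g}$: curves inside $K$, such as $\alpha_{g-1,g}$ (both indices in $\{g-1,g\}$), are moved. Conversely, $\alpha_{2,g}$, a loop from crosscap $2$ around crosscap $1$, has one index in $\{g-1,g\}$ but is disjoint from $K$ and fixed. The correct criterion is disjointness from $K$, not from $\partial K$.

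\textbf{The set $\M$.} First, the case $g=8$ is not trivial. There $\nu^{-}_{5,7}$ and $K$ both contain crosscap $7$, so they intersect; the paper treats $g=8$ separately.

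More seriously, for $g=6,7$ your scheme cannot work for any choice of $M$. In those cases $\nu^{+}_{5,7}$ and $\nu^{-}_{5,7}$ differ only by passing over or under crosscap $6$. Pushing the crosscap of $K$ that they enclose once around crosscap $6$ exchanges them, and this push is supported in $K$; indeed $\nu^{-}_{5,7}=\dt{\veps_{g-2,g}}^{\pm1}(\nu^{+}_{5,7})$. Consequences:
\begin{itemize}
\item Every curve disjoint from $\partial K$ has the same intersection number with $\nu^{+}_{5,7}$ and with $\nu^{-}_{5,7}$.
\item So any set uniquely determining $\nu^{-}_{5,7}$ must contain a curve crossing $\partial K$.
\item No $\alpha_k$, and no $\veps$ in the link of $\nu^{-}_{5,7}$, crosses $\partial K$, so this role falls to $M$.
\item Any curve crossing $\partial K$ is moved by $y$, since $y^{2}=\dt{\veps_{g-2,g}}$.
\end{itemize}
Hence no $M$ from Table \ref{table:nu} that is fixed by $y$ can do the job. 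This is why the paper identifies the images directly:
\begin{itemize}
\item for $g=7$, $y(\nu^{-}_{5,7})=\dt{\gamma_6}(\eta^{-}_{6,5})$ and $y^{-1}(\nu^{-}_{5,7})=\dt{\gamma_6}(\veps_{4,6})$;
\item for $g=6$, $y(\nu^{-}_{5,1})=\mu^{1}_{6}$ and $y^{-1}(\nu^{-}_{5,1})=\dt{\gamma_5}(\eta^{-}_{6,1})$;
\item for $g=8$, a direct determining set gives $y(\nu^{-}_{5,7})\in V(\Y^{7})$, and $y^{-1}(\nu^{-}_{5,7})=\dt{\gamma_7}(\kappa^{-}_{5,8})$.
\end{itemize}

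Finally, your level count does not reach $13$. Routing through $y^{\pm1}(\A)$ costs two extra levels, so your own estimate of $\Y^{12}$ would give $\Y^{14}$, and the bound $\Y^{13}$ that the paper obtains for $y^{\pm1}(\A)$ would give $\Y^{15}$.
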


The proof of Lemma \ref{lemma:y(Y)} needs several auxiliary curves and lemmata. We begin with two corollaries of Lemma \ref{lemma:dtgammaiA}  and Corollary \ref{cor:dtgammaialphaivepsij}.

\begin{corollary}\label{coro:dtgammaimuj1}
For all $1 \leq i \leq g$ and for all $1\leq j \leq g$ we have that $\dt{\gamma_i}^{\pm}(\mu_{j}^{1})\in V(\Y^{10})$.
\end{corollary}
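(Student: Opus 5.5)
The plan is the same transport argument used in Corollary \ref{cor:dtgammaialphaivepsij}. Recall that $\mu^{1}_{j}$ was defined as $\mu^{1}_{j} = \langle E_{1} \cup E_{2} \cup A_{1} \cup A_{2} \rangle$, where
\begin{itemize}
 \item $E_{1} \cup E_{2}$ consists of curves of the form $\veps_{k,l}$;
 \item $A_{1} = \{\alpha_{j,j+1}, \alpha_{j,j-2}\} \subset \A$;
 \item $A_{2} = \{\alpha_{k}\}_{k \neq j}$.
\end{itemize}
Since $\mcg{N}$ acts on $\cg{N}$ by automorphisms, uniquely determining sets are preserved: for any mapping class $h$ we have $h(\mu^{1}_{j}) = \langle h(E_{1}) \cup h(E_{2}) \cup h(A_{1}) \cup h(A_{2}) \rangle$. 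We apply this with $h = \dt{\gamma_{i}}^{\pm 1}$.

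The key step is to check that every curve in the translated set lies in $V(\Y^{9})$.
\begin{itemize}
 \item For the curves $\veps_{k,l}$ in $E_{1} \cup E_{2}$, Corollary \ref{cor:dtgammaialphaivepsij} gives $\dt{\gamma_{i}}^{\pm 1}(\veps_{k,l}) \in V(\Y^{9})$.
 \item For the curves $\alpha_{k}$ in $A_{2}$, the same corollary gives $\dt{\gamma_{i}}^{\pm 1}(\alpha_{k}) \in V(\Y^{9})$.
 \item For the two curves in $A_{1} \subset \A$, Lemma \ref{lemma:dtgammaiA} gives $\dt{\gamma_{i}}^{\pm 1}(A_{1}) \subset V(\Y^{8}) \subset V(\Y^{9})$.
\end{itemize}
Hence $\dt{\gamma_{i}}^{\pm 1}(\mu^{1}_{j})$ is uniquely determined by a subset of $V(\Y^{9})$. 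By the definition of rigid expansion, it therefore belongs to $V(\Y^{10})$.

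The main obstacle is the index range. The statement allows $1 \leq i \leq g$, whereas Lemma \ref{lemma:dtgammaiA} and Corollary \ref{cor:dtgammaialphaivepsij} are stated only for $1 \leq i \leq g-1$. I would argue that the proofs of those results are cyclically symmetric in the index, with all subindices taken modulo $g$: the restriction $i \leq g-1$ only reflects which twists appear in Szepietowski's generating set, not any step of the arguments. Running those proofs verbatim for $i = g$ then covers the remaining case.

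Alternatively, if one only needs $i \leq g-1$ for the application to the generating set $\G$, the argument above already suffices. I would also note that when $i(\gamma_{i}, \mu^{1}_{j}) = 0$ the statement is trivial, since then $\dt{\gamma_{i}}^{\pm 1}(\mu^{1}_{j}) = \mu^{1}_{j} \in V(\Y^{2})$.
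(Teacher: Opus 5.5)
Your proposal is correct and matches the paper's proof essentially verbatim. You transport the uniquely determining set $E_1\cup E_2\cup A_1\cup A_2$ of $\mu^1_j$ by $\dt{\gamma_i}^{\pm 1}$, apply Lemma~\ref{lemma:dtgammaiA} to $A_1$ and Corollary~\ref{cor:dtgammaialphaivepsij} to $E_1$, $E_2$, $A_2$, and land in $V(\Y^{10})$. Your extra remark about covering $i=g$ addresses an index-range mismatch that the paper passes over silently, and it holds up: the auxiliary curves used for Lemma~\ref{lemma:dtgammaiA} are built from $\A$ alone, and $\A$ is invariant under the order-$g$ rotation of the model, which conjugates $\dt{\gamma_i}^{\pm1}$ to $\dt{\gamma_{i+1}}^{\pm1}$.
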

\begin{proof}
Let $E_1$, $E_2$, $A_1$, $A_2$ as in the definition of $\mu_{j}^{1}$ (see Subsubsection \ref{subsubsec:mui}). We have that:
$$\dt{\gamma_{i}}^{\pm 1}(\mu_{j}^{1}) = \langle \dt{\gamma_{i}}^{\pm 1}(E_1) \cup \dt{\gamma_{i}}^{\pm 1}(E_2) \cup \gamma_{i}^{\pm 1}(A_1)\cup \dt{\gamma_{i}}^{\pm 1}(A_2) \rangle.$$
By Lemma \ref{lemma:dtgammaiA} we have that $\dt{\gamma_i}^{\pm}(A_1)\subset V(\Y^{8})$ and by Corollary \ref{cor:dtgammaialphaivepsij} we have that $\dt{\gamma_{i}}^{\pm 1}(E_1)$, $\dt{\gamma_{i}}^{\pm 1}(E_2)$, $\dt{\gamma_{i}}^{\pm 1}(A_2) \subset V(\Y^{9})$. Therefore $\dt{\gamma_{i}}^{\pm 1}(\mu_{j}^{1})\in V(\Y^{10})$.
\end{proof}

\begin{corollary}\label{coro:dtgammaialphajk}
For all $1\leq i \leq g$ and for all $1\leq j,k \leq g$ we have that $\dt{\gamma_i}^{\pm}(\alpha_{j,k})\in V(\Y^{11})$.
\end{corollary}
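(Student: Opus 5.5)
We follow the same pattern as Corollary \ref{coro:dtgammaimuj1}: write $\alpha_{j,k}$ as the unique vertex determined by curves whose $\dt{\gamma_i}^{\pm 1}$-images are already known to lie in low rigid expansions, and then use that automorphisms preserve unique determination, i.e. $\dt{\gamma_i}^{\pm 1}(\langle B\rangle) = \langle \dt{\gamma_i}^{\pm 1}(B)\rangle$.

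We split into two cases according to the proof of Lemma \ref{lemma:alphaij}. If $k \in \{j-2,j-1,j,j+1\}$, then $\alpha_{j,k}$ is either isotopic to $\alpha_j$ or lies in $\A$. In the first situation Corollary \ref{cor:dtgammaialphaivepsij} gives $\dt{\gamma_i}^{\pm 1}(\alpha_{j,k}) \in V(\Y^{9})$; in the second, Lemma \ref{lemma:dtgammaiA} gives $\dt{\gamma_i}^{\pm 1}(\alpha_{j,k}) \in V(\Y^{8})$. Both are contained in $V(\Y^{11})$.

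Otherwise, take the sets $E_1, E_2, E_3, A, M$ from the proof of Lemma \ref{lemma:alphaij} (with indices $j,k$ in place of $i,j$), so that
$$\alpha_{j,k} = \langle E_1\cup E_2\cup E_3\cup A\cup M\rangle,$$
where $E_1\cup E_2\cup E_3$ consists of curves $\veps_{\cdot,\cdot}$, $A$ of curves $\alpha_l$, and $M=\{\mu^1_j\}$. Applying $\dt{\gamma_i}^{\pm 1}$ gives
$$\dt{\gamma_i}^{\pm 1}(\alpha_{j,k}) = \langle \dt{\gamma_i}^{\pm 1}(E_1\cup E_2\cup E_3\cup A\cup M)\rangle .$$
Corollary \ref{cor:dtgammaialphaivepsij} places the images of $E_1\cup E_2\cup E_3\cup A$ in $V(\Y^{9})$, and Corollary \ref{coro:dtgammaimuj1} places $\dt{\gamma_i}^{\pm 1}(\mu^1_j)$ in $V(\Y^{10})$. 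Hence the image lies in $V(\Y^{11})$.

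The main point needing care is the range of $i$. The earlier lemmata are stated for $1\le i\le g-1$, whereas here $i$ runs up to $g$. For $i=g$ we would argue as the paper does implicitly, using the cyclic symmetry of the model under $i\mapsto i+1$ (mod $g$), under which all the defining configurations are invariant; or we would simply restrict to the range in which Corollaries \ref{cor:dtgammaialphaivepsij} and \ref{coro:dtgammaimuj1} apply. Beyond this, the only genuine content is the unique-determination identity from Lemma \ref{lemma:alphaij}, which we take as given.
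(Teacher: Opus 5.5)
Your proposal is correct and is essentially the paper's own proof. It uses the same case split according to whether $k\in\{j-2,j-1,j,j+1\}$, the same reuse of the sets $E_1,E_2,E_3,A,M$ from Lemma~\ref{lemma:alphaij}, and the same appeal to Corollaries~\ref{cor:dtgammaialphaivepsij} and~\ref{coro:dtgammaimuj1} to conclude membership in $V(\Y^{11})$.

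Your level count is slightly more accurate than the paper's, which writes $V(\Y^{8})$ where Corollary~\ref{cor:dtgammaialphaivepsij} only gives $V(\Y^{9})$; the discrepancy is harmless. Your remark about $i=g$ flags a point the paper leaves implicit, and the rotation of the model does settle it: it preserves $\A$, every curve used here is built from $\A$ alone, and both twist directions are covered by the statement.
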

\begin{proof}
Let $1 \leq i \leq g$ fixed and $1\leq j,k \leq g$. If $k\in\{j-2,j-1,j,j+1 \}$ then $\alpha_{j,k}$ is either $\alpha_j$ or is an element of $\A$, then by Lemma \ref{lemma:dtgammaiA} and Corollary \ref{cor:dtgammaialphaivepsij} we have that  $\dt{\gamma_i}^{\pm}(\alpha_{j,k})\in V(\Y^{8})$ or  $\dt{\gamma_i}^{\pm}(\alpha_{j,k})\in V(\Y^{9})$. Otherwise let $E_1$, $E_2$, $E_3$, $A$ and $M$ as in the proof on  Lemma \ref{lemma:alphaij}. We have that:
$$\dt{\gamma_{i}}^{\pm 1}(\alpha_{j,k}) = \langle \dt{\gamma_{i}}^{\pm 1}(E_1) \cup \dt{\gamma_{i}}^{\pm 1}(E_2) \cup \dt{\gamma_{i}}^{\pm 1}(E_3) \cup \gamma_{i}^{\pm 1}(A)\cup \dt{\gamma_{i}}^{\pm 1}(M) \rangle.$$
By Corollary \ref{cor:dtgammaialphaivepsij} we have that $\dt{\gamma_{i}}^{\pm 1}(E_1)$, $\dt{\gamma_{i}}^{\pm 1}(E_2)$, $\dt{\gamma_{i}}^{\pm 1}(E_3)$, $\dt{\gamma_{i}}^{\pm 1}(A) \subset V(\Y^{8})$ and by Corollary \ref{coro:dtgammaimuj1} we have that $\dt{\gamma_{i}}^{\pm 1}(M)\in V(\Y^{10})$. Therefore $\dt{\gamma_{i}}^{\pm 1}(\alpha_{j,k})\in V(\Y^{11})$.
\end{proof}

For $1 \leq i \leq g$ we define $\mu_{i,i+2}^{\pm}$ and $\mu_{i,i-2}^{\pm}$ as follows (the subindices are modulo $g$). Define the sets:
$$ A = \{\alpha_k : k\neq i \} \subset V(\Y^1),$$
$$E_1 = (\E \setminus \{\veps_{i-3,i-1}, \veps_{i-2,i}, \veps_{i-1,i+1}, \veps_{i,i+2}, \veps_{i+1,i+3}\}) \cup \{\veps_{i-2,i+2}\}\subset V(\Y^{1}),$$
$$E_2 = (\E \setminus \{\veps_{i-4,i-2}, \veps_{i-3,i-1}, \veps_{i-2,i}, \veps_{i-1,i+1}, \veps_{i,i+2}\}) \cup \{\veps_{i-3,i+1}\}\subset V(\Y^{1}),$$
$$A_{1} = \{\alpha_{i,i-2},\alpha_{i,i-3}\}\subset V(\Y^{3}), \qquad A_{2} = \{\alpha_{i,i+1},\alpha_{i,i+2}\}\subset V(\Y^{3}),$$
$$M_{1}^{+} = \{\nu_{i+1,i+3}^{+}\} \subset V(\Y^{6}), \qquad M_{2}^{+} = \{\nu_{i-3,i-1}^{+}\} \subset V(\Y^{6}), $$
$$M_{1}^{-} = \{\nu_{i+1,i+3}^{-}\}\subset V(\Y^{6}), \qquad M_{2}^{-} = \{\nu_{i-3,i-1}^{-}\}\subset V(\Y^{6}).$$
Then
$$\mu_{i,i+2}^{+} := \langle A \cup E_{1} \cup A_{1} \cup M_{1}^{-}\rangle \subset V(\Y^{7}),$$
$$\mu_{i,i+2}^{-} := \langle A \cup E_{1} \cup A_{1} \cup M_{1}^{+} \rangle \subset V(\Y^{7}),$$
$$\mu_{i,i-2}^{+} := \langle A \cup E_{2} \cup A_{2} \cup M_{2}^{-}\rangle \subset V(\Y^{7}),$$
$$\mu_{i,i-2}^{-} := \langle A \cup E_{2} \cup A_{2} \cup M_{2}^{+} \rangle \subset V(\Y^{7}).$$
See Figure \ref{fig:Def-muii+2pm}.

\begin{figure}[ht]
  \centering
  \includegraphics[height=3.5cm]{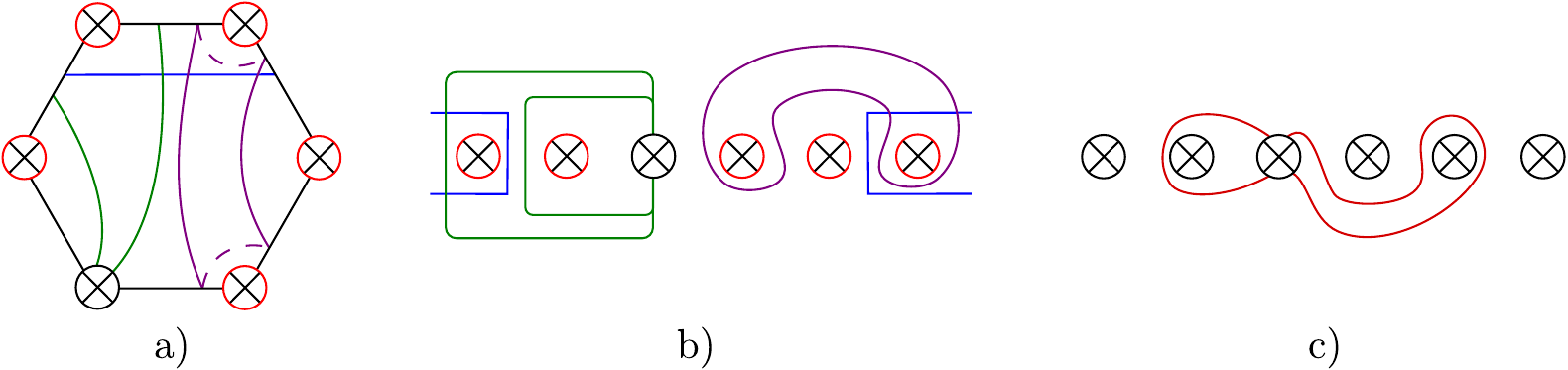}\caption{In (a) and (b) the curves of the set $A$ are in red, those of the set $E_1$ in blue, those of the set $A_1$ in green, and the curve of the set $M_{1}^{+}$ in purple. These curves uniquely determine the curve $\mu_{3,5}^{-}$ shown in (c).}\label{fig:Def-muii+2pm}
 \end{figure}

\begin{lemma}\label{lemma:dtgammaimupmiip2m2}
For $1\leq i \leq g$ fixed, we have that $\dt{\gamma_i}(\mu_{i,i+2}^{\pm})$, $\dt{\gamma_{i+1}}(\mu_{i,i+2}^{\pm})$, $\dt{\gamma_{i-1}}(\mu_{i,i-2}^{\pm})$, $\dt{\gamma_{i-2}}(\mu_{i,i-2}^{\pm}) \in V(\Y^{12}).$
\end{lemma}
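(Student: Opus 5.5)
We follow the same pattern as in Lemma \ref{lemma:dtgammainujkp1} and Corollary \ref{coro:dtgammaialphajk}. Since $\tau = \dt{\gamma_k}$ acts by automorphisms, $\langle B \rangle = v$ implies $\tau(v) = \langle \tau(B) \rangle$. Applying this to the defining sets of $\mu^{\pm}_{i,i+2}$ gives
$$\dt{\gamma_k}(\mu^{+}_{i,i+2}) = \langle \dt{\gamma_k}(A) \cup \dt{\gamma_k}(E_1) \cup \dt{\gamma_k}(A_1) \cup \dt{\gamma_k}(M_1^{-}) \rangle,$$
and similarly for $\mu^{-}_{i,i+2}$ (with $M_1^{+}$) and for $\mu^{\pm}_{i,i-2}$ (with $E_2, A_2, M_2^{\mp}$). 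It therefore suffices to show that each of these image sets lies in $V(\Y^{11})$, for $k\in\{i,i+1\}$ in the first case and $k\in\{i-1,i-2\}$ in the second.

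For the sets $A$, $E_1$ and $E_2$, which consist of curves $\alpha_k$ and $\veps_{j,l}$, Corollary \ref{cor:dtgammaialphaivepsij} places the images in $V(\Y^{9})$. The sets $A_1$ and $A_2$ consist of curves $\alpha_{j,l}$, and Corollary \ref{coro:dtgammaialphajk} places their images in $V(\Y^{11})$.

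For the $\nu$-curves we first check disjointness. The curve $\gamma_i$ meets only the sides $a_i, a_{i+1}$, while $\nu^{\pm}_{i+1,i+3}$ meets only the $e$-sides $e_i,e_{i+1},e_{i+2},e_{i+3}$ (and their primed copies). Arguing exactly as in the proof of Lemma \ref{lemma:dtgammainujkp1}, where $i(\gamma_{i}, \nu^{\pm}_{i+2,i+4})=0$ was used, one checks from the model that $\gamma_i$ is disjoint from $\nu^{\pm}_{i+1,i+3}$, and symmetrically that $\gamma_{i-2}$ is disjoint from $\nu^{\pm}_{i-3,i-1}$. In these cases the twist fixes $M$, which stays in $V(\Y^{6})$. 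In the remaining cases the relevant $\gamma_k$ is $\gamma_{i+1}$ acting on $\nu^{\pm}_{i+1,i+3}$, or $\gamma_{i-1}$ acting on $\nu^{\pm}_{i-3,i-1}$. The first is Corollary \ref{cor:dtgammainujkp2} with index shift $i\mapsto i$. The second is Lemma \ref{lemma:dtgammainujkp1} applied with index $i-4$, since $\gamma_{(i-4)+3} = \gamma_{i-1}$ and $\nu_{i-3,i-1}$ is $\nu_{(i-4)+1,(i-4)+3}$. In either case the image lies in $V(\Y^{11})$.

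Combining, all determining sets lie in $V(\Y^{11})$, so each twisted $\mu$ lies in $V(\Y^{12})$. The main obstacle is the disjointness check for $\gamma_i$ and $\gamma_{i-2}$. If either intersection is nonzero, we fall back on Lemma \ref{lemma:dtgammainujkp1}: it gives $\dt{\gamma_{i}}^{\pm1}(\nu^{\pm}_{i+1,i+3})\in V(\Y^{10})$ directly, and with index $i-3$ it covers $\gamma_{i-2}$ acting on $\nu_{i-2,i}$. Either way the bound $\Y^{11}$ holds. A secondary point is to confirm, via the proof of Lemma \ref{lemma:dtgammaiA}, that the restriction $1\le i\le g-1$ in these earlier results covers the indices needed modulo $g$.
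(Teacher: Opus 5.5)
Your strategy is the paper's: push the defining sets of $\mu^{\pm}_{i,i\pm 2}$ forward by the twist and show each image lies in $V(\Y^{11})$. Your handling of $A$, $E_1$, $E_2$ (Corollary \ref{cor:dtgammaialphaivepsij}) and of $A_1$, $A_2$ (Corollary \ref{coro:dtgammaialphajk}) is fine. The paper is slightly sharper there, since it observes that some of these sets are fixed by the relevant twists.

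\textbf{The gap is in the $\nu$-curves.} Your disjointness claims are false. Linear segments with endpoints on different sides of the polygon can still cross. In $H$, the segment $c_i$ cuts off the side $e_i$ from all other sides. But $\nu^{+}_{i+1,i+3}$ contains a segment from $e_i$ to $e_{i+3}$, so the two cross, and the same happens in $H'$.

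The crossing is essential. $\nu^{\pm}_{i+1,i+3}$ is separating and bounds a one-holed Klein bottle containing crosscaps $i+1$ and $i+3$. The curve $\gamma_i$ meets $\alpha_{i+1}$ (inside) and $\alpha_i$ (outside) once each, so it cannot be isotoped into either side. Likewise, $\nu^{\pm}_{i-3,i-1}$ contains crosscap $i-1$ but not $i-2$, so it meets $\gamma_{i-2}$ essentially. These are exactly the intersecting configurations that Lemma \ref{lemma:dtgammainujkp1} and Corollary \ref{cor:dtgammainujkp2} were designed for. So "the twist fixes $M$" is wrong in both cases.

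Your fallback repairs the $\gamma_i$ case. Lemma \ref{lemma:dtgammainujkp1} at index $i$ gives $\dt{\gamma_i}(\nu^{\pm}_{i+1,i+3})\in V(\Y^{10})$, which is what the paper uses.

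The fallback does not repair the $\gamma_{i-2}$ case. Lemma \ref{lemma:dtgammainujkp1} at index $i-3$ is about $\dt{\gamma_{i-3}}$ and $\dt{\gamma_{i}}$ acting on $\nu^{\pm}_{i-2,i}$. That is neither the twist nor the curve in $M_2^{\pm}=\{\nu^{\pm}_{i-3,i-1}\}$. What you need is Corollary \ref{cor:dtgammainujkp2} at index $i-4$. Since $\gamma_{(i-4)+2}=\gamma_{i-2}$ and $\nu^{\pm}_{(i-4)+1,(i-4)+3}=\nu^{\pm}_{i-3,i-1}$, it gives $\dt{\gamma_{i-2}}(M_2^{\pm})\subset V(\Y^{11})$.

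Your other two cases were already right: the $\gamma_{i+1}$ case via Corollary \ref{cor:dtgammainujkp2} at index $i$, and the $\gamma_{i-1}$ case via Lemma \ref{lemma:dtgammainujkp1} at index $i-4$. With the $\gamma_{i-2}$ correction, every determining set lies in $V(\Y^{11})$, and your argument coincides with the paper's proof.
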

\begin{proof}
Let $1\leq i \leq g$ fixed. There exist subsets $A$, $E_1$, $A_1$, $M_{1}^{+}$, $M_{1}^{-}\subset V(\Y^{6})$ such that  
$$\mu_{i,i+2}^{+} = \langle A\cup E_1\cup A_1\cup M_{1}^{-}\rangle,$$  
$$\mu_{i,i+2}^{-} = \langle A\cup E_1\cup A_1\cup M_{1}^{+}\rangle.$$
We have that $\dt{\gamma_i}(E_1) = \dt{\gamma_{i+1}}(E_1)= E_1\subset V(\Y^1)$ and that $\dt{\gamma_{i+1}}(A_1)=A_1 \subset V(\Y^{3})$. By Corollary \ref{coro:dtgammaialphajk} we have that $\dt{\gamma_i}(A_1)\subset V(\Y^{11})$. By Corollary \ref{cor:dtgammaialphaivepsij} we have that $\dt{\gamma_i}(A)$, $\dt{\gamma_{i+1}}(A)\subset V(\Y^{9})$. Finally by Lemma \ref{lemma:dtgammainujkp1} and Corollary \ref{cor:dtgammainujkp2} we have that $\dt{\gamma_{i}}(M_{1}^{\pm})\subset V(\Y^{10})$ and $\dt{\gamma_{i+1}}(M_{1}^{\pm})\subset V(\Y^{11})$. Therefore $\dt{\gamma_{i}}(\mu_{i,i+2}^{\pm})\subset V(\Y^{12})$ and $\dt{\gamma_{i+1}}(\mu_{i,i+2}^{\pm}) \subset V(\Y^{12})$.

Analogously, there exist subsets $A$, $E_2$, $A_2$, $M_{2}^{+}$, $M_{2}^{-}\subset V(\Y^{6})$ such that  
$$\mu_{i,i-2}^{+} = \langle A\cup E_2\cup A_2\cup M_{2}^{-}\rangle,$$  
$$\mu_{i,i-2}^{-} = \langle A\cup E_2\cup A_2\cup M_{2}^{+}\rangle.$$
We have that $\dt{\gamma_{i-1}}(E_2) = \dt{\gamma_{i-2}}(E_2)= E_2 \subset V(\Y^1)$ and that $\dt{\gamma_{i-2}}(A_2)=A_2 \subset V(\Y^{3})$. By Corollary \ref{coro:dtgammaialphajk} we have that $\dt{\gamma_{i-1}}(A_2)\subset V(\Y^{11})$. By Corollary \ref{cor:dtgammaialphaivepsij} we have that $\dt{\gamma_{i-1}}(A)$, $\dt{\gamma_{i-2}}(A)\subset V(\Y^{9})$. Finally by Lemma \ref{lemma:dtgammainujkp1} and Corollary \ref{cor:dtgammainujkp2} we have that $\dt{\gamma_{i-1}}(M_{2}^{\pm})\subset V(\Y^{10})$ and $\dt{\gamma_{i-2}}(M_{2}^{\pm})\subset V(\Y^{11})$. Therefore $\dt{\gamma_{i-1}}(\mu_{i,i-2}^{\pm})\subset V(\Y^{12})$ and $\dt{\gamma_{i-2}}(\mu_{i,i-2}^{\pm}) \subset V(\Y^{12})$.

\end{proof}

For $1 \leq i \leq g$ we define $\alpha_{i,i+2}^{\pm}$ and $\alpha_{i,i-2}^{\pm 1}$ as follows (the subindices are modulo $g$). Define the sets:
$$ A = \{\alpha_k : k\neq i \} \subset V(\Y^1),$$
$$M^{+} = \{\nu_{i-1,i+1}^{+}\} \subset V(\Y^{6}),$$
$$M^{-} = \{\nu_{i-1,i+1}^{-}\}\subset V(\Y^{6}),$$
$$E_1 = (\E \setminus \{\veps_{i-2,i}, \veps_{i-1,i+1}, \veps_{i,i+2}, \veps_{i+1,i+3}\}) \cup \{\veps_{i-1,i+2}\}\subset V(\Y^{1}),$$
$$E_2 = (\E \setminus \{\veps_{i-4,i-2}, \veps_{i-3,i-1}, \veps_{i-2,i}, \veps_{i-1,i+1}\}) \cup \{\veps_{i-3,i}\}\subset V(\Y^{1}),$$
$$C_{1}^{+} = \{\mu_{i,i+2}^{+}\} \subset V(\Y^{7}), \qquad C_{2}^{+} = \{\mu_{i,i-2}^{+}\} \subset V(\Y^{7}),$$
$$C_{1}^{-} = \{\mu_{i,i+2}^{-}\} \subset V(\Y^{7}), \qquad C_{2}^{-} = \{\mu_{i,i-2}^{-}\} \subset V(\Y^{7}).$$
Then
$$\alpha_{i,i+2}^{+} := \langle A \cup E_{1} \cup M^{-} \cup C_{1}^{+}\rangle \subset V(\Y^{8}),$$
$$\alpha_{i,i+2}^{-} := \langle A \cup E_{1} \cup M^{+} \cup C_{1}^{-}\rangle \subset V(\Y^{8}),$$
$$\alpha_{i,i-2}^{+} := \langle A \cup E_{2} \cup M^{-} \cup C_{2}^{+}\rangle \subset V(\Y^{8}),$$
$$\alpha_{i,i-2}^{-} := \langle A \cup E_{2} \cup M^{+} \cup C_{2}^{-}\rangle \subset V(\Y^{8}).$$
See Figure \ref{fig:Def-alphaii+2pm}.

\begin{figure}[ht]
  \centering
  \includegraphics[height=3.5cm]{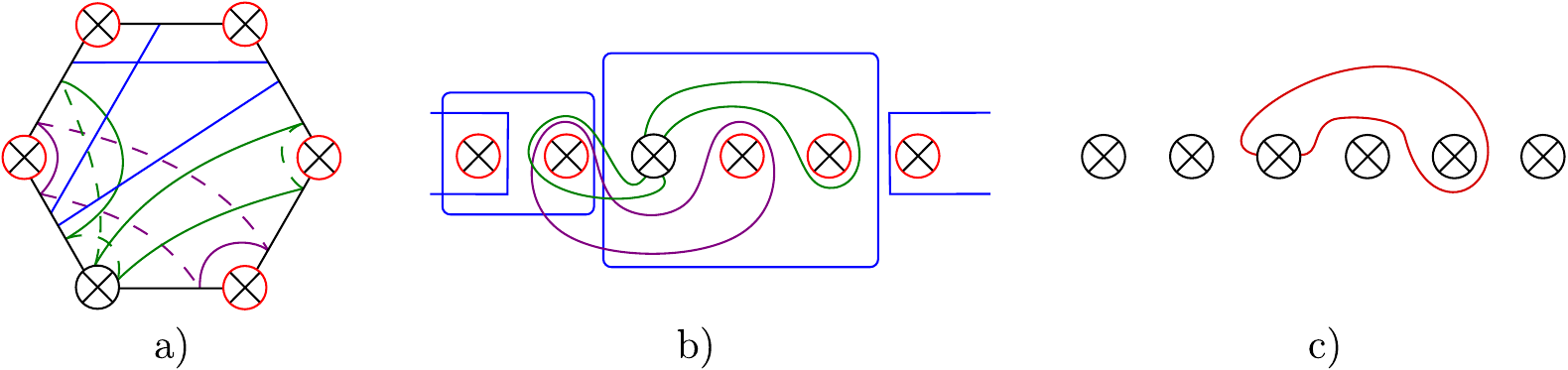}\caption{In (a) and (b) the curves of the set $A$ are in red, those of the set $E_1$ in blue, the curve of the set $M^{-}$ in purple, and the curve of the set $C_{1}^{+}$ in green. These curves uniquely determine the curve $\alpha_{3,5}^{+}$ shown in (c).}\label{fig:Def-alphaii+2pm}
 \end{figure}

\begin{lemma}\label{lemma:dtgammaialphapmiip2m2}
For $1\leq i \leq g$ fixed, we have that $\dt{\gamma_i}(\alpha_{i,i+2}^{\pm})$, $\dt{\gamma_{i+1}}(\alpha_{i,i+2}^{\pm})$, $\dt{\gamma_{i-1}}(\alpha_{i,i-2}^{\pm})$, $\dt{\gamma_{i-2}}(\alpha_{i,i-2}^{\pm}) \in V(\Y^{13}).$
\end{lemma}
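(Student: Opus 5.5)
The plan mirrors the proof of Lemma \ref{lemma:dtgammaimupmiip2m2}. Since a homeomorphism $h$ acts on $\cg{N}$ by automorphisms, $v = \langle B \rangle$ implies $h(v) = \langle h(B) \rangle$. So it suffices to show that, for each of the four twists in the statement, the image of every set used in the definition of $\alpha^{\pm}_{i,i+2}$ (resp.\ $\alpha^{\pm}_{i,i-2}$) lies in $V(\Y^{12})$. Then the image of $\alpha^{\pm}_{i,i\pm 2}$ is uniquely determined by a subset of $V(\Y^{12})$, and therefore lies in $V(\Y^{13})$. I treat the case $\alpha^{\pm}_{i,i+2}$ with $\dt{\gamma_i}$ and $\dt{\gamma_{i+1}}$ in detail. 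The case $\alpha^{\pm}_{i,i-2}$ with $\dt{\gamma_{i-1}}$ and $\dt{\gamma_{i-2}}$ is the mirror-symmetric argument, with $E_2$ and $C_2^{\pm}$ in place of $E_1$ and $C_1^{\pm}$.

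I bound the sets $A$, $E_1$, $M^{\pm}$, $C_1^{\pm}$ one at a time.

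\begin{enumerate}
\item By Corollary \ref{cor:dtgammaialphaivepsij}, $\dt{\gamma_i}(A)$ and $\dt{\gamma_{i+1}}(A)$ lie in $V(\Y^{9})$.
\item For $E_1$, I first check from the model of Subsection \ref{subsec:model} which of its curves are disjoint from $\gamma_i$ and from $\gamma_{i+1}$. Those are fixed by the twist and stay in $V(\Y^{1})$. For any that are not, Corollary \ref{cor:dtgammaialphaivepsij} again places the image in $V(\Y^{9})$.
\item The set $M^{\pm} = \{\nu^{\pm}_{i-1,i+1}\}$ is handled by Lemma \ref{lemma:dtgammainujkp1} and Corollary \ref{cor:dtgammainujkp2}, applied with index $i-2$. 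This gives $\dt{\gamma_{i-2}}$ and $\dt{\gamma_{i+1}}$ images in $V(\Y^{10})$, and $\dt{\gamma_{i-1}}$ and $\dt{\gamma_{i}}$ images in $V(\Y^{11})$. Hence both relevant twists send $M^{\pm}$ into $V(\Y^{11})$.
\item The key input is $C_1^{\pm} = \{\mu^{\pm}_{i,i+2}\}$. Lemma \ref{lemma:dtgammaimupmiip2m2} gives exactly $\dt{\gamma_i}(\mu^{\pm}_{i,i+2}), \dt{\gamma_{i+1}}(\mu^{\pm}_{i,i+2}) \in V(\Y^{12})$.
\end{enumerate}

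Collecting these bounds, all images lie in $V(\Y^{12})$, so $\dt{\gamma_i}(\alpha^{\pm}_{i,i+2})$ and $\dt{\gamma_{i+1}}(\alpha^{\pm}_{i,i+2})$ lie in $V(\Y^{13})$.

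The main obstacle is bookkeeping rather than new ideas. I must confirm that each twist appearing in the statement is one for which the earlier results apply: Lemma \ref{lemma:dtgammaimupmiip2m2} covers exactly the pairs (twist, $\mu$) needed here, which is presumably why the statement lists precisely these four twists. Lemma \ref{lemma:dtgammainujkp1} and Corollary \ref{cor:dtgammainujkp2} must also cover $\nu^{\pm}_{i-1,i+1}$ under $\dt{\gamma_{i}}, \dt{\gamma_{i+1}}$ and $\nu^{\pm}_{i-3,i-1}$-type indices in the mirror case. For the mirror case, the set $M^{\pm}$ is still $\{\nu^{\pm}_{i-1,i+1}\}$. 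Under $\dt{\gamma_{i-1}}$ and $\dt{\gamma_{i-2}}$ I again use Lemma \ref{lemma:dtgammainujkp1} and Corollary \ref{cor:dtgammainujkp2} with index $i-2$, which give levels $11$ and $10$ respectively. The $\mu$ bounds come from the second half of Lemma \ref{lemma:dtgammaimupmiip2m2}.

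One caveat is the range of $i$: $\gamma_g$ is not among Szepietowski's generators, but the earlier lemmata are stated modulo $g$, so I take indices modulo $g$ throughout, as those results do.
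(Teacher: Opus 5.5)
Your proposal is correct and follows the paper's proof essentially line for line. It uses the same determining sets $A$, $E_{1}$ (resp.\ $E_{2}$), $M^{\pm}$ and $C^{\pm}_{1}$ (resp.\ $C^{\pm}_{2}$), and the same citations: Corollary~\ref{cor:dtgammaialphaivepsij} for $A$, Lemma~\ref{lemma:dtgammainujkp1} and Corollary~\ref{cor:dtgammainujkp2} at index $i-2$ for $M^{\pm}=\{\nu^{\pm}_{i-1,i+1}\}$ (levels $10$ and $11$), and Lemma~\ref{lemma:dtgammaimupmiip2m2} for $C^{\pm}$ (level $12$). The only difference is that the paper asserts outright that $E_{1}$ and $E_{2}$ are fixed by the relevant twists, whereas you allow a fallback to Corollary~\ref{cor:dtgammaialphaivepsij}, which is harmless either way.
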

\begin{proof}
For $1 \leq i \leq g$ fixed, there exist subsets $A$, $M^+$, $M^-$, $E_1$, $E_2$, $C_1^+$, $C_1^-$, $C_2^+$, $C_2^-\subset V(\Y^7)$ such that: 
$$\alpha_{i,i+2}^{+} := \langle A \cup E_{1} \cup M^{-} \cup C_{1}^{+}\rangle \subset V(\Y^{8}),$$
$$\alpha_{i,i+2}^{-} := \langle A \cup E_{1} \cup M^{+} \cup C_{1}^{-}\rangle \subset V(\Y^{8}),$$
$$\alpha_{i,i-2}^{+} := \langle A \cup E_{2} \cup M^{-} \cup C_{2}^{+}\rangle \subset V(\Y^{8}),$$
$$\alpha_{i,i-2}^{-} := \langle A \cup E_{2} \cup M^{+} \cup C_{2}^{-}\rangle \subset V(\Y^{8}).$$
We have that $\dt{\gamma_i}(E_1) = \dt{\gamma_{i+1}}(E_1) = E_1\subset(\Y^1)$ and $\dt{\gamma_{i-1}}(E_2) = \dt{\gamma_{i-2}}(E_2) = E_2\subset(\Y^1)$. By Corollary \ref{cor:dtgammaialphaivepsij} we have that $\dt{\gamma_{i}}(A)$, $\dt{\gamma_{i+1}}(A)$, $\dt{\gamma_{i-1}}(A)$, $\dt{\gamma_{i-2}}(A)\subset V(\Y^9)$. By Lemma \ref{lemma:dtgammainujkp1} we have that $\dt{\gamma_{i+1}}(M^{\pm})$, $\dt{\gamma_{i-2}}(M^{\pm}) \subset V(\Y^{10})$. By Corollary \ref{cor:dtgammainujkp2} we have that $\dt{\gamma_{i}}(M^{\pm})$, $\dt{\gamma_{i-1}}(M^{\pm})\subset V(\Y^{11})$. Finally, by Lemma \ref{lemma:dtgammaimupmiip2m2} we have that $\dt{\gamma_{i}}(C_{1}^{\pm})$, $\dt{\gamma_{i+1}}(C_{1}^{\pm})$, $\dt{\gamma_{i-1}}(C_{2}^{\pm})$, $\dt{\gamma_{i-2}}(C_{2}^{\pm})\subset V(\Y^{12})$. Therefore $\dt{\gamma_i}(\alpha_{i,i+2}^{\pm})$, $\dt{\gamma_{i+1}}(\alpha_{i,i+2}^{\pm})$, $\dt{\gamma_{i-1}}(\alpha_{i,i-2}^{\pm})$, $\dt{\gamma_{i-2}}(\alpha_{i,i-2}^{\pm}) \in V(\Y^{13}).$
\end{proof}

\begin{lemma}\label{lemma:ygammaiip1m2}
For all $1\leq  i \leq g$ we have that $y^{\pm 1}(\alpha_{i, i+1})$, $y^{\pm 1}(\alpha_{i, i-2})\in  V(\Y^{13})$.
\end{lemma}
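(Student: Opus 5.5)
We follow the pattern used throughout Section \ref{sec:GenSetY}. The homeomorphism $y$ is supported on the Klein bottle $K$ bounded by $\veps_{g-2,g}$, and $K$ contains only the crosscaps with indices $g-1$ and $g$. So $y^{\pm 1}$ fixes every curve disjoint from $\veps_{g-2,g}$ (more precisely, every curve that can be isotoped off $K$). In particular $y^{\pm1}(\alpha_{i,i+1}) = \alpha_{i,i+1}$ and $y^{\pm 1}(\alpha_{i,i-2})=\alpha_{i,i-2}$ whenever these curves have zero intersection with $\veps_{g-2,g}$, and those lie in $V(\Y)$. This leaves only the finitely many elements of $\A$ meeting $K$, essentially those with an index in $\{g-2,g-1,g,1\}$: $\alpha_{g-2,g-1}, \alpha_{g-1,g}, \alpha_{g,1}, \alpha_{g-1,g-3}, \alpha_{g,g-2}, \alpha_{1,g-1}, \alpha_{2,g}$. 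For each of these we compute the image under $y$ and $y^{-1}$ explicitly in the model.

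The key step is to identify each image with a curve we have already located in some $\Y^{m}$ with $m\le 13$, or with a Dehn twist image covered by Lemmata \ref{lemma:dtgammaimupmiip2m2} and \ref{lemma:dtgammaialphapmiip2m2}. A crosscap slide of crosscap $g$ along crosscap $g-1$ sends a one-sided curve through crosscap $g$ to a one-sided curve running through crosscaps $g-1$ and $g$ in a "twisted" fashion. I expect these images to be exactly the curves $\alpha^{\pm}_{g-1,g+1}$, $\alpha^{\pm}_{g,g-2}$ (indices mod $g$), or their $\dt{\gamma_{g-1}}$- or $\dt{\gamma_{g}}$-translates. This explains why those families, together with the signs $\pm$ indexing on which side of $H$/$H'$ the curve crosses, were introduced just before the statement. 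Concretely, I would verify that
\begin{itemize}
\item $y^{\pm1}(\alpha_{g-1,g})$ and $y^{\pm1}(\alpha_{g,g-2})$ coincide with curves among $\alpha^{\pm}_{g-1,g+1}$, $\alpha^{\pm}_{g,g-2}$ (in $V(\Y^{8})$), possibly after a single twist along $\gamma_{g-1}$ or $\gamma_{g-2}$, which lands in $V(\Y^{13})$ by Lemma \ref{lemma:dtgammaialphapmiip2m2};
\item the remaining neighbours ($\alpha_{g-2,g-1}$, $\alpha_{g,1}$, $\alpha_{1,g-1}$, $\alpha_{2,g}$, $\alpha_{g-1,g-3}$) are sent to curves of the form $\alpha_{j,k}$, or to $\dt{\gamma_i}^{\pm1}(\alpha_{j,k})$, which lie in $V(\Y^{3})$ by Lemma \ref{lemma:alphaij} or in $V(\Y^{11})$ by Corollary \ref{coro:dtgammaialphajk}.
\end{itemize}
The relation $y^{2}=\dt{\veps_{g-2,g}}$ and the conjugation $y^{-1} = \dt{\veps_{g-2,g}}^{-1}y$ let us handle $y^{-1}$ from the $y$ computations, or we can argue by the symmetry of the model (reflection exchanging $H$ and $H'$, which swaps the $\pm$ superscripts).

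Where a direct identification fails, the fallback is the device of Corollary \ref{cor:dtgammaialphaivepsij}. We write the curve as $\langle B\rangle$ with $B\subset V(\Y^{m})$ mostly disjoint from $K$, so that $y^{\pm1}$ fixes most of $B$. We then place the few moved elements of $B$ using the previous step, which gives an extra level of expansion.

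I expect the main obstacle to be the explicit topological identification of $y^{\pm 1}(\alpha_{g-1,g})$ and $y^{\pm1}(\alpha_{g,g-2})$ with the auxiliary curves $\alpha^{\pm}_{\cdot,\cdot}$ or their twist images. This requires careful tracking of the crosscap slide through the two-polygon model and of the orientation convention that distinguishes $+$ from $-$. I would first draw the Klein bottle $K$ as the region between $e_{g-2}$ and $e_g$ containing $a_{g-1},a_g$, then realise $y$ as pushing crosscap $a_g$ once around the core of $a_{g-1}$, and read off the resulting arcs in $H$ and $H'$.
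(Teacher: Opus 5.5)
\textbf{There is a genuine gap: the identification of the six images is the whole content of this lemma, and you only predict it, with the roles essentially reversed.} Your reduction to the curves of $\A$ meeting $K$ is fine. Listing $\alpha_{2,g}$ is harmless, since it is disjoint from $K$ and therefore fixed. Note, though, that zero intersection with $\veps_{g-2,g}$ is not the right criterion: $\alpha_{g-1,g}$ and $\alpha_{g,g-2}$ lie inside $K$ and are moved.

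\textbf{The curves inside $K$ are the easy ones.} Since $y^{2}=\dt{\veps_{g-2,g}}$ fixes every curve in $K$, $y$ and $y^{-1}$ agree on them, and both send them to one-sided curves of the one-holed Klein bottle $K$. The paper gets $y^{\pm1}(\alpha_{g,g-2})=\alpha_{g}\in V(\Y^{1})$. It writes $y^{\pm1}(\alpha_{g-1,g})$ as a single $\dt{\gamma_{g-1}}$-twist of a one-sided curve of $K$, already handled by the earlier twist lemmas.

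\textbf{The sign-dependent curves $\alpha^{\pm}_{i,i\pm 2}$ are needed for the four curves crossing $\partial K$,} namely $\alpha_{g-2,g-1},\alpha_{g,1},\alpha_{g-1,g-3},\alpha_{1,g-1}$. The missing idea is the paper's identification:
$y^{\pm1}(\alpha_{g-2,g-1})=\dt{\gamma_{g-1}}(\alpha^{\pm}_{g-2,g})$,
$y^{\pm1}(\alpha_{g,1})=\dt{\gamma_{g-1}}(\alpha^{\pm}_{g-1,1})$,
$y^{\pm1}(\alpha_{g-1,g-3})=\dt{\gamma_{g-1}}(\alpha^{\pm}_{g,g-2})$,
$y^{\pm1}(\alpha_{1,g-1})=\dt{\gamma_{g-1}}(\alpha^{\pm}_{1,g-1})$.
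These are exactly the four cases of Lemma \ref{lemma:dtgammaialphapmiip2m2}.

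\textbf{Your expectation that these images have the form $\alpha_{j,k}$ or $\dt{\gamma_i}^{\pm1}(\alpha_{j,k})$ is unsupported, and its most natural instance is false.} The standard relation $y\dt{\gamma_{g-1}}y^{-1}=\dt{\gamma_{g-1}}^{-1}$ in $\mcg{N_{2,1}}$ gives $(\dt{\gamma_{g-1}}^{m}y)^{2}=y^{2}=\dt{\veps_{g-2,g}}$ for every $m$. So $y(\alpha)=\dt{\gamma_{g-1}}^{-m}(\alpha)$ would force $\dt{\veps_{g-2,g}}(\alpha)=\alpha$, which is impossible because $i(\alpha,\veps_{g-2,g})\neq 0$. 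The images carry a half-twist about $\partial K$ whose direction depends on the exponent of $y$; indeed $y(\alpha)=\dt{\veps_{g-2,g}}(y^{-1}(\alpha))$. This half-twist is what the sign of $\alpha^{\pm}$ records.

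\textbf{Your fallbacks do not close the gap.}
\begin{itemize}
\item \emph{Determining sets.} Suppose $\alpha$ crosses $\partial K$ and $\alpha=\langle B\rangle$. Then $B$ must contain a curve crossing $\partial K$; otherwise $\dt{\veps_{g-2,g}}(\alpha)\neq\alpha$ would also be adjacent to every element of $B$, contradicting uniqueness. So writing $\alpha=\langle B\rangle$ with \emph{most} of $B$ fixed by $y$ always leaves curves crossing $\partial K$ whose $y$-images are as unknown as the original. Without an explicit identification somewhere, the argument is circular.
\item \emph{Getting $y^{-1}$ from $y$.} Using $y^{-1}=\dt{\veps_{g-2,g}}^{-1}y$ would require control of $\dt{\veps_{g-2,g}}^{-1}$ on rigid expansions. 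Nothing in the paper provides this, and $\dt{\veps_{g-2,g}}\notin\G$.
\item \emph{The $H\leftrightarrow H'$ reflection.} This reflection is not a symmetry of $\Y$: it sends $\nu^{-}_{5,7}$ to $\nu^{+}_{5,7}$. So it does not preserve the levels $\Y^{k}$ and cannot transfer the bound $13$ between the $+$ and $-$ cases.
\end{itemize}
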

\begin{proof}
Given that $y^{\pm 1}(\alpha_{i,i+1})= \alpha_{i,i+1}$ if $1\leq i \leq g-3$, and that $y^{\pm 1}(\alpha_{i,i-2})= \alpha_{i,i-2}$ if $2\leq i \leq g-2$, we only need to verify that $$y^{\pm 1}(\alpha_{g-2, g-1}), y^{\pm 1}(\alpha_{g-1, g)}, y^{\pm 1}(\alpha_{g,1}), y^{\pm 1}(\alpha_{g-1, g-3}), y^{\pm 1}(\alpha_{g, g-2}), y^{\pm}(\alpha_{1, g-1})\in V(\Y^{13}).$$  

We that $y^{\pm 1}(\alpha_{g, g-2})= \alpha_g$ and that $y^{\pm 1}(\alpha_{g-1, g}) = \dt{\gamma_{g-1}}(\alpha_{g,g-1})$. Then, by Lemma \ref{lemma:alphai} we have that $y^{\pm 1}(\alpha_{g, g-2})\in V(\Y^1)$ and by Lemma \ref{lemma:dtgammaiA}  we have that $y^{\pm 1}(\alpha_{g-1, g})\in V(\Y^8)$.

Notice that
$$y^{\pm 1}(\alpha_{g-2, g-1}) = \dt{\gamma_{g-1}}(\alpha_{g-2,g}^{\pm}),$$
$$y^{\pm}(\alpha_{g,1}) = \dt{\gamma_{g-1}}(\alpha_{g-1,1}^{\pm}),$$
$$y^{\pm 1}(\alpha_{g-1, g-3}) = \dt{\gamma_{g-1}}(\alpha_{g,g-2}^{\pm}),$$
$$y^{\pm 1}(\alpha_{1, g-1}) = \dt{\gamma_{g-1}}(\alpha_{1,g-1}^{\pm}).$$
Then by Lemma \ref{lemma:dtgammaialphapmiip2m2} $y^{\pm 1}(\alpha_{g-2, g-1})$, $y^{\pm 1}(\alpha_{g,1})$, $y^{\pm 1}(\alpha_{g-1, g-3})$, $y^{\pm 1}(\alpha_{1, g-1})\in V(\Y^{13})$.
\end{proof}

We need more auxiliary curves. For $1 \leq i \leq g$ we define $\kappa_{i,i+3}^{\pm}$ as follows (the subindices are modulo $g$). Define the sets:
$$ A = \{\alpha_k : 1 \leq k \leq g\} \subset V(\Y^1),$$
$$E = (\E \setminus \{\veps_{i-2,i},\veps_{i-1,i+1}, \veps_{i,i+2}, \veps_{i+1,i+3}, \veps_{i+2,i+4} \}) \cup \{\veps_{i-1,i+3}\}\subset V(\Y^{1}),$$
$$M^{+} = \{\nu_{i-1,i+1}^{+}, \nu_{i+2,i+4}^{-}\} \subset V(\Y^{6}),$$
$$M^{-} = \{\nu_{i-1,i+1}^{-}, \nu_{i+2,i+4}^{+}\} \subset V(\Y^{6}).$$
We define 
$$\kappa_{i,i+3}^{+}:=\langle A\cup E\cup M^{-}\rangle,$$
$$\kappa_{i,i+3}^{-}:=\langle A\cup E\cup M^{+}\rangle.$$
See \Cref{fig:Def-kappaii+3pm}.

\begin{figure}[ht]
  \centering
  \includegraphics[height=3.5cm]{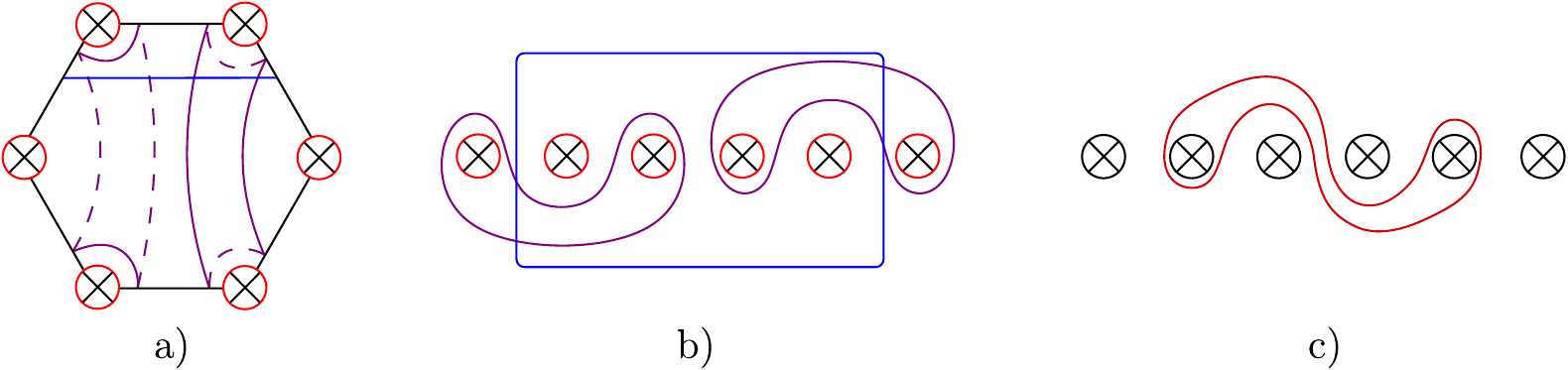}\caption{In (a) and (b) the curves of the set $A$ are in red, those of the set $E$ in blue, those of of the set $M^{-}$ in purple. These curves uniquely determine the curve $\kappa_{2,5}^{+}$ shown in (c).}\label{fig:Def-kappaii+3pm}
 \end{figure}

\begin{lemma}\label{lemma:dtgammaikappaiip3}
For $1\leq i \leq g $ fixed, we have that $\dt{\gamma_{i}}^{\pm 1}(\kappa_{i,i+3}^{\pm})$, $\dt{\gamma_{i+2}}^{\pm 1}(\kappa_{i,i+3}^{\pm}) \subset V(\Y^{12})$.
\end{lemma}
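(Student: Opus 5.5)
We follow the template of Lemma \ref{lemma:dtgammainujkp1} and Lemma \ref{lemma:dtgammaimupmiip2m2}. By definition,
$$\kappa^{+}_{i,i+3} = \langle A \cup E \cup M^{-}\rangle, \qquad \kappa^{-}_{i,i+3} = \langle A \cup E \cup M^{+}\rangle .$$
Since $\mcg{N}$ acts on $\cg{N}$ by automorphisms, for $h \in \{\dt{\gamma_{i}}^{\pm 1}, \dt{\gamma_{i+2}}^{\pm 1}\}$ we get
$$h(\kappa^{\pm}_{i,i+3}) = \langle h(A) \cup h(E) \cup h(M^{\mp})\rangle .$$
It therefore suffices to show that $h(A)$, $h(E)$ and $h(M^{\mp})$ all lie in $V(\Y^{11})$. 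Then $h(\kappa^{\pm}_{i,i+3}) \in V(\Y^{12})$, which is the claim.

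The set $A$ consists of the curves $\alpha_{k}$, and $E$ consists of curves $\veps_{j,k}$. By Corollary \ref{cor:dtgammaialphaivepsij}, $h(A)$ and $h(E)$ lie in $V(\Y^{9})$. Many elements of $E$ are actually fixed by $h$, namely those disjoint from $\gamma_{i}$ or $\gamma_{i+2}$, but the corollary makes this unnecessary. Note that for $h=\dt{\gamma_{i}}^{\pm1}$ with $i = g$ we need $\gamma_{g}$, which is not among the $\gamma_{1},\dots,\gamma_{g-1}$ of Lemma \ref{lemma:dtgammaiA}. In that case we either appeal to the cyclic symmetry of the model, which is how the earlier corollaries are stated for all $1 \leq i \leq g$, or restrict to indices where $\gamma_{i}, \gamma_{i+2}$ are generators, as is done implicitly elsewhere.

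The remaining terms are $h(\nu^{\pm}_{i-1,i+1})$ and $h(\nu^{\mp}_{i+2,i+4})$. We check intersection patterns in the model. The curve $\gamma_{i}$ runs between the sides $a_{i}$ and $a_{i+1}$, so it should be disjoint from $\nu^{\pm}_{i+2,i+4}$, and $\dt{\gamma_{i}}^{\pm 1}$ fixes it. Relative to $\nu^{\pm}_{i-1,i+1}$, the curve $\gamma_{i}$ plays the role of $\gamma_{j+1}$ with $j=i-1$. Corollary \ref{cor:dtgammainujkp2} with index $i-2$ then gives $\dt{\gamma_{i}}^{\pm 1}(\nu^{\pm}_{i-1,i+1}) \in V(\Y^{11})$. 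Similarly, $\gamma_{i+2}$ should be disjoint from $\nu^{\pm}_{i-1,i+1}$, so it fixes it. Relative to $\nu^{\pm}_{i+2,i+4}$, the curve $\gamma_{i+2}$ is the twist $\gamma_{(i+1)+1}$, so Corollary \ref{cor:dtgammainujkp2} with index $i+1$ gives $\dt{\gamma_{i+2}}^{\pm1}(\nu^{\pm}_{i+2,i+4}) \in V(\Y^{11})$. If instead one of the intersecting pairs falls under Lemma \ref{lemma:dtgammainujkp1}, the bound only improves to $\Y^{10}$.

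The main obstacle is the bookkeeping of which $\gamma$'s meet which $\nu$'s, that is, confirming from the figures that $\gamma_{i}$ misses $\nu^{\pm}_{i+2,i+4}$ and $\gamma_{i+2}$ misses $\nu^{\pm}_{i-1,i+1}$. This amounts to checking that the supports in the polygon model are separated. If either disjointness failed, we would still be covered, since both $i$ and $i+2$ lie within the index windows $\{j,j+1,j+2,j+3\}$ of Lemma \ref{lemma:dtgammainujkp1} and Corollary \ref{cor:dtgammainujkp2} for the relevant $\nu$. The uniform bound $\Y^{11}$ on the determining set then yields $\Y^{12}$.
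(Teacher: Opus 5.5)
Your proposal is correct and follows the paper's proof. Both transport the defining set $A\cup E\cup M^{\mp}$ of $\kappa^{\pm}_{i,i+3}$ by the twist, then use Corollary \ref{cor:dtgammaialphaivepsij} for $A$ and $E$ and Corollary \ref{cor:dtgammainujkp2} for $M^{\mp}$. Your index bookkeeping of which half of $M^{\mp}$ is fixed is more explicit than the paper's.

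The disjointness you flag as the main obstacle is already recorded in the proof of Lemma \ref{lemma:dtgammainujkp1}: $i(\gamma_{i},\nu^{\pm}_{i+2,i+4}) = i(\gamma_{i+3},\nu^{\pm}_{i,i+2}) = 0$. Shifting the index by one gives the case of $\gamma_{i+2}$ and $\nu^{\pm}_{i-1,i+1}$. You should cite that fact rather than your fallback, which is wrong as stated. The fallback fails because $\gamma_{i}$ lies outside the window $\{\gamma_{i+1},\dots,\gamma_{i+4}\}$ for $\nu^{\pm}_{i+2,i+4}$, and $\gamma_{i+2}$ lies outside $\{\gamma_{i-2},\dots,\gamma_{i+1}\}$ for $\nu^{\pm}_{i-1,i+1}$.
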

\begin{proof}
Let $A$, $E$, $M^{+}$, $M^{-}$ be as in the definition of $\kappa_{i,i+3}^{+}$ and $\kappa_{i,i+3}^{-}$, we have that:
$$\dt{\gamma_{i}}^{\pm 1}(\kappa_{i,i+3}^{+}) = \langle \dt{\gamma_{i}}^{\pm 1}(A)\cup \dt{\gamma_{i}}^{\pm 1}(E) \cup \dt{\gamma_{i}}^{\pm 1}(M^{-}) \rangle, $$
$$\dt{\gamma_{i}}^{\pm 1}(\kappa_{i,i+3}^{-}) = \langle \dt{\gamma_{i}}^{\pm 1}(A)\cup \dt{\gamma_{i}}^{\pm 1}(E) \cup \dt{\gamma_{i}}^{\pm 1}(M^{+}) \rangle, $$
$$\dt{\gamma_{i+2}}^{\pm 1}(\kappa_{i,i+3}^{+}) = \langle \dt{\gamma_{i+2}}^{\pm 1}(A)\cup \dt{\gamma_{i+2}}^{\pm 1}(E) \cup \dt{\gamma_{i+2}}^{\pm 1}(M^{-}) \rangle, $$
$$\dt{\gamma_{i+2}}^{\pm 1}(\kappa_{i,i+3}^{-}) = \langle \dt{\gamma_{i+2}}^{\pm 1}(A)\cup \dt{\gamma_{i+2}}^{\pm 1}(E) \cup \dt{\gamma_{i+2}}^{\pm 1}(M^{+}) \rangle.$$

By Corollary \ref{cor:dtgammaialphaivepsij}, we have that $\dt{\gamma_{i}}^{\pm 1}(A)$, $\dt{\gamma_{i+2}}^{\pm 1}(A)$, $\dt{\gamma_{i}}^{\pm 1}(E)$, $\dt{\gamma_{i+2}}^{\pm 1}(E) \in  V(\Y^{9})$. By  Corollary \ref{cor:dtgammainujkp2} $\dt{\gamma_{i}}^{\pm 1}(M^{\pm})$, $\dt{\gamma_{i+2}}^{\pm 1}(M^{\pm})\in V(\Y ^{11})$. Therefore $\dt{\gamma_{i}}^{\pm 1}(\kappa_{i,i+3}^{\pm})$, $\dt{\gamma_{i+2}}^{\pm 1}(\kappa_{i,i+3}^{\pm}) \subset V(\Y^{12})$.
\end{proof}

We need more auxiliary curves. For $1 \leq i \leq g$ we define  the curves $\eta_{i,i-1}^{\pm}$ and $\eta_{i,i+1}^{\pm}$ as follows (the subindices are modulo $g$). Define the sets:
$$ A = \{\alpha_k : 1 \leq k \leq g\} \subset V(\Y^1),$$
$$E = (\E \setminus \{\veps_{i-3,i-1}, \veps_{i-2,i},\veps_{i-1,i+1}, \veps_{i,i+2} \}) \cup \{\veps_{i-2,i+1}\}\subset V(\Y^{1}),$$
$$M_{1}^{+} = \{\kappa_{i-2,i+1}^{+} \} \subset V(\Y^{7}), \qquad M_{2}^{+} = \{\kappa_{i-1,i+2}^{+} \} \subset V(\Y^{7}),$$
$$M_{1}^{-} = \{\kappa_{i-2,i+1}^{-} \} \subset V(\Y^{7}), \qquad M_{2}^{-} = \{\kappa_{i-1,i+2}^{-} \} \subset V(\Y^{7}).$$
We define 
$$\eta_{i,i-1}^{+}:=\langle A\cup E\cup M_{1}^{-}\rangle,$$
$$\eta_{i,i-1}^{-}:=\langle A\cup E\cup M_{1}^{+}\rangle,$$
$$\eta_{i,i+1}^{+}:=\langle A\cup E\cup M_{2}^{+}\rangle,$$
$$\eta_{i,i+1}^{-}:=\langle A\cup E\cup M_{2}^{-}\rangle.$$
See Figure \ref{fig:Def-etaii-1pm}.

\begin{figure}[ht]
  \centering
  \includegraphics[height=3.5cm]{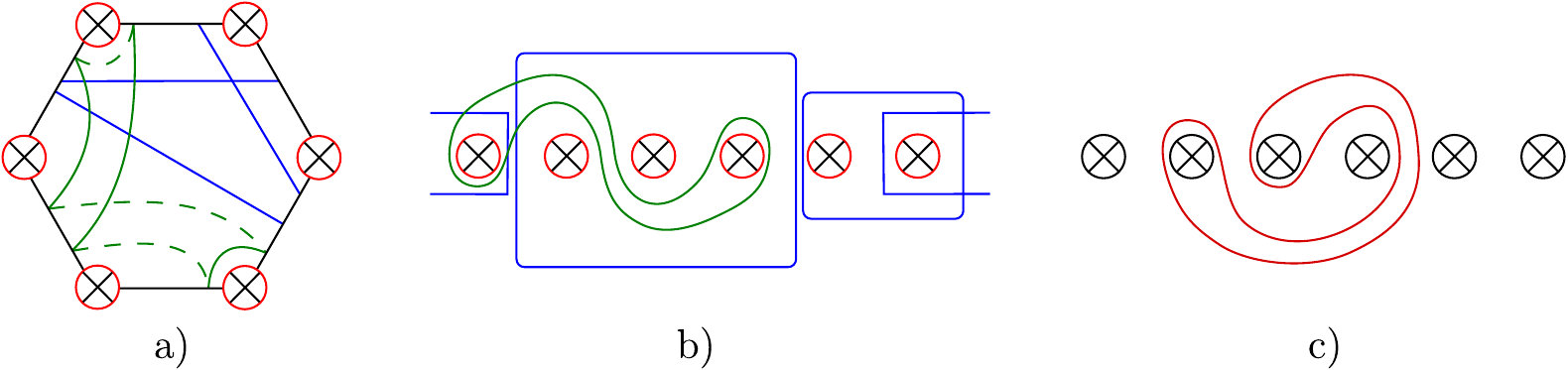}\caption{In (a) and (b) the curves of the set $A$ are in red, those of the set $E$ in blue, and the curve of the set $M_{1}^{+}$ in green. These curves uniquely determine the curve $\eta_{3,2}^{-}$ shown in (c).}\label{fig:Def-etaii-1pm}
 \end{figure}

\begin{lemma}\label{lemma:dtgammaietai}
For $1\leq i \leq g $ fixed, we have that $\dt{\gamma_{i-1}}^{\pm 1}(\eta_{i,i+1}^{\pm})$, $\dt{\gamma_{i}}^{\pm 1}(\eta_{i,i-1}^{\pm}) \subset V(\Y^{13})$.
\end{lemma}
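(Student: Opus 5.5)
The plan follows the pattern of Lemma \ref{lemma:dtgammaikappaiip3}. We push the defining sets of $\eta^{\pm}_{i,i\pm1}$ through the twist. Since each $\dt{\gamma}$ is an automorphism of $\cg{N}$, we have $\dt{\gamma}(\langle B\rangle)=\langle \dt{\gamma}(B)\rangle$. It therefore suffices to show that the images of $A$, $E$, $M^{\pm}_1$ and $M^{\pm}_2$ lie in $V(\Y^{12})$.

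Concretely, fix $i$ and let $A$, $E$, $M^{\pm}_1$, $M^{\pm}_2$ be as in the definition of $\eta^{\pm}_{i,i-1}$ and $\eta^{\pm}_{i,i+1}$. Then
$$\dt{\gamma_{i}}^{\pm 1}(\eta^{\pm}_{i,i-1})=\langle \dt{\gamma_{i}}^{\pm 1}(A)\cup \dt{\gamma_{i}}^{\pm 1}(E)\cup \dt{\gamma_{i}}^{\pm 1}(M^{\mp}_{1})\rangle,$$
and similarly
$$\dt{\gamma_{i-1}}^{\pm 1}(\eta^{\pm}_{i,i+1})=\langle \dt{\gamma_{i-1}}^{\pm 1}(A)\cup \dt{\gamma_{i-1}}^{\pm 1}(E)\cup \dt{\gamma_{i-1}}^{\pm 1}(M^{\pm}_{2})\rangle.$$

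The sets $A$ and $E$ consist of curves $\alpha_k$ and $\veps_{j,k}$. By Corollary \ref{cor:dtgammaialphaivepsij}, their images under any $\dt{\gamma_j}^{\pm1}$ lie in $V(\Y^{9})$.

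For the $\kappa$ curves, $M_1^{\pm}=\{\kappa^{\pm}_{i-2,i+1}\}$. Writing $i=(i-2)+2$, Lemma \ref{lemma:dtgammaikappaiip3} applied with index $i-2$ controls $\dt{\gamma_{(i-2)+2}}^{\pm1}=\dt{\gamma_i}^{\pm1}$, so $\dt{\gamma_i}^{\pm1}(M_1^{\pm})\subset V(\Y^{12})$. Likewise $M_2^{\pm}=\{\kappa^{\pm}_{i-1,i+2}\}$, and Lemma \ref{lemma:dtgammaikappaiip3} with index $i-1$ gives the twist $\dt{\gamma_{i-1}}^{\pm1}$ directly, so $\dt{\gamma_{i-1}}^{\pm1}(M_2^{\pm})\subset V(\Y^{12})$.

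Putting these together, all the determining curves lie in $V(\Y^{12})$, so the uniquely determined images lie in $V(\Y^{13})$.

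The main point to check is the index matching. The pairing of twists with $\eta$'s in the statement ($\gamma_{i}$ with $\eta_{i,i-1}$, $\gamma_{i-1}$ with $\eta_{i,i+1}$) has to be exactly the pairing that the two indices allowed in Lemma \ref{lemma:dtgammaikappaiip3} cover; the computation above shows that it is. A further subtlety is that the modular indexing in Lemma \ref{lemma:dtgammaikappaiip3} allows $\gamma_g$ as well as $\gamma_1,\dots,\gamma_{g-1}$. Lemma \ref{lemma:dtgammaikappaiip3} and Corollary \ref{cor:dtgammaialphaivepsij} are stated uniformly in the index, so they can be cited as given. No new uniqueness verification is needed, because the $\eta$'s are defined as uniquely determined curves and twists preserve that property.
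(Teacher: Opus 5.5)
Your proposal is correct and is essentially the paper's own proof. You push $A$, $E$, $M_1^{\pm}$, $M_2^{\pm}$ through the twist, put the images of $A\cup E$ in $V(\Y^{9})$ via Corollary \ref{cor:dtgammaialphaivepsij} and the images of the $\kappa$-curves in $V(\Y^{12})$ via Lemma \ref{lemma:dtgammaikappaiip3} (with indices $i-2$ and $i-1$, matched exactly as you did), and conclude membership in $V(\Y^{13})$.

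The only slip is your claim that Corollary \ref{cor:dtgammaialphaivepsij} is stated uniformly in the index: it is stated only for $1\le i\le g-1$. So for a twist along $\gamma_g$ you, like the paper, tacitly rely on its proof being index-uniform. This is harmless here, because the lemma is only ever applied to $\dt{\gamma_6}(\eta^{-}_{6,5})$ when $g=7$ and to $\dt{\gamma_5}(\eta^{-}_{6,1})$ when $g=6$, and both twist indices lie in that range.
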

\begin{proof}
Let $A$, $E$, $M_{1}^{+}$, $M_{1}^{-}$, $M_{2}^{+}$, $M_{2}^{-}$ be as in the definition of $\eta_{i,i+1}^{+}$ and $\eta_{i,i-1}^{-}$, we have that:
$$\dt{\gamma_{i-1}}^{\pm 1}(\eta_{i,i+1}^{+}) = \langle \dt{\gamma_{i-1}}^{\pm 1}(A)\cup \dt{\gamma_{i-1}}^{\pm 1}(E) \cup \dt{\gamma_{i-1}}^{\pm 1}(M_{2}^{+}) \rangle, $$
$$\dt{\gamma_{i-1}}^{\pm 1}(\eta_{i,i+1}^{+}) = \langle \dt{\gamma_{i-1}}^{\pm 1}(A)\cup \dt{\gamma_{i-1}}^{\pm 1}(E) \cup \dt{\gamma_{i-1}}^{\pm 1}(M_{2}^{-}) \rangle, $$
$$\dt{\gamma_{i}}^{\pm 1}(\eta_{i,i-1}^{+}) = \langle \dt{\gamma_{i}}^{\pm 1}(A)\cup \dt{\gamma_{i}}^{\pm 1}(E) \cup \dt{\gamma_{i}}^{\pm 1}(M_{1}^{-}) \rangle, $$
$$\dt{\gamma_{i}}^{\pm 1}(\eta_{i,i-1}^{-}) = \langle \dt{\gamma_{i}}^{\pm 1}(A)\cup \dt{\gamma_{i}}^{\pm 1}(E) \cup \dt{\gamma_{i}}^{\pm 1}(M_{1}^{+}) \rangle.$$

By Corollary \ref{cor:dtgammaialphaivepsij}, we have that $\dt{\gamma_{i-1}}^{\pm 1}(A)$, $\dt{\gamma_{i}}^{\pm 1}(A)$, $\dt{\gamma_{i-1}}^{\pm 1}(E)$, $\dt{\gamma_{i}}^{\pm 1}(E) \in  V(\Y^{9})$. By Lemma \ref{lemma:dtgammaikappaiip3} $\dt{\gamma_{i-1}}^{\pm 1}(M_{2}^{\pm})$, $\dt{\gamma_{i}}^{\pm 1}(M_{1}^{\pm})\in V(\Y ^{12})$. Therefore $\dt{\gamma_{i-1}}^{\pm 1}(\eta_{i,i+1}^{\pm})$, $\dt{\gamma_{i}}^{\pm 1}(\eta_{i,i-1}^{\pm}) \subset V(\Y^{13})$.
\end{proof}

\begin{lemma}\label{lemma:ypmmu57m}
For all $g\geq 6$ we have that $y^{\pm 1}(\nu_{5,7}^{-}) \in V(\Y^{13})$ (the subindices are modulo $g$).
\end{lemma}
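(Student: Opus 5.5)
We follow the pattern of Lemma \ref{lemma:ygammaiip1m2}: identify $y^{\pm 1}(\nu^{-}_{5,7})$ with a Dehn twist image of an auxiliary curve already controlled by Lemmata \ref{lemma:dtgammaimupmiip2m2}, \ref{lemma:dtgammaialphapmiip2m2}, \ref{lemma:dtgammaikappaiip3} and \ref{lemma:dtgammaietai}.

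\textbf{Case $g$ large.} The Klein bottle $K$ is bounded by $\veps_{g-2,g}$, so $y$ is supported on the crosscaps $g-1,g$ (edges $e_{g-2},e_{g-1},e_g$). The curve $\nu^{-}_{5,7}$ only involves the edges $e_4,e_5,e_6,e_7$. For $g$ large enough that these indices modulo $g$ are far from $\{g-2,g-1,g\}$ (say $g\geq 10$), the supports are disjoint. Then $y^{\pm 1}(\nu^{-}_{5,7})=\nu^{-}_{5,7}\in V(\Y)$, and there is nothing to prove.

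\textbf{Case $g$ small (the finitely many values $6\leq g\leq 9$).} Here $\nu^{-}_{5,7}$, with indices modulo $g$, meets $\veps_{g-2,g}$, and we treat each value of $g$ separately.

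First, for each $g$ we draw $\nu^{-}_{5,7}$ and its image under the crosscap slide $y^{\pm1}$ in the model $P$. We then match the image with the configurations defining the auxiliary curves. A crosscap slide of crosscap $g$ across crosscap $g-1$ acts on curves crossing that pair like a Dehn twist along $\gamma_{g-1}$ composed with a "sign change" exchanging $\pm$ superscripts. This is exactly the phenomenon used in Lemma \ref{lemma:ygammaiip1m2}, where $y^{\pm1}(\alpha_{j,k})=\dt{\gamma_{g-1}}(\alpha^{\pm}_{\cdot,\cdot})$.

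Accordingly, we expect identities of the following shapes:
\begin{itemize}
\item $y^{\pm1}(\nu^{-}_{5,7})=\dt{\gamma_{g-1}}(\eta^{\pm}_{g,g-1})$ or $\dt{\gamma_{g-2}}(\eta^{\pm}_{g-1,g})$, when $\nu^{-}_{5,7}$ passes between consecutive crosscaps $g-1,g$;
\item $y^{\pm1}(\nu^{-}_{5,7})=\dt{\gamma_{i}}^{\pm1}(\kappa^{\pm}_{i,i+3})$ with $i=g-2$ or $g-3$, when $\nu^{-}_{5,7}$ spans three crosscaps including $g-1,g$;
\item for the remaining configurations, $y^{\pm1}(\nu^{-}_{5,7})$ is a twist of $\mu^{\pm}_{i,i\pm2}$ or of $\alpha^{\pm}_{i,i\pm 2}$.
\end{itemize}
Lemmata \ref{lemma:dtgammaietai}, \ref{lemma:dtgammaikappaiip3}, \ref{lemma:dtgammaimupmiip2m2} and \ref{lemma:dtgammaialphapmiip2m2} then place each of these curves in $V(\Y^{13})$ (or $V(\Y^{12})$). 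The case $g=6$, where $\nu^{-}_{5,7}=\nu^{-}_{5,1}$ and the support wraps around, will need the most care.

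\textbf{Fallback.} If no such direct identity holds for some $g$, we instead uniquely determine $y^{\pm1}(\nu^{-}_{5,7})$ by transporting a determining set. By Lemma \ref{lemma:dtgammainujkp1}, $\nu^{-}_{5,7}=\langle A\cup E\cup M\rangle$ with $A=\{\alpha_k\}$, $E\subset\E$ and $M$ a single $\nu^{+}$ curve. Hence $y^{\pm1}(\nu^{-}_{5,7})=\langle y^{\pm1}(A)\cup y^{\pm1}(E)\cup y^{\pm1}(M)\rangle$. Each $\alpha_k$ and $\veps_{j,k}$ is determined by elements of $\A$ (Lemmata \ref{lemma:alphai} and \ref{lemma:vepsijv2}). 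By Lemma \ref{lemma:ygammaiip1m2}, $y^{\pm1}(\A)\subset V(\Y^{13})$, so their $y$-images lie in $V(\Y^{14})$. That bound exceeds $13$, so this route only works if the choice of $M$ can be made disjoint from $K$.

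\textbf{Main obstacle.} The hardest step is verifying the exact identities between $y^{\pm1}(\nu^{-}_{5,7})$ and the twisted auxiliary curves for each small $g$, including the correct signs $\pm$ and the correct twist index. This is a picture-based check in the model $P$ for each $6\leq g\leq 9$.
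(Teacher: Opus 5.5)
Your proposal has a genuine gap. It never handles the only nontrivial cases. After the reduction to small $g$, the entire content of the lemma is a short list of concrete identifications of $y^{\pm1}(\nu^{-}_{5,7})$. You replace these with ``expected shapes'' and a heuristic (crosscap slide $=$ twist along $\gamma_{g-1}$ composed with a sign change) that you neither state precisely nor check for any $g$. (The paper reduces to $g\le 8$; your claim that $\nu^{-}_{5,7}$ meets $\veps_{7,9}$ when $g=9$ is incorrect, but harmless.)

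What is needed, and what the paper supplies, is the following:
\begin{itemize}
\item For $g=8$: $y(\nu^{-}_{5,7})=\langle A\cup E\cup M\rangle$ with $A=\{\alpha_k:k\neq 8\}$, $E=(\E\setminus\{\veps_{3,5},\veps_{4,6},\veps_{5,7},\veps_{6,8},\veps_{7,1}\})\cup\{\veps_{4,8}\}$ and $M=\{\nu^{+}_{4,6}\}$, all in $V(\Y^{6})$. Also $y^{-1}(\nu^{-}_{5,7})=\dt{\gamma_7}(\kappa^{-}_{5,8})$.
\item For $g=7$: $y(\nu^{-}_{5,7})=\dt{\gamma_6}(\eta^{-}_{6,5})$ and $y^{-1}(\nu^{-}_{5,7})=\dt{\gamma_6}(\veps_{4,6})$.
\item For $g=6$: $y(\nu^{-}_{5,1})=\mu^{1}_{6}\in V(\Y^{2})$ and $y^{-1}(\nu^{-}_{5,1})=\dt{\gamma_5}(\eta^{-}_{6,1})$.
\end{itemize}
Lemmata \ref{lemma:dtgammaikappaiip3}, \ref{lemma:dtgammaietai} and Corollary \ref{cor:dtgammaialphaivepsij} then give the bound $13$.

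Your candidate shapes would not have led you to these reliably.
\begin{itemize}
\item Three of the six images need ingredients absent from your list: a direct unique determination inside $\Y^{6}$, a twisted $\veps_{4,6}$, and the curve $\mu^{1}_{6}$ itself.
\item The other three are of types you list, but with different indices. Lemma \ref{lemma:dtgammaietai} only controls $\dt{\gamma_i}(\eta^{\pm}_{i,i-1})$ and $\dt{\gamma_{i-1}}(\eta^{\pm}_{i,i+1})$, so your candidate $\dt{\gamma_{g-1}}(\eta^{\pm}_{g,g-1})$ is not even covered by it. Neither of your $\eta$-guesses is the one that occurs.
\item The $\kappa$-identity that occurs twists by $\gamma_{i+2}$ (with $i=5$), not by $\gamma_i$.
\item The $\mu^{\pm}_{i,i\pm2}$ and $\alpha^{\pm}_{i,i\pm2}$ candidates play no role.
\end{itemize}

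Your fallback, as you note, overshoots. Transporting the determining set from Lemma \ref{lemma:dtgammainujkp1} through $y^{\pm1}$ puts the images of the $\alpha_k$ and $\veps_{j,k}$ only in $\Y^{14}$, so at best you get $\Y^{15}$. Even that requires the $\nu^{+}$ curve in the set to be disjoint from $K$. Otherwise you must control $y^{\pm1}$ of a $\nu^{+}$ curve, which is the original problem again.

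The paper's $g=8$ forward case is the working version of this idea. Rather than pushing a determining set of $\nu^{-}_{5,7}$ through $y$, it writes down a determining set for the image itself, made of curves already in $\Y^{6}$.
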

\begin{proof}
If $g\geq 9 $, then $y^{\pm 1}(\nu_{5,7}^{-}) = \nu_{5,7}^{-} \in \Y$. Therefore we only need to verify  $y^{\pm 1}(\nu_{5,7}^{-}) \in V(\Y^{13})$ when $6\leq g \leq 8$. 
\begin{description}
\item[Case $g=8$:] We define the following sets:
$$A = \{ \alpha_{k} : k\neq 8\}\subset V(\Y^{1}),$$
$$E = (\E \setminus \{\veps_{3,5}, \veps_{4,6},\veps_{5,7}, \veps_{6,8}, \veps_{7,1} \}) \cup \{\veps_{4,8}\}\subset V(\Y^{1}),$$
$$M = \{\nu_{4,6}^{+} \}\subset V(\Y^{6}).$$
We have that $y(\nu_{5,7}^{-}) = \langle A \cup E \cup M \rangle\in V(\Y^{7})$. On the other hand, notice that $y^{-1}(\nu_{5,7}^{-}) = \dt{\gamma_7}(\kappa_{5,8}^{-})$, then by Lemma \ref{lemma:dtgammaikappaiip3} we have that $y^{-1}(\nu_{5,7}^{-})\in V(\Y^{12})$.
\item[Case $g=7$:]Notice that $y(\nu_{5,7}^{-}) = \dt{\gamma_6}(\eta_{6,5}^{-})$ and $y^{-1}(\nu_{5,7}^{-}) = \dt{\gamma_6}(\veps_{4,6})$. Therefore, by Lemma \ref{lemma:dtgammaietai}  $y(\nu_{5,7}^{-})\in V(\Y^{13})$ and by Corollary \ref{cor:dtgammaialphaivepsij} we have that $y^{-1}(\nu_{5,7}^{-}) \in V(\Y^{9})$.
\item[Case $g=6$:]Notice that $y(\nu_{5,1}^{-}) = \mu_{g}^{1}$ and $y^{-1}(\nu_{5,1}^{-}) = \dt{\gamma_5}(\eta_{6,1}^{-})$. By the definition of $\mu_{g}^{1}$ (see Subsubsection \ref{subsubsec:mui}) we have that $y(\nu_{5,1}^{-})\in V(\Y^{2})$ and by Lemma \ref{lemma:dtgammaietai} we have that $y^{-1}(\nu_{5,1}^{-})\in V(\Y^{13})$.
\end{description} 
\end{proof}

Lemma \ref{lemma:y(Y)} follows from Lemma \ref{lemma:ygammaiip1m2} and Lemma \ref{lemma:ypmmu57m}.


\subsection{$\tau_{\beta}^{\pm 1}(\Y)\subset \Y^{13}$}\label{subsec:y(Y)}

In this subsection we prove the following lemma.

\begin{lemma}\label{lemma:taubeta(Y)}
We have that $\tau_{\beta}^{\pm 1}(\Y) \subset \Y^{13}$.
\end{lemma}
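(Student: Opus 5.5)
We follow the pattern used for $\dt{\gamma_i}^{\pm1}$ and $y^{\pm1}$. The first step is to locate $\beta$ relative to the curves of $\Y$. By construction, $\beta$ is the projection of $e_1\cup e_2\cup e_3\cup b$, with $b$ running in $H$ from the end of $e_g$ to the start of $e_4$. So $\beta$ bounds, on one side, the part of the surface containing the crosscaps $a_1,a_2,a_3,a_4$ (equivalently, it is isotopic to a curve of the type $\veps_{g,4}$, suitably pushed). Consequently $\dt{\beta}^{\pm1}$ fixes every $\alpha_{i,i+1}$ and $\alpha_{i,i-2}$ that is disjoint from $\beta$. It also fixes $\nu^-_{5,7}$ whenever $\nu^-_{5,7}$ misses $\beta$, which I expect holds for all $g\ge 7$, since the indices $5,6,7$ lie outside $\{1,2,3,4\}$. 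The genuine work is therefore confined to the finitely many elements of $\A$ whose indices straddle the arc $\{g,1,2,3,4\}$: namely $\alpha_{g,1},\alpha_{3,4},\alpha_{4,5}$, $\alpha_{1,g-1},\alpha_{2,g},\alpha_{4,2},\alpha_{5,3},\alpha_{6,4}$, and similar ones. When $g=6$ we must additionally treat $\nu^-_{5,1}$, since indices are taken modulo $g$.

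For each such curve $\delta$, I would first look for an identity $\dt{\beta}^{\pm1}(\delta)=h(\delta')$. Here $h$ is a twist $\dt{\gamma_j}^{\pm1}$ or $y^{\pm1}$ whose behaviour on auxiliary curves is already controlled, and $\delta'$ is one of the auxiliary curves built earlier ($\alpha_{j,k}$, $\alpha^{\pm}_{i,i\pm2}$, $\mu^{\pm}_{i,i\pm2}$, $\kappa^{\pm}_{i,i+3}$, $\eta^{\pm}_{i,i\pm1}$, $\gamma'_j$). This is exactly the way Lemma \ref{lemma:ygammaiip1m2} and Lemma \ref{lemma:ypmmu57m} were handled. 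Such identities should exist because $\beta$ and the $\gamma_i$ are related by the chain/lantern-type configuration underlying Szepietowski's generating set. Where a direct identity of this kind is available, Corollary \ref{coro:dtgammaialphajk}, Lemma \ref{lemma:dtgammaialphapmiip2m2}, Lemma \ref{lemma:dtgammaikappaiip3} and Lemma \ref{lemma:dtgammaietai} place the image in $V(\Y^{13})$ immediately.

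For the remaining curves, I would argue by unique determination, as in Lemma \ref{lemma:gammai} and Lemma \ref{lemma:dtgammaiAp2}. Take a set consisting of $\{\alpha_k\}$ (minus the indices touched), of $\E$ with a few consecutive $\veps$'s removed and replaced by one longer $\veps_{j,k}$, and of one or two auxiliary curves such as $\mu^2_j$, $\gamma'_j$ or $\nu^\pm_{k,l}$ from $V(\Y^{\le 12})$. Then check from the polygon model that $\dt{\beta}^{\pm1}(\delta)$ is the unique curve disjoint from all of them. Since every ingredient lies in $V(\Y^{12})$ at worst, the image lies in $V(\Y^{13})$. The cases $g=6,7,8$ must be checked separately for $\nu^-_{5,7}$, mimicking the case split in Lemma \ref{lemma:ypmmu57m}.

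The main obstacle is the bookkeeping needed to verify these unique-determination configurations, and in particular to confirm that no second curve is disjoint from the chosen set. I would attempt this first for $\dt{\beta}(\alpha_{g,1})$ and $\dt{\beta}(\alpha_{4,5})$, since these meet $\beta$ essentially once. For these two, I expect the image to be expressible as $\dt{\gamma_j}$ of a curve already shown to be in $V(\Y^{11})$, which would avoid a fresh configuration.
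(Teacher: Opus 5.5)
There is a genuine gap. It starts with your picture of $\beta$, and beyond that the proposal never supplies the argument itself.

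\textbf{The picture of $\beta$ is wrong, and so is the case list.} $\beta$ is not a pushed copy of $\veps_{g,4}$. That curve is separating (it cuts off crosscaps $1,\dots,4$), so it meets every curve an even number of times. This already contradicts your later expectation that $\alpha_{g,1}$ and $\alpha_{4,5}$ meet $\beta$ once.

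In fact $\beta$ runs along $e_1,e_2,e_3$, whose endpoints are the crosscap points, and closes up through $b$. So it passes once through each of the crosscaps $1,2,3,4$. It is two-sided and non-separating, with class $c_1+c_2+c_3+c_4$ in $H_1(N;\mathbb{Z}/2)$, where $c_k=[\alpha_k]$; note that $[\alpha_{i,j}]=c_i$.

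Consequently the curves of $\Y$ meeting $\beta$ are:
\begin{itemize}
\item the eight curves $\alpha_{1,2},\alpha_{2,3},\alpha_{3,4},\alpha_{4,5},\alpha_{1,g-1},\alpha_{2,g},\alpha_{3,1},\alpha_{4,2}$, each meeting $\beta$ once;
\item $\alpha_{5,3}$ and $\alpha_{g,1}$, which meet $\beta$ an even, nonzero number of times, so $\alpha_{g,1}$ is \emph{not} a once-intersecting case;
\item $\nu^-_{5,1}$, when $g=6$.
\end{itemize}
Your list omits $\alpha_{1,2},\alpha_{2,3},\alpha_{3,1}$ and includes $\alpha_{6,4}$, which is disjoint from $\beta$. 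Any determining configuration drawn from your picture would be computing twists about the wrong curve.

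\textbf{The determinations themselves are deferred.} The lemma consists of a finite list of explicit unique determinations, and you postpone every one of them.

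Route (a) assumes identities $\tau_{\beta}^{\pm1}(\delta)=h(\delta')$ with $h\in\{\dt{\gamma_j}^{\pm1},y^{\pm1}\}$ and $\delta'$ one of the previously controlled auxiliary curves. The appeal to ``chain/lantern-type configurations'' does not produce such identities, and the paper uses none.

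For the eight once-intersecting curves, the paper introduces genuinely new auxiliary curves: $\theta_{i,j},\omega_{i,j,k}\in\Y^{7}$, $\sigma^{\pm}_{i,j},\lambda_{i,i\pm2}\in\Y^{8}$, and $\eta^{\pm}_i,\zeta^{\pm}_{i,j}\in\Y^{9}$. These are built from the $\alpha_k$, the $\gamma_k$, $\eta$, the $\nu^{\pm}_{k,k+2}$ and the $\mu^{-}_{i,i\pm2}$, among others. With them it determines each image directly in $\Y^{10}$, using a reflection of the model preserving $\beta$ to halve the cases. Route (b) amounts to ``find a determining set and check it'', which is the lemma itself.

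\textbf{The device for the hard curves is missing.} For $\alpha_{5,3}$, $\alpha_{g,1}$ and $\nu^-_{5,1}$, the paper never describes the (complicated) images. Instead it writes the \emph{original} curve as $\langle A\rangle$, with $A$ made of once-intersecting curves (images already in $\Y^{10}$) and curves disjoint from $\beta$ (which $\tau_\beta$ fixes). For example,
$\alpha_{5,3}=\langle\{\alpha_{2,3},\alpha_{3,1},\alpha_{2,g},\alpha_{1,g-1}\}\cup\{\alpha_{g,g-2},\dots,\alpha_{7,5}\}\cup\{\omega_{4,5,6}\}\rangle$,
with $\omega_{4,5,6}$ treated the same way. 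It then applies $\tau_{\beta}^{\pm1}(\langle A\rangle)=\langle\tau_{\beta}^{\pm1}(A)\rangle$.

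This layering is what gives the bounds:
\begin{itemize}
\item level $12$ for $\alpha_{5,3}$ and $\alpha_{g,1}$;
\item level $13$ for $\nu^-_{5,1}$ when $g=6$, after first transporting $\alpha_1,\dots,\alpha_4$ (level $11$) and $\nu^+_{3,6}$ (level $12$).
\end{itemize}
Without this transport step, or explicit configurations in its place, your proposal gives no argument that these images land in $\Y^{13}$.
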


Since $i(\delta, \beta) = 0$ implies that $\tau_{\beta}^{\pm 1}(\delta) = \delta$, to prove Lemma \ref{lemma:taubeta(Y)} we need only verify the curves in $\Y$ that intersect $\beta$. The set of such curves can be partitioned as follows:

$$C_{1} = \{\alpha_{1,2}, \alpha_{2,3}, \alpha_{3,4}, \alpha_{4,5}, \alpha_{1,g-1}, \alpha_{2,g}, \alpha_{3,1}, \alpha_{4,2}\},$$
$$C_{2} = \{\alpha_{5,3}, \alpha_{g,1}\},$$
$$C_{3} = \left\{\begin{array}{cl}
    \{\nu_{5,7}^{-}\} & \text{if $g = 6$,} \\
    \varnothing & \text{otherwise.}
\end{array}
\right.$$

Both the proof of Lemma \ref{lemma:taubeta(Y)} and this subsection is then partitioned as $\tau_{\beta}^{\pm 1}(C_1) \subset \Y^{10}$, $\tau_{\beta}^{\pm 1}(C_2) \subset \Y^{12}$ and $\tau_{\beta}^{\pm 1}(C_3) \subset \Y^{13}$.

\subsubsection{$\tau_{\beta}^{\pm 1}(C_1) \subset \Y^{10}$}\label{subsubsec:taubetaC1}

For this part of the proof, note first that for all $\delta \in C_1$ we have that $i(\delta, \beta) = 1$, and also note that both $\beta$ and the set $C_1$ are invariant under the reflection homeomorphism whose axis ``passes between'' the second and third genus (see Subsection \ref{subsec:model} and Figure \ref{Fig-reflection-betaC1}). As such, we need only prove that $\tau_{\beta}^{\pm 1} (\delta) \in \Y^{10}$ for half the curves (making sure that the curves used are either invariant under the same reflection or both the curves and their images are in the same rigid expansion). More specifically, we need only prove that $\tau_{\beta}^{\pm 1}(\{\alpha_{1,2}, \alpha_{2,3}, \alpha_{3,4}, \alpha_{4,5}\}) \subset \Y^{10}$.

\begin{figure}[ht]
  \centering
  \includegraphics[height=3.5cm]{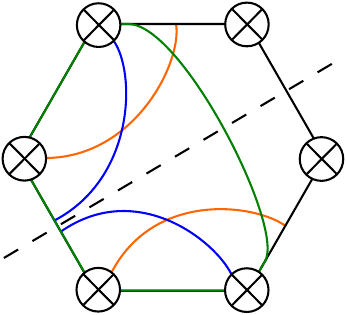}\caption{The curve $\beta$ in green, $\{\alpha_{1,2} , \, \alpha_{4,2}\}$ in blue and $\{\alpha_{3,4}, \, \alpha_{2,g}\}$ in orange.}\label{Fig-reflection-betaC1}
 \end{figure}

To do this, we introduce auxiliary curves that are used throughout this section.

Let $1 \leq i,j,k \leq g$ be such that $i$ and $j$ are not consecutive in the cyclic order in $\Z / g \Z$. See Table \ref{tab:auxcurvestaubeta} and Figure \ref{Fig-Def-aux-taubeta}.

\begin{table}[]
    $$\begin{array}{|c|c|c|}\hline
   \text{Auxiliary curve} & \text{Sets of curves whose union is needed} & k \text{th rigid expansion for the} \\
   & \text{to uniquely determine the auxiliary curve} & \text{auxiliary curve} \\ \hline
   \theta_{i,j}  & \{\alpha_{k}: i \neq k \neq j\}& 7 \\
    & \{\eta\} & \\\hline
   \sigma_{i,j}^{\pm} & \{\alpha_{k}: i \neq k \neq j\} & 8 \\
    & \{\alpha_{k,k+1}: i-1\neq k \neq i, \hspace{3mm} j-1 \neq k \neq j \} & \\
    & \{\nu_{i-1,i+1}^{\mp}, \nu_{j-1,j+1}^{\mp}\} & \\
    & \{\theta_{i,j}\} & \\ \hline
   \eta_{i}^{\pm} & \{\alpha_{i}\} & 9 \\
    & \{\gamma_{k}: i-1 \neq k \neq i\} & \\
    & \sigma_{i-1,i+1}^{\pm} & \\\hline
   \zeta_{i,j}^{\pm} & \{\alpha_{k}: k \notin \{i, \ldots, j\}\} & 9 \\
    & \{\gamma_{i}, \ldots, \gamma_{j-1}\} & \\
    & \{\gamma_{j+1}, \ldots, \gamma_{i-2}\} \text{ (if $i \neq j+2$)} & \\
    & \{\sigma_{i,j}^{\pm}\} \cup \{\veps_{i-1,j}\} & \\ \hline
   \lambda_{i,i\pm2} & \{\alpha_k : i \neq k \neq i \pm 1\} & 8 \\
    & \{\gamma_{i-1}\} \cup \{\gamma_{i+1}, \ldots, \gamma_{i-4}\} \text{ (if $-$)} & \\
    & \{\gamma_{i}\} \cup \{\gamma_{i+3}, \ldots, \gamma_{i-2}\} \text{ (if $+$)} & \\
    & \{\mu_{i, i \pm 2}^{-}\} &
    \\ \hline
\end{array}$$
    \caption{Auxiliary curves used in this section, with all subindices modulo $g$.}
    \label{tab:auxcurvestaubeta}
\end{table}

\begin{figure}[ht]
  \centering
  \includegraphics[height=3.5cm]{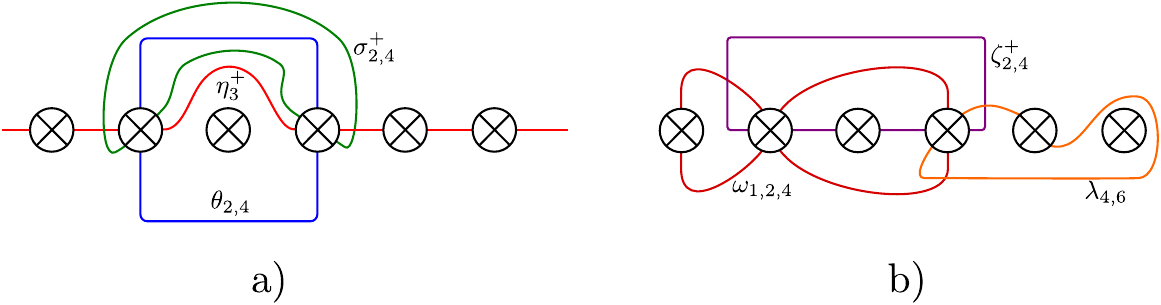}\caption{Examples of curves in Table \ref{tab:auxcurvestaubeta} and the curve $\omega_{1,2,4}$.}\label{Fig-Def-aux-taubeta}
 \end{figure}

On the other hand, let $1 \leq i , j , k \leq g$ be such that $i < j < k$ with respect to the cyclic order in $\Z / g \Z$. We define:

$$\omega_{i,j,k} = \langle \{\alpha_{l}: l \neq i,j,k\} \cup \{\alpha_{j,i-1}, \alpha_{j,k}\} \cup \{\eta\} \rangle \in \Y^{7}.$$

Now, to see that $\tau_{\beta}^{\pm 1} (C_{1}) \subset \Y^{10}$, see Table \ref{tab:taubetaC1}
\begin{table}[]
    $$\begin{array}{|c|c|c|} \hline
      \gamma  &  A \text{ for $\gamma = \langle A \rangle$}  & k \text{ for } \gamma \in \Y^{k}\\ \hline
      \tau_{\beta}(\alpha_{1,2})   & \{\alpha_{5}, \ldots , \alpha_{g}\} \cup \{\alpha_{1,2}\} & 10 \\
       & \{\gamma_{3}\} \cup \{\gamma_{5}, \ldots, \gamma_{g-1}\}\\
       & \{\eta_{3}^{+}, \eta_{4}^{-}\} \cup  \{\veps_{4,g}\} &\\ \hline
      \tau_{\beta}^{-1} (\alpha_{1,2}) & \{\alpha_{5}, \ldots, \alpha_{g}\} & 8 \\
       & \{\gamma_{3}\} \cup \{\eta_{2}^{+}\} \cup \{\zeta_{1,3}^{+}\} &  \\ \hline
      \tau_{\beta}(\alpha_{2,3}) & \{\alpha_{5}, \ldots, \alpha_{g}\} \cup \{\alpha_{2,3}\} & 10 \\
       & \{\theta_{1,4}\} \cup \{\omega_{1,2,3}\} \cup \{\eta_{1}^{-}\} \cup \{\veps_{4,g}\} \\ \hline
      \tau_{\beta}^{-1}(\alpha_{2,3}) & \{\alpha_{5}, \ldots, \alpha_{g}\} \cup \{\alpha_{2,3}\} & 10 \\
       & \{\theta_{1,4}\} \cup \{\omega_{1,2,3}\} \cup \{\eta_{4}^{-}\} \cup \{\veps_{4,g}\} & \\ \hline
      \tau_{\beta}(\alpha_{3,4}) & \{\alpha_{5}, \ldots, \alpha_{g}\} \cup \{\alpha_{3,4}\} & 10 \\
       & \{\gamma_{1}\} \cup \{\gamma_{5}, \ldots, \gamma_{g-1}\} & \\
       & \{\zeta_{2,4}^{+}, \zeta_{g,2}^{-}\} & \\ \hline
      \tau_{\beta}^{-1}(\alpha_{3,4}) & \{\alpha_5 , \ldots, \alpha_g\} & 10 \\
       & \{\gamma_{1}\} \cup \{\omega_{2,3,4}\} \cup \{\theta_{3,5}\} \cup \{\eta_{4}^{+}\} & \\ \hline
      \tau_{\beta}(\alpha_{4,5}) & \{\alpha_{4}, \ldots, \alpha_{g}\} & 10 \\
       & \{\gamma_{1}, \gamma_{2}, \gamma_{4}\} \cup \{\zeta_{2,4}^{+}, \zeta_{g,2}^{-}\} & \\
       & \{\gamma_{6}, \ldots, \gamma_{g-1}\} \text{ (if $g \geq 7$)} & \\ \hline
      \tau_{\beta}^{-1}(\alpha_{4,5}) & \{\alpha_{5}, \ldots, \alpha_{g}\} \cup \{\alpha_{4,5}, \alpha_{4,g}\} & 10 \\
       & \{\gamma_{1}, \gamma_{2}\} \cup \{\delta\} \cup \{\zeta_{g,2}^{-}\} & \\
       & \{\gamma_{6}, \ldots, \gamma_{g-1}\} \text{ (if $g \geq 7$)}
      \\ \hline 
    \end{array}$$
    \caption{$\tau_{\beta}^{\pm 1} (C_{1}) \subset \Y^{10}$.}
    \label{tab:taubetaC1}
\end{table}

Note that in Table \ref{tab:taubetaC1}, there is a curve $\delta$ in $\tau_{\beta}^{-1}(\alpha_{4,5})$; this curve $\delta$ is generated as follows:

$$\delta = \langle \{\alpha_{1}\} \cup \{\alpha_{5}, \ldots, \alpha_{g}\} \cup \{\alpha_{4,5}, \alpha_{4,1}\} \cup \{\gamma_{2}\} \cup \{\gamma_{6}, \ldots, \gamma_{g}\} \cup \{\lambda_{3,1}\} \rangle \in \Y^9$$

\begin{figure}[ht]
  \centering
  \includegraphics[height=3.5cm]{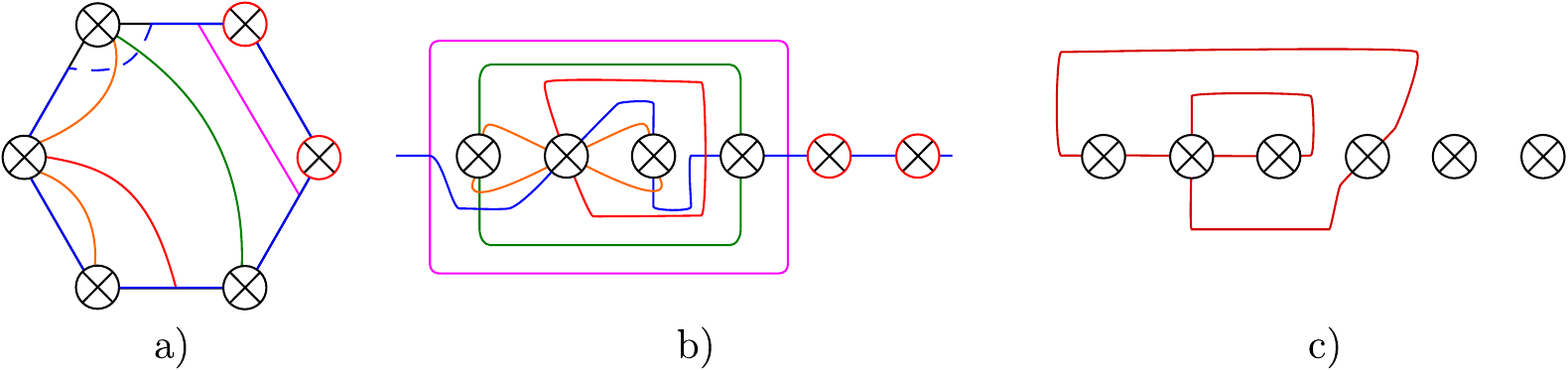}\caption{(a) and (b) depict the curves (from two angles) necessary to uniquely determine $\dt{\beta}(\alpha_{2,3})$), which is depicted in (c).}\label{fig:Def-dtbeta-alpha23}
 \end{figure}

\subsubsection{$\tau_{\beta}^{\pm 1}(C_2) \subset \Y^{12}$}\label{subsubsec:taubetaC2}

Similarly to the previous subsection, we need only to prove that $\tau_{\beta}^{\pm 1}(\alpha_{5,3}) \in \Y^{12}$, due to the same trick with the reflection homeomorphism with axis ``passing through''  the second and third genus.

Note that $\alpha_{5,3} = \langle A_{1} \cup A_{2} \cup A_{3} \rangle$, with $A_{1} = \{\alpha_{2,3}, \alpha_{3,1}, \alpha_{2,g}, \alpha_{1,g-1}\}$, $A_{2} = \{\alpha_{g, g-2}, \ldots, \alpha_{7,5}\} \subset \Y^3$, and $A_3 = \{\omega_{4,5,6}\}$. Then, we have that:
\begin{itemize}
    \item $A_{1} \subset C_1$, and
    \item every curve in $A_{2} \subset \Y^3$ is disjoint from $\beta$.
\end{itemize}

Thus, $\tau_{\beta}^{\pm 1} (\alpha_{5,3}) = \langle \tau_{\beta}^{\pm 1}(A_{1}) \cup \tau_{\beta}^{\pm 1}(A_{2}) \cup \tau_{\beta}^{\pm 1}(A_{3})\rangle$.

Given that $\tau_{\beta}^{\pm 1}(A_1) \subset \Y^{10}$ (due to the previous subsection) and that $\tau_{\beta}^{\pm 1}(A_2) = A_2 \subset \Y^3$, we need only prove that $\tau_{\beta}^{\pm 1}(\omega_{4,5,6}) \in \Y^{11}$.

Again, note that $\omega_{4,5,6} = \langle A_{1} \cup A_{2} \cup A_{3} \rangle$ with $A_{1} = \{\alpha_{2,3}, \alpha_{2,g}, \alpha_{3,1}\}$, $A_{2} = \{\eta\} \cup \{\alpha_{5,6}\} \subset \Y^3$, and either $A_{3}$ empty if $g = 6$, or $A_{3} = \{\alpha_{7}, \ldots, \alpha_{g}\} \subset \Y^6$ if $g \geq 7$.

Thus, $\tau_{\beta}^{\pm 1}(\omega_{4,5,6}) = \langle \tau_{\beta}^{\pm 1}(A_{1}) \cup \tau_{\beta}^{\pm 1}(A_{2}) \cup \tau_{\beta}^{\pm 1} (A_{3})\rangle$. Since $\tau_{\beta}^{\pm 1}(A_{1}) \subset \Y^{10}$ (due to the previous subsection) and $\tau_{\beta}^{\pm 1}(A_{2} \cup A_{3}) = A_2 \cup A_3 \subset \Y^6$, we have that $\tau_{\beta}^{\pm 1}(\omega_{4,5,6}) \in \Y^{11}$.

\subsubsection{$\tau_{\beta}^{\pm 1}(C_3) \subset \Y^{13}$}\label{subsubsec:taubetaC3}

If $g \geq 7$, $C_{3}$ is empty. Hence in this subsection we assume that $g = 6$, and thus $C_{3} = \{\nu_{5,7}^{-}\}$.

Since $\tau_{\beta}^{\pm 1}(\nu_{5,7}^{-})$ are as in Figure \ref{Fig-dtbeta-nu57} and the first author is deadly afraid of said curves, we employ the same strategy as in the previous subsection. For this, we first prove that $\tau_{\beta}^{\pm 1}(\{\alpha_{1}, \ldots, \alpha_{4}\}) \subset \Y^{11}$ and $\tau_{\beta}^{\pm 1}(\nu_{3,6}^{+}) \in \Y^{12}$.

\begin{figure}[ht]
  \centering
  \includegraphics[height=3.5cm]{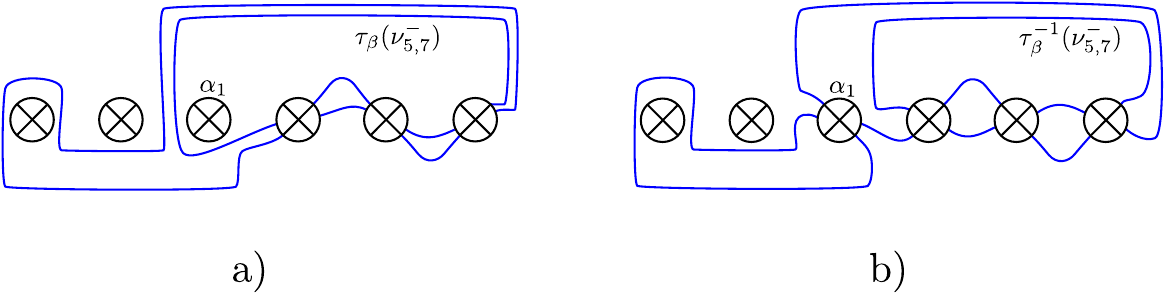}\caption{The curves $\tau_{\beta}(\nu_{5,7}^{-})$ and $\tau_{\beta}^{-1}(\nu_{5,7}^{-})$.}\label{Fig-dtbeta-nu57}
 \end{figure}

\begin{itemize}
    \item $\tau_{\beta}^{\pm 1}(\{\alpha_{1}, \ldots, \alpha_{4}\}) \subset \Y^{11}$: For each $1 \leq i \leq 4$, there exist sets $A_{1} \subset C_{1}$ and $A_{2}$ such that $\alpha_{i} = \langle A_{1} \cup A_{2} \rangle$ and every curve in $A_{2}$ is disjoint from $\beta$. See Table \ref{tab:alphaiA1A2}.
    \begin{table}[h]
        $$\begin{array}{|c|c|c|}\hline
            i & A_{1} & A_{2} \\ \hline
            1 & \alpha_{3,1}, \alpha_{2,3}, \alpha_{3,4}, \alpha_{4,5} & \alpha_{5,6}, \alpha_{6,4} \\ \hline
            2 & \alpha_{2,g-1}, \alpha_{4,2}, \alpha_{3,4} & \alpha_{5,6}, \alpha_{6,4}, \theta_{1,3} \\ \hline
            3 & \alpha_{1,2}, \alpha_{1,g-1}, \alpha_{2,g} & \alpha_{5,6}, \alpha_{6,4}, \theta_{2,3} \\ \hline
            4 & \alpha_{2,3}, \alpha_{1,g-1}, \alpha_{2,g}, \alpha_{3,1} & \alpha_{5,6}, \alpha_{6,4} 
            \\ \hline
        \end{array}$$
        \caption{Sets $A_{1}$ and $A_{2}$ needed for $\alpha_{i} = \langle A_{1} \cup A_{2} \rangle$, for Subsection \ref{subsubsec:taubetaC3}.}
        \label{tab:alphaiA1A2}
    \end{table}
    \item $\tau_{\beta}^{\pm 1}(\nu_{3,6}^{+}) \in \Y^{12}$: Let $\delta = \langle \{\alpha_{3}, \ldots, \alpha_{6}\} \cup \{\gamma_{1}\} \cup \{\eta_{5}^{+}\} \rangle \in \Y^{10}$; see Figure \ref{Fig-Def-dtbeta-nu36}. Then, $\nu_{3,6}^{+} = \langle A_{1} \cup A_{2} \rangle$ with $A_{1} = \{\alpha_{1}, \ldots, \alpha_{4}\} \cup \{\alpha_{4,5}\}$ and $A_{2} = \{\alpha_{5}, \alpha_{6}, \delta\}$.\\ This implies that $\tau_{\beta}^{\pm 1}(\nu_{3,6}^{+}) = \langle \tau_{\beta}^{\pm 1}(A_{1}) \cup \tau_{\beta}^{\pm 1}(A_{2})\rangle = \langle \tau_{\beta}^{\pm 1}(A_{1}) \cup A_{2} \rangle \in \Y^{12}$.
\end{itemize}

Now, we have that $\nu_{5,7}^{-} = \langle A_{1} \cup A_{2} \cup A_{3} \rangle$ with $A_{1} = \{\alpha_{3,1}, \alpha_{4,2}, \alpha_{2,3}, \alpha_{3,4}\} \subset C_1$, $A_{2} = \{\alpha_{1}, \nu_{3,6}^{+}\}$ and $A_{3} = \{\alpha_{5}, \alpha_{6}\}$. Thus, $\tau_{\beta}^{\pm 1}(\nu_{5,7}^{-}) = \langle \tau_{\beta}^{\pm 1}(A_1) \cup \tau_{\beta}^{\pm 1}(A_2) \cup \tau_{\beta}^{\pm 1}(A_3) \rangle = \langle \tau_{\beta}^{\pm 1}(A_{1}) \cup \tau_{\beta}^{\pm 1}(A_2) \cup A_3 \rangle$. By the previous subsection and the argument above, we have that $\tau_{\beta}^{\pm 1}(A_{1}) \subset \Y^{10}$ and $\tau_{\beta}^{\pm 1}(A_{2}) \subset \Y^{12}$. Therefore $\tau_{\beta}^{\pm 1}(\nu_{5,7}^{-}) \in \Y^{13}$.

\begin{figure}[ht]
  \centering
  \includegraphics[height=3.5cm]{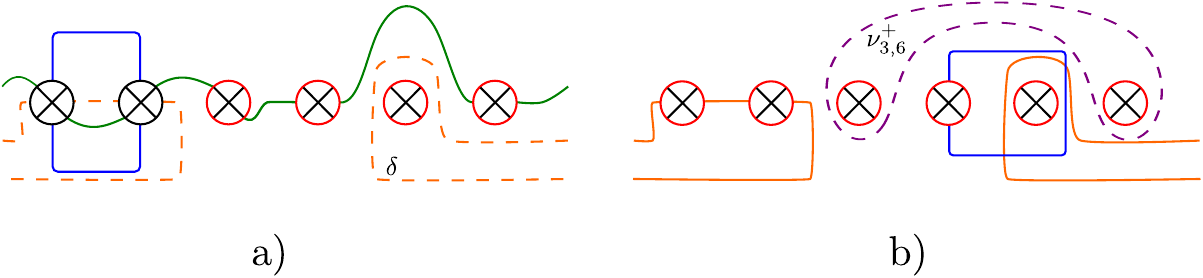}\caption{(a) depicts the curve $\delta$ (orange and dotted) and the curves needed to uniquely determine it; (b) depicts the curve $\nu_{3,6}^{+}$ (purple and dotted) and the curves needed to uniquely determine it.}\label{Fig-Def-dtbeta-nu36}
 \end{figure}


\section{Proof of \Cref{thm:exhaustionY}}\label{sec:thmexhaustion}

Given an essential curve $\gamma$ in a non-orientable surface $N$, let $N_{\gamma}$ be the surface obtained by cutting $N$ along $\gamma$.

\begin{proof}[Proof of \Cref{thm:exhaustionY}]
Let $N$ be a connected non-orientable surface with genus $g\geq 6$ and let $\gamma$ be an essential curve. 

Suppose $g$ is odd. If $\gamma$  is a one-sided curve, using the Euler characteristic, we have that $N_{\gamma}$ is an orientable surface of genus $\frac{g-1}{2}$ with one boundary component or a non-orientable surface of genus $g-1$ with one boundary component. If $\gamma$ is a two-sided non-separating curve, we have that $N_{\gamma}$ is non-orientable of genus $g-2$. This implies that under the action of $\mcg{N}$ in $\cg{N}$ we have two classes of one-sided curves with representatives $\alpha_{1,2} \in V(\Y)$ and $\eta \in V(\Y^{6})$ (see paragraph before Lemma \ref{lemma:dtgammaiAp2}) and we have one class of a two-sided non-separating curve with a representative $\gamma_{1}\in V(\Y^{5})$ (see Lemma \ref{lemma:gammai}).

Analogously, if $g$ is even, under the action of $\mcg{N}$ in $\cg{N}$ we have one class of one-sided curves with a representative  $\alpha_{1,2} \in V(\Y)$ and two classes of two-sided non-separating curves with representatives $\gamma_1 \in V(\Y^{5})$ and $\eta\in V(\Y^{6})$.
Then by Lemma \ref{lemma:groupYinY} and \Cref{sec:GenSetY} we have that for any non-separating curve $\gamma \in V(\cg{N})$ there exist $k\in\N$ such that $\gamma \in V(\Y^{k})$, in particular every non-separating curve is an element of $\bigcup_{i\in \N} \Y^{i}$.

Now let $\gamma$ be a separating curve, we have that every such a curve can be uniquely determined by a finite set $C$ of nonseparating curves (we can fill each component of $N_{\gamma}$ with nonseparating curves). We have that $C\subseteq V(\Y^{k})$ for some $k\in \N$, thus $\gamma\in V(\Y^{k+1})$. Therefore, $\cg{N} = \bigcup_{i\in \N}\Y^{i}$.
\end{proof}

Let $B_{1}$, $B_{2}$  and $\X$ the following sets:
\begin{align*}
B_1 \, = & \, \, \{ \alpha_{i} \mid 1\leq i \leq g \}\, \cup \, \{\alpha_{i,j} \mid 1\leq i,j\leq g, \,\, j\neq i, \,\, j \neq i-1   \}\\ 
& \,\, \cup \,  \{\varepsilon_{i,j} \mid  1\leq i,j\leq g, \,\, 2 \leq |i-j| \leq g-2  \},
\end{align*}
$$
B_{2}\, = \,\{\nu^{+}_{i,i+2},\, \nu^{-}_{i,i+2} \mid 1\leq i \leq g \,\, \text{ and } \, i+2 \text{ modulo } g  \},
$$
$$
\X = B_1\cup B_2.
$$
Irmak shows that $\X$ is a finite  rigid set with trivial pointwise stabilizer (see Lemma 3.6 in \cite{Irmak2019}).

\begin{proof}[Proof of Corollary \ref{cor:exhaustionX}]
 By construction $\Y\subseteq \X$, then  $\Y^{k} \subseteq \X^{k}$ for any  $k\in \N$. This implies that $\bigcup_{i\in \N}\Y^{i} = \bigcup_{i\in \N}\X^{i}$, then $\X$ is a finite rigid seed subgraph of $\cg{N}$.    
\end{proof}

\bibliographystyle{plain}
\bibliography{bibliography}

\begin{thebibliography}{10}

\bibitem{AL13}
Javier Aramayona and Christopher~J. Leininger.
\newblock Finite rigid sets in curve complexes.
\newblock {\em J. Topol. Anal.}, 5(2):183--203, 2013.

\bibitem{AL16}
Javier Aramayona and Christopher~J. Leininger.
\newblock Exhausting curve complexes by finite rigid sets.
\newblock {\em Pacific J. Math.}, 282(2):257--283, 2016.

\bibitem{Atalan-Korkmaz}
Ferihe Atalan and Mustafa Korkmaz.
\newblock Automorphisms of curve complexes on nonorientable surfaces.
\newblock {\em Groups Geom. Dyn.}, 8(1):39--68, 2014.

\bibitem{BM06}
Jason Behrstock and Dan Margalit.
\newblock Curve complexes and finite index subgroups of mapping class groups.
\newblock {\em Geom. Dedicata}, 118:71--85, 2006.

\bibitem{FarbMargalit}
Benson Farb and Dan Margalit.
\newblock {\em A primer on mapping class groups}, volume~49 of {\em Princeton
  Mathematical Series}.
\newblock Princeton University Press, Princeton, NJ, 2012.

\bibitem{Harvey81}
W.~J. Harvey.
\newblock Boundary structure of the modular group.
\newblock In {\em Riemann surfaces and related topics: {P}roceedings of the
  1978 {S}tony {B}rook {C}onference ({S}tate {U}niv. {N}ew {Y}ork, {S}tony
  {B}rook, {N}.{Y}., 1978)}, volume No. 97 of {\em Ann. of Math. Stud.}, pages
  245--251. Princeton Univ. Press, Princeton, NJ, 1981.

\bibitem{JHH2}
Jes\'us Hern\'andez~Hern\'andez.
\newblock Edge-preserving maps of curve graphs.
\newblock {\em Topology Appl.}, 246:83--105, 2018.

\bibitem{JHH1}
Jes\'{u}s Hern\'{a}ndez~Hern\'{a}ndez.
\newblock Exhaustion of the curve graph via rigid expansions.
\newblock {\em Glasg. Math. J.}, 61(1):195--230, 2019.

\bibitem{JHH3}
Jes\'us Hern\'andez~Hern\'andez.
\newblock Graph morphisms and exhaustion of curve graphs of low-genus surfaces.
\newblock {\em Proc. Edinb. Math. Soc. (2)}, 68(4):1018--1068, 2025.

\bibitem{Irmak2004}
Elmas Irmak.
\newblock Superinjective simplicial maps of complexes of curves and injective
  homomorphisms of subgroups of mapping class groups.
\newblock {\em Topology}, 43(3):513--541, 2004.

\bibitem{Irmak2012}
Elmas Irmak.
\newblock Superinjective simplicial maps of the complexes of curves on
  nonorientable surfaces.
\newblock {\em Turkish J. Math.}, 36(3):407--421, 2012.

\bibitem{Irmak2020}
Elmas Irmak.
\newblock Edge-preserving maps of the nonseparating curve graphs, curve graphs
  and rectangle preserving maps of the {H}atcher-{T}hurston graphs.
\newblock {\em J. Knot Theory Ramifications}, 29(11):2050078, 41, 2020.

\bibitem{Irmak2019}
Elmas Irmak.
\newblock Exhausting curve complexes by finite rigid sets on nonorientable
  surfaces.
\newblock {\em J. Topol. Anal.}, 16(2):261--289, 2024.

\bibitem{Ivanov}
Nikolai~V. Ivanov.
\newblock Automorphism of complexes of curves and of {T}eichm\"{u}ller spaces.
\newblock {\em Internat. Math. Res. Notices}, (14):651--666, 1997.

\bibitem{Korkmaz}
Mustafa Korkmaz.
\newblock Automorphisms of complexes of curves on punctured spheres and on
  punctured tori.
\newblock {\em Topology Appl.}, 95(2):85--111, 1999.

\bibitem{Lickorish}
W.~B.~R. Lickorish.
\newblock Homeomorphisms of non-orientable two-manifolds.
\newblock {\em Proc. Cambridge Philos. Soc.}, 59:307--317, 1963.

\bibitem{Luo}
Feng Luo.
\newblock Automorphisms of the complex of curves.
\newblock {\em Topology}, 39(2):283--298, 2000.

\bibitem{Shackleton2007}
Kenneth~J. Shackleton.
\newblock Combinatorial rigidity in curve complexes and mapping class groups.
\newblock {\em Pacific J. Math.}, 230(1):217--232, 2007.

\bibitem{SS25}
{Micha\l} Stukow and {B\l}a\.{z}ej Szepietowski.
\newblock Automorphisms of the pants graph of a nonorientable surface.
\newblock Preprint at
  \href{https://doi.org/10.48550/arXiv.2507.12613}{arXiv:2507.12613 [math.GT]}.

\bibitem{Szepietowski20}
B\l~a\.zej Szepietowski.
\newblock A note on the curve complex of the 3-holed projective plane.
\newblock {\em Math. Commun.}, 25(2):289--296, 2020.

\bibitem{Szepietowski}
{B\l}a\.{z}ej Szepietowski.
\newblock The mapping class group of a nonorientable surface is generated by
  three elements and by four involutions.
\newblock {\em Geom. Dedicata}, 117:1--9, 2006.

\end{thebibliography}
\end{document}